\def\Z{{\mathbb Z}} 
\def\Zl{{\mathbb{Z}_\ell}}
\def\Q{{\mathbb Q}} 
\def\C{{\mathbb C}}
\def\cC{{\mathcal C}}
\def\cG{{\mathcal{G}}}
\def\g{{\mathfrak{g}}}
\def\M{{\mathcal M}}
\def\p{{\mathfrak{p}}}
\def\U{{\mathcal U}}
\def\u{{\mathfrak{u}}}
\def\X{{\mathcal X}}
\def\Z{{\mathbb{Z}}}
\def\G{{\Gamma}}
\def\Ql{{\Q_\ell}}
\def\ab{\mathrm{ab}}
\def\an{\mathrm{an}}
\def\cts{\mathrm{cts}}
\def\prol{{(\ell)}}
\def\et{\mathrm{\acute{e}t}}
\def\diag{\mathrm{diag}}
\def\rel{{\mathrm{rel}(\ell)}}
\def\top{\mathrm{top}}
\def\orb{\mathrm{orb}}
\def\un{\mathrm{un}}
\newcommand{\Gr}{\operatorname{Gr}}
\newcommand{\Hom}{\operatorname{Hom}}
\newcommand{\Lie}{\operatorname{Lie}}
\newcommand\im{\operatorname{im}} 
\newcommand\Aut{\operatorname{Aut}}
\newcommand\Spec{\operatorname{Spec}}
\newcommand\Sp{\operatorname{Sp}} 
\newtheorem{theorem}{Theorem}[section]
\newtheorem{lemma}[theorem]{Lemma}
\newtheorem{proposition}[theorem]{Proposition}
\newtheorem{bigtheorem}{Theorem}
\newtheorem{bigcorollary}[bigtheorem]{Corollary}
\theoremstyle{definition}
\newtheorem{definition}[theorem]{Definition}
\theoremstyle{remark}
\newtheorem{remark}[theorem]{Remark}
\begin{document}

\title[Unordered Universal Curves in Characteristic $p$]
{On the universal curve with unordered marked points in positive characteristic}
\author{
Ma Luo \and Tatsunari Watanabe
}
\address{School of Mathematical Sciences,  Key Laboratory of MEA (Ministry of Education), Shanghai Key Laboratory of PMMP,  East China Normal University, Shanghai}
\email{mluo@math.ecnu.edu.cn}
\address{Mathematics Department, Embry-Riddle Aeronautical University, Prescott}
\email{watanabt@erau.edu}

\thanks{The first author is supported in part by National Natural Science Foundation of China (No. 12201217), by Shanghai Pilot Program for Basic Research, and by Science and Technology Commission of Shanghai Municipality (No. 22DZ2229014).}

\begin{abstract}
\smallskip
We study the relative pro-$\ell$ and continuous relative completions of the algebraic fundamental groups of universal curves over the moduli stack of curves with unordered marked points in positive characteristic. Using specialization and homotopy exact sequences, we compare the ordered and unordered settings and prove that the natural projection from the relative completion of the universal curve over the unordered moduli stack admits no section in positive characteristic. This yields a non-splitting result for the corresponding projection on algebraic fundamental groups. The present paper is a sequel to our earlier work in characteristic zero.
\end{abstract}
\maketitle

\setcounter{tocdepth}{1}
\tableofcontents
\section{Introduction}

Let $k$ be a field of characteristic $p >0$ and assume that $2g - 2 + n > 0$.
Denote by $\M_{g,n/k}$ the moduli stack of smooth proper curves of genus $g$
equipped with $n$ ordered, distinct marked points over $k$, and let
\[
\cC_{g,n/k} \longrightarrow \M_{g,n/k}
\]
be the universal curve.
The symmetric group $S_n$ acts freely (in the stack-theoretic sense) by permuting the marked
points, which induces quotient stacks
\[
\cC_{g,[n]/k} := [\cC_{g,n/k}/S_n],
\qquad
\M_{g,[n]/k} := [\M_{g,n/k}/S_n].
\]
We call the induced morphism
\[
\pi_{g,[n]/k}: \cC_{g,[n]/k} \longrightarrow \M_{g,[n]/k}
\]
the universal curve of genus $g$ with $n$ \emph{unordered} marked points over~$k$.

Fix an algebraic closure $\bar k$ of $k$.
\textcolor{black}{Let $\bar\xi$ be a geometric point of $\M_{g,[n]/\bar k}$, and
let $C$ denote the corresponding geometric fiber of
$\pi_{g,[n]/\bar k}$.}
Choose a geometric point $\bar{y}$ of $C$.
Then $\pi_{g,[n]/\bar k}$ induces the exact sequence of profinite groups
\begin{equation}\label{main homot seq in p}
1 \longrightarrow \pi_1(C,\bar{y})^{\prol}
\longrightarrow
\pi_1'\!\bigl(\cC_{g,[n]/\bar k},\bar{y}\bigr)
\longrightarrow
\pi_1\!\bigl(\M_{g,[n]/\bar k},\bar\xi\bigr)
\longrightarrow 1.
\end{equation}
Here $\pi_1(C,\bar{y})^{\prol}$ denotes the pro-$\ell$ completion of
$\pi_1(C,\bar{y})$,
and $\pi_1'\!\bigl(\cC_{g,[n]/\bar k},\bar{y}\bigr)$ is a canonical quotient of
$\pi_1\!\bigl(\cC_{g,[n]/\bar k},\bar{y}\bigr)$ obtained by the SGA~1 construction
(cf.~\cite[Exp.~XIII,~\S 4]{SGA1}).
\textcolor{black}{We briefly review this construction in
\S\ref{hom seq in p}.}

Passing to continuous relative completions with respect to the $\ell$-adic
monodromy representation
\[
\pi_1\!\bigl(\cC_{g,[n]/\bar k},\bar{y}\bigr)
\longrightarrow
\pi_1\!\bigl(\M_{g,[n]/\bar k},\bar\xi\bigr)
\longrightarrow
\Sp(H_\Ql),
\]
where $\ell \neq \operatorname{char}(k)$ and
\textcolor{black}{$H_\Ql := H^1_{\et}(C, \Ql(1))$,}
one obtains an exact sequence of pro-algebraic $\Q_\ell$--groups
\textcolor{black}{(see \S\ref{cont rel comp})}
\begin{equation}\label{main exact seq in p}
1 \longrightarrow F^\un_g
\longrightarrow
\cG_{\cC_{g,[n]}, \bar\xi}
\longrightarrow
\cG_{g,[n], \bar\xi}
\longrightarrow 1.
\end{equation}
\textcolor{black}{Here $F^\un_g$ is the $\ell$-adic unipotent completion of
$\pi_1(C, \bar y)$.}

\begin{bigtheorem}\label{thm:non-split-positive-char}
Suppose that $\operatorname{char}(k) =p > 0$, $\ell\not=p$ an odd prime, and
$g \geq 3$. Then the exact sequence \eqref{main exact seq in p} does not admit
any section.
\end{bigtheorem}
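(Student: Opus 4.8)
The plan is to argue by contradiction and to transport a hypothetical section from characteristic $p$ back to characteristic zero, where the absence of a section is known from the prequel. So suppose that \eqref{main exact seq in p} admits a section $s\colon \cG_{g,[n],\bar\xi}\to\cG_{\cC_{g,[n]},\bar\xi}$. Since $\M_{g,[n]}$, its universal curve $\cC_{g,[n]}$, the free $S_n$-action and the projection $\pi_{g,[n]}$ are all defined over $\Z$, I would base-change everything to $\Spec W$ for a complete discrete valuation ring $W$ with residue field $\bar k$ and fraction field of characteristic zero. Writing $\bar s$ and $\bar\eta$ for a geometric special and a geometric generic point, the specialization theory of SGA~1 (Exp.~XIII) furnishes specialization homomorphisms on \'etale fundamental groups that become isomorphisms on maximal prime-to-$p$ quotients, compatibly for $\M_{g,[n]}$, for $\cC_{g,[n]}$, for the projection, and for the presentation $[\M_{g,n}/S_n]$. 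As $\ell\neq p$ and the monodromy factors through $\Sp(H_\Ql)$, with the symplectic local system $H_\Ql=H^1_{\et}(C,\Ql(1))$ itself obtained by specialization (preserving the cup-product pairing and the Tate twist), the two $\ell$-adic monodromy representations correspond under these isomorphisms.

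Next I would pass to relative completions. Because $\ell\neq p$, the continuous relative completion factors through the maximal prime-to-$p$ quotient of $\pi_1$ and is a functor of the resulting pair together with its monodromy representation (cf.~\S\ref{cont rel comp}); it therefore does not distinguish $\bar s$ from $\bar\eta$ once the prime-to-$p$ quotients and the monodromy have been matched as above. Applying this functor to the base, the total space and the fiber yields isomorphisms of pro-algebraic $\Q_\ell$-groups
\[
(F^\un_g)^{\bar s}\xrightarrow{\ \sim\ }(F^\un_g)^{\bar\eta},\qquad
\cG_{\cC_{g,[n]},\bar\xi}^{\,\bar s}\xrightarrow{\ \sim\ }\cG_{\cC_{g,[n]},\bar\xi}^{\,\bar\eta},\qquad
\cG_{g,[n],\bar\xi}^{\,\bar s}\xrightarrow{\ \sim\ }\cG_{g,[n],\bar\xi}^{\,\bar\eta},
\]
which I would assemble into an isomorphism between the short exact sequence \eqref{main exact seq in p} and its characteristic-zero analogue over $\bar\eta$. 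Transporting $s$ across this isomorphism then produces a section of the characteristic-zero sequence over $\bar\eta$.

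Finally I would invoke the comparison between the unordered and ordered settings. Realizing $\M_{g,[n]}=[\M_{g,n}/S_n]$ and using the induced $S_n$-extensions of fundamental groups and of relative completions, a section of the unordered sequence over $\bar\eta$ restricts to a section of the corresponding ordered sequence over $\bar\eta$. This contradicts the non-splitting theorem of our characteristic-zero prequel, and the contradiction proves the theorem.

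The hard part will be the passage to relative completions: one must show that the prime-to-$p$ specialization isomorphisms induce an isomorphism of the \emph{extensions} in \eqref{main exact seq in p}, not merely of the three groups separately. Concretely, I expect the work to lie in transporting base points and the symplectic local system coherently, in checking that the weight data defining the prounipotent radicals match under specialization, and in verifying that the resulting morphism of extensions is an isomorphism on the fiber $F^\un_g$ as well as on the base and total completions, so that a section genuinely corresponds to a section. Once this extension-level compatibility is established, the descent to the known characteristic-zero ordered result is formal.
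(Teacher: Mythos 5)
Your strategy founders on the step you defer as ``the hard part,'' and the obstruction there is not bookkeeping but a comparison that is genuinely unavailable. The specialization theory of SGA~1, Exp.~XIII gives isomorphisms on pro-$\ell$ (prime-to-$p$) fundamental groups for \emph{proper} smooth morphisms; it applies to the fiber $C$, but $\M_{g,[n]/W}$ and $\cC_{g,[n]/W}$ are not proper over $W$, so there is no formal specialization isomorphism on prime-to-$p$ quotients of their fundamental groups. Even in the ordered case, the comparison between characteristic $p$ and characteristic $0$ is not an SGA~1 consequence: it holds only after \emph{relative pro-$\ell$ completion} and is a substantive theorem of \cite{wat_rational_points_inp} (Thm.~5.6), packaged here as Proposition~\ref{prop:specialization-rel-prol}. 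In the unordered case no such comparison is known at all --- the paper states this explicitly before Lemma~\ref{lem:unordered-h1-Sp-invariants} --- so the isomorphism of extensions you need in order to transport $s$ across characteristics cannot be invoked; proving it would be a new result, not a verification. Two further inaccuracies: (i) continuous relative completion does not factor through the maximal prime-to-$p$ quotient of $\pi_1$, since finite quotients of $\Sp(H_{\Z_\ell})$ can have order divisible by $p$ (the correct statement is factorization through the relative pro-$\ell$ completion, Proposition~\ref{prop:compatibility-relative-completions}); (ii) your final step, deriving a contradiction from the induced \emph{ordered} section in characteristic $0$, fails as stated, because the ordered sequence does split for $n\ge 1$ (the tautological sections); the prequel's non-splitting theorem concerns the unordered sequence, and its proof needs the $S_n$-equivariance of the induced ordered section, not its mere existence.

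The paper's proof is engineered precisely to avoid the missing unordered comparison, and its order of operations is the reverse of yours. Since diagram~\eqref{eq:ordered-unordered-extensions} is a pullback, a hypothetical section $s$ of \eqref{main exact seq in p} first lifts to a section $t$ of the \emph{ordered} sequence, still in characteristic $p$; only then does one cross characteristics, using the ordered comparison (Proposition~\ref{prop:basechange-rel-completions}). In characteristic $0$, \cite[Prop.~6.4]{LW25} classifies such sections: on abelianizations, $dt^{\ab}$ restricted to $H_{\Q_\ell}^{\oplus n}$ is projection to a single coordinate, $(x_1,\dots,x_n)\mapsto x_i$. Such a map cannot be compatible with passage to $S_n$-coinvariants when $n\ge 2$, and the compatibility is forced by Lemma~\ref{lem:homology-fiber-comparison}, Lemma~\ref{lem:ordered-unordered-comparison}, and Remark~\ref{Sn decomp for ugn} because $t$ came from the unordered section $s$; this is the contradiction. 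In short: where you try to move the unordered data to characteristic $0$ and then compare with the ordered setting, the paper compares with the ordered setting first (where the cross-characteristic bridge exists) and lets the $S_n$-symmetry do the killing.
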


\begin{bigcorollary}\label{cor:no-section}
If $g \ge 3$, the canonical surjection
\[
\pi_1\!\bigl(\cC_{g,[n]/k},\bar y\bigr)
\longrightarrow
\pi_1\!\bigl(\M_{g,[n]/k},\bar\xi\bigr)
\]
admits no continuous section. In particular, the universal curve
\[
\cC_{g,[n]/k} \longrightarrow \M_{g,[n]/k}
\]
has no section.
\end{bigcorollary}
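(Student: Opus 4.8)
The plan is to deduce Corollary~\ref{cor:no-section} from Theorem~\ref{thm:non-split-positive-char} by transporting a hypothetical section along the chain
\[
\{\text{section of }\cC_{g,[n]/k}\}\Rightarrow\{\pi_1\text{-section}/k\}\Rightarrow\{\pi_1\text{-section}/\bar k\}\Rightarrow\{\text{splitting of the completion sequence}\},
\]
and then invoking the Theorem to exclude the last object. Since every arrow is considerably softer than the Theorem itself, I expect the argument to be essentially formal; the only genuine care is needed in the base-point bookkeeping and in checking that each functor in the chain transports sections and respects the monodromy data.

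First I would handle the leftmost arrow. A section $\sigma\colon \M_{g,[n]/k}\to\cC_{g,[n]/k}$ of $\pi_{g,[n]/k}$ carries the base point $\bar\xi$ to a geometric point $\sigma(\bar\xi)$ of $\cC_{g,[n]/k}$ lying in the fiber $C$ over $\bar\xi$, and induces by functoriality a continuous homomorphism $\sigma_*\colon\pi_1(\M_{g,[n]/k},\bar\xi)\to\pi_1(\cC_{g,[n]/k},\sigma(\bar\xi))$ with $(\pi_{g,[n]/k})_*\circ\sigma_*=\id$. Since $\sigma(\bar\xi)$ and $\bar y$ both lie on the connected fiber $C$, an \'etale path between them identifies $\pi_1(\cC_{g,[n]/k},\sigma(\bar\xi))$ with $\pi_1(\cC_{g,[n]/k},\bar y)$ and converts $\sigma_*$ into a continuous section of the canonical surjection based at $\bar y$. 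Thus it suffices to rule out such a $\pi_1$-section, which also yields the ``in particular'' clause.

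Next I would descend a putative $\pi_1$-section $s$ over $k$ to the geometric situation over $\bar k$. Both $\pi_1(\cC_{g,[n]/k},\bar y)$ and $\pi_1(\M_{g,[n]/k},\bar\xi)$ fit into short exact sequences with quotient $\operatorname{Gal}(\bar k/k)$ and geometric kernels $\pi_1(\cC_{g,[n]/\bar k},\bar y)$ and $\pi_1(\M_{g,[n]/\bar k},\bar\xi)$; as $s$ is a section of a morphism over $\Spec k$ it commutes with the two projections to $\operatorname{Gal}(\bar k/k)$, hence maps the geometric kernel on the base into that on the total space and restricts to a section $s_{\bar k}$ of the geometric surjection. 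Composing $s_{\bar k}$ with the canonical quotient $\pi_1(\cC_{g,[n]/\bar k},\bar y)\twoheadrightarrow\pi_1'(\cC_{g,[n]/\bar k},\bar y)$ appearing in \eqref{main homot seq in p} produces a section of $\pi_1'(\cC_{g,[n]/\bar k},\bar y)\to\pi_1(\M_{g,[n]/\bar k},\bar\xi)$.

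Finally I would push this section through the continuous relative completion functor. The monodromy representation of $\pi_1'(\cC_{g,[n]/\bar k},\bar y)$ to $\Sp(H_\Ql)$ factors through $\pi_1(\M_{g,[n]/\bar k},\bar\xi)$, so post-composing $s_{\bar k}$ with it recovers the monodromy of the base (because $s_{\bar k}$ splits the projection); the section is therefore compatible with the representations defining the two relative completions. Functoriality of relative completion then supplies a morphism of pro-algebraic $\Q_\ell$-groups $\cG_{g,[n],\bar\xi}\to\cG_{\cC_{g,[n]},\bar\xi}$ splitting \eqref{main exact seq in p}, contradicting Theorem~\ref{thm:non-split-positive-char}. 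The step I expect to be the real pressure point is this last one: one must confirm that the relative completion functor genuinely sends a monodromy-compatible continuous section to an algebraic section, and that passage to the SGA~1 quotient $\pi_1'$ has not destroyed the splitting. Granting these, the absence of a splitting of \eqref{main exact seq in p} propagates back up the chain and rules out a section of the universal curve.
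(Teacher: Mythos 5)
Your proposal is correct and follows essentially the same route as the paper's proof: transport a hypothetical section from the stack level to the geometric fundamental groups over $\bar k$, then via the universal property of continuous relative completion to a splitting of \eqref{main exact seq in p}, which contradicts Theorem~\ref{thm:non-split-positive-char}. Your extra care with base points, the Galois-equivariance argument for descending from $k$ to $\bar k$, and the passage through $\pi_1'$ are correct elaborations of steps the paper treats as immediate.
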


\vspace{0.5em}

This paper is a sequel to~\cite{LW25}, where the corresponding nonsplitting
results for universal curves with unordered marked points were established in
characteristic~$0$. Our aim here is to extend those results to positive
characteristic. Although the overall strategy follows the same general
framework as in the characteristic~$0$ case, carrying out this extension
requires substantial additional work.
\textcolor{black}{In characteristic~$0$, the essential geometric information
needed for the argument is encoded in the weight filtration on relative and
continuous relative completions.}
In positive characteristic, while continuous relative completion remains
available for algebraic fundamental groups, transferring the corresponding
structural properties across characteristics is far from automatic.
\textcolor{black}{To address this, we make systematic use of relative pro-$\ell$
completion and continuous relative completion, together with comparison
isomorphisms between characteristic~$0$ and characteristic~$p$ established
in~\cite{wat_rational_points_inp}, which allow us to transport the necessary
monodromy and homotopy-theoretic information into characteristic~$p$.}
These tools play a central role throughout the paper and are indispensable for
the arguments that follow. Familiarity with the methods developed
in~\cite{LW25} will therefore be beneficial to the reader.
\section{Moduli of Curves with unordered marked points}\label{modui of curves}

Let $k$ be a field of characteristic $p>0$, and fix an algebraic closure
$\bar{k} \subset \Omega$, where $\Omega$ is a chosen algebraically closed field
containing~$k$.
\textcolor{black}{Assume throughout that $g,n \ge 0$ and $2g-2+n>0$.}

Over $\Spec \Z$, let $\M_{g,n/\Z}$ denote the moduli stack of smooth, proper
curves of genus~$g$ with $n$ ordered distinct marked points.
Its base change
\[
  \M_{g,n/k} := \M_{g,n/\Z} \times_{\Spec \Z} \Spec k
\]
is a smooth Deligne--Mumford stack over~$k$.

The symmetric group $S_n$ defines a constant finite \'etale group scheme over $\Spec \Z$.
By base change,
\[
  S_{n/k} := S_n \times_{\Spec \Z} \Spec k
\]
is a constant finite \'etale group scheme over $k$ acting on $\M_{g,n/k}$ by permuting the marked points.
Since $\M_{g,n/k}$ is a smooth Deligne--Mumford stack, the quotient stack
\[
  \M_{g,[n]/k} := [\,\M_{g,n/k} / S_{n/k}\,]
\]
is also a smooth Deligne--Mumford stack over~$k$.
It parametrizes smooth, proper genus~$g$ curves with $n$
\emph{unordered} marked points.

The universal curve over $\M_{g,n/\Z}$ base changes to the universal curve over
$k$:
\[
  \pi_{g,n/k} : \cC_{g,n/k} \longrightarrow \M_{g,n/k}.
\]
The $S_{n/k}$-action on $\M_{g,n/k}$ lifts to $\cC_{g,n/k}$, and the resulting
quotient stack
\[
  \cC_{g,[n]/k} := [\,\cC_{g,n/k} / S_{n/k}\,]
\]
is a Deligne--Mumford stack with induced morphism
\[
  \pi_{g,[n]/k} : \cC_{g,[n]/k} \longrightarrow \M_{g,[n]/k},
\]
which we refer to as the universal curve of genus~$g$ with $n$
\emph{unordered} marked points over~$k$.
Both projections
\[
  \M_{g,n/k} \longrightarrow \M_{g,[n]/k}
  \qquad\text{and}\qquad
  \cC_{g,n/k} \longrightarrow \cC_{g,[n]/k}
\]
are finite étale $S_{n/k}$-torsors.

\medskip

Finally, for any connected Deligne--Mumford stack $\X$ over a field $L$, we write
$\pi_1(\X_{/L})$ for its (profinite) algebraic fundamental group in the sense of
SGA~1. When working over $\C$, we instead use the notation $\pi_1^\top$ and
$\pi_1^{\orb}$ for the topological and orbifold fundamental groups,
respectively.
\subsection{Homotopy exact sequences in positive characteristic}\label{hom seq in p}

In this subsection, we briefly recall the SGA~1 construction of the homotopy
exact sequence for a proper smooth morphism with geometrically connected
fibers, and then apply it to the ordered and unordered universal curves
(cf.~\cite[SGA~1, Exp.~XIII, §4]{SGA1}; see also
\cite[Prop.~4.2]{wat_rational_points_inp}).

Let $L$ be a field, and let $T$ be a connected, locally noetherian
Deligne--Mumford stack over $L$.
Let $f : C \longrightarrow T$
be a proper smooth morphism whose geometric fibers are connected curves of
genus $g\ge 2$.
Fix geometric points
  $\bar\zeta$ in $T$ and 
  $\bar{a}$ in the fiber $C_{\bar\zeta}$ of $f$ over $\bar\zeta$.
Let $\ell$ be a prime number different from
$\operatorname{char}(k)$.
Following
\cite[SGA~1, Exp.~XIII, §4.3--4.4]{SGA1}, set
\[
  K := \ker\!\bigl(\pi_1(C,\bar{a}) \longrightarrow \pi_1(T,\bar\zeta)\bigr),
  \qquad
  K^\prol:= \text{maximal pro-}\ell\text{ quotient of } K,
\]
and let
\[
  N := \ker\!\bigl(K \longrightarrow K^\prol\bigr).
\]
Then $N$ is a distinguished closed normal subgroup of $\pi_1(C,\bar{a})$, and we
define
\[
  \pi_1'(C,\bar{a}) := \pi_1(C,\bar{a}) / N.
\]
We also write $\pi_1(C_{\bar\zeta},\bar{a})^\prol$ for the maximal
pro-$\ell$ quotient of $\pi_1(C_{\bar\zeta},\bar{a})$.
With this notation,
\cite[Prop.~4.2]{wat_rational_points_inp} yields the exact sequence
\begin{equation}\label{eq:prime-to-L-exact}
  1 \longrightarrow \pi_1(C_{\bar\zeta},\bar{a})^\prol
    \longrightarrow \pi_1'(C,\bar{a})
    \longrightarrow \pi_1(T,\bar\zeta)
    \longrightarrow 1.
\end{equation}

\medskip
Assume $g \ge 2$, and let $L$ be a field of characteristic $p \ge 0$.
Consider the ordered and unordered universal curves
\[
\pi_{g,n/L} : \cC_{g,n/L} \longrightarrow \M_{g,n/L},
\qquad
\pi_{g,[n]/L} : \cC_{g,[n]/L} \longrightarrow \M_{g,[n]/L}.
\]
Fix a geometric point $\bar{\zeta}$ of $\M_{g,n/L}$, which we also view as a
geometric point of $\M_{g,[n]/L}$ via the finite \'etale projection
$\M_{g,n/L} \to \M_{g,[n]/L}$. Let $C_{\bar{\zeta}}$ denote the corresponding
geometric fiber, and choose a geometric point $\bar{a} \in C_{\bar{\zeta}}$.
Via the natural morphisms
\[
C_{\bar{\zeta}} \longrightarrow \cC_{g,n/L}
\longrightarrow \cC_{g,[n]/L},
\]
we regard $\bar{a}$ as a geometric point of both $\cC_{g,n/L}$ and
$\cC_{g,[n]/L}$.

Applying the SGA~1 construction \eqref{eq:prime-to-L-exact} to
$\pi_{g,n/L}$ yields an exact sequence
\begin{equation}\label{eq:ordered-prol-exact}
  1 \longrightarrow \pi_1(C_{\bar{\zeta}},\bar{a})^{(\ell)}
    \longrightarrow \pi_1'\bigl(\cC_{g,n/L},\bar{a}\bigr)
    \longrightarrow \pi_1\bigl(\M_{g,n/L},\bar{\zeta}\bigr)
    \longrightarrow 1.
\end{equation}
Likewise, applying the same construction to the unordered universal curve
$\pi_{g,[n]/L}$ gives an exact sequence
\begin{equation}\label{eq:unordered-prol-exact}
  1 \longrightarrow \pi_1(C_{\bar{\zeta}},\bar{a})^{(\ell)}
    \longrightarrow \pi_1'\bigl(\cC_{g,[n]/L},\bar{a}\bigr)
    \longrightarrow \pi_1\bigl(\M_{g,[n]/L},\bar{\zeta}\bigr)
    \longrightarrow 1.
\end{equation}

Passing from ordered to unordered moduli is induced by the finite \'etale
$S_n$-torsors
$\M_{g,n/L}\to \M_{g,[n]/L}$ and $\cC_{g,n/L}\to \cC_{g,[n]/L}$.
These morphisms induce maps between the exact sequences
\eqref{eq:ordered-prol-exact} and \eqref{eq:unordered-prol-exact}, summarized in
the following commutative diagram:
\begin{equation}\label{diag:ordered-unordered}
\begin{tikzcd}
1 \ar[r] &
\pi_1(C_{\bar{\zeta}},\bar{a})^{(\ell)} \ar[r] \ar[d, equal] &
\pi_1'\bigl(\cC_{g,n/L},\bar{a}\bigr) \ar[r] \ar[d] &
\pi_1\bigl(\M_{g,n/L},\bar{\zeta}\bigr) \ar[r] \ar[d] &
1 \\
1 \ar[r] &
\pi_1(C_{\bar{\zeta}},\bar{a})^{(\ell)} \ar[r] &
\pi_1'\bigl(\cC_{g,[n]/L},\bar{a}\bigr) \ar[r] &
\pi_1\bigl(\M_{g,[n]/L},\bar{\zeta}\bigr) \ar[r] &
1 .
\end{tikzcd}
\end{equation}
The vertical arrows are induced by the finite \'etale quotient maps associated
with the $S_n$-action. In particular, the right-hand square of
Diagram~\eqref{diag:ordered-unordered} is a pullback square in the category of
profinite groups.
\section{Configuration spaces and their fundamental groups}

Let $L$ be an algebraically closed field and let $C$ be a proper smooth
connected curve over $L$. For a positive integer $n$, the ordered
configuration space of $n$ points on $C$ is the open subscheme
\[
  F(C,n) := C^n \setminus \Delta,
\]
where $\Delta \subset C^n$ denotes the fat diagonal
\[
  \Delta := \{(x_1,\dots,x_n) \in C^n \mid x_i = x_j \text{ for some } i \neq j\}.
\]
Thus $F(C,n)$ parametrizes $n$-tuples of pairwise distinct points of~$C$.

The symmetric group $S_n$ acts freely on $F(C,n)$ by permuting the ordered
points, and the quotient
\[
  F(C,[n]) := F(C,n)/S_n
\]
is the unordered configuration space, parametrizing unordered $n$-tuples of
distinct points of $C$. The projection
\[
  F(C,n) \longrightarrow F(C,[n])
\]
is therefore a finite \'etale $S_n$-torsor.

In particular, for a geometric point $\bar a$ of $F(C, n)$ (also viewed as a
geometric point in $F(C,[n])$), one obtains an exact sequence of \'etale
fundamental groups
\[
  1 \longrightarrow \pi_1\bigl(F(C,n),\bar a\bigr)
    \longrightarrow \pi_1\bigl(F(C,[n]),\bar a\bigr)
    \longrightarrow S_n
    \longrightarrow 1.
\]
\subsection{The pro-$\ell$ completion of $\pi_1(F(C, n))$}\label{pro-l comp of F(C,n)}

Suppose that $k$ is a field of characteristic $p>0$, and let $\bar{k}$ be an
algebraic closure of $k$. Let $C$ be a connected, proper, smooth curve of genus
$g \ge 3$ over $\bar{k}$. Choose a complete DVR $R$ of characteristic $0$ with
residue field $\bar{k}$ and a proper, smooth morphism
\[
X \longrightarrow \Spec R
\]
whose special fiber is isomorphic to $C$.
\textcolor{black}{(For example, one may take $R = W(\bar{k})$, the ring of Witt
vectors, which is a complete DVR of mixed characteristic $(0,p)$.)}

Let $K$ be the fraction field of $R$ and $\bar{K}$ an algebraic closure of $K$.
For an integer $n \ge 1$, let $F(X,n)$ denote the ordered configuration space of
$X$ with $n$ labeled points. The Fulton--MacPherson compactification~\cite{FM} of
$F(X,n)$ is proper and smooth over $R$, and $F(X,n)$ is the complement of a
relative normal crossings divisor.

Let $\bar{\eta} : \Spec \bar{K} \to \Spec R$ and
$\bar{\xi} : \Spec \bar{k} \to \Spec R$ be the geometric points corresponding to
$\bar{K}$ and $\bar{k}$, respectively, and note that
\textcolor{black}{$X_{\bar{\xi}} \cong C$.}
Fix an embedding
$\iota : \bar{K} \hookrightarrow \C$,
and denote the associated complex curve of $X_{\bar{\eta}}$ by
\[
C^\an := X_{\bar{\eta}} \otimes_{\bar{K},\iota} \C.
\]
\textcolor{black}{Whenever we use an analytic description, we implicitly pass
through such a choice of~$\iota$.}
Then
\[
F(X_{\bar{\eta}},n) \;\cong\; F(X,n)_{\bar{K}}
\quad\text{and}\quad
F(C,n) \;\cong\; F(X,n)_{\bar{k}}.
\]
Let $\bar{x}$ and $\bar{y}$ be geometric points of $F(X_{\bar{\eta}},n)$ and
$F(C,n)$, respectively. Via the natural morphisms
$F(X_{\bar{\eta}},n) \longrightarrow F(X,n)$ and $F(C,n) \longrightarrow F(X,n)$,
we regard $\bar{x}$ and $\bar{y}$ as geometric points of $F(X,n)$. Fixing an
isomorphism
\[
\lambda :
\pi_1\bigl(F(X,n),\bar{x}\bigr)
\xrightarrow{\sim}
\pi_1\bigl(F(X,n),\bar{y}\bigr),
\]
we obtain a specialization homomorphism
\[
\mathrm{sp}_{\ell} :
\pi_1\bigl(F(X_{\bar{\eta}},n),\bar{x}\bigr)^{\prol}
\;\longrightarrow\;
\pi_1\bigl(F(C,n),\bar{y}\bigr)^{\prol},
\]
for each prime $\ell \ne p$. By
\cite[Expos\'e~XIII, Thm.~2.10, Cor.~2.9]{SGA1}
\textcolor{black}{and the fact that the Fulton--MacPherson compactification is
smooth over $R$ and that $F(X,n)$ is the complement of a relative normal
crossings divisor,}
this specialization map $\mathrm{sp}_{\ell}$ is an isomorphism.

Denote $\pi_1^\top(F(C^\an,n))$ by $P_{g,n}$. Then
\textcolor{black}{$\pi_1\bigl(F(X_{\bar{\eta}},n),\bar{x}\bigr)$ is naturally
isomorphic to the profinite completion $\widehat{P}_{g,n}$,}
and hence we obtain an isomorphism
\begin{equation}\label{pure braid in p}
\pi_1\bigl(F(C,n),\bar{y}\bigr)^{\prol}
\;\cong\;
P_{g,n}^{\prol}.
\end{equation}

\subsection{The $\ell$-adic unipotent completion}

Let $\ell\neq p$ be a prime.  Fix a geometric point $\bar{y}$ of $F(C,n)$ (also view it as a geometric point of $F(C, [n])$) and
for simplicity write 
\[
F_{g,n} := \pi_1\bigl(F(C,n),\bar{y}\bigr),
\qquad
F_{g,[n]} := \pi_1\bigl(F(C, [n]),\bar{y}\bigr).
\]
 Note $F_{g,1}=\pi_1(C, \bar y)$.  Denote by
$F_{g,n}^{\prol}$ and $F_{g,[n]}^{\prol}$ their pro-$\ell$ completions:
\[
F_{g,n}^{\prol} := \pi_1\bigl(F(C,n),\bar{y}\bigr)^{(\ell)},
\qquad
F_{g,[n]}^{\prol} := \pi_1\bigl(F(C, [n]),\bar{y}\bigr)^{(\ell)}.
\]



We now pass to the $\ell$-adic unipotent completions. Following
\cite[Appendix A]{hain-matsumoto}, let
$F_{g,n}^{\un}$ and $F_{g,[n]}^{\un}$
denote the $\ell$-adic unipotent completions of 
$F_{g,n}$ and $F_{g, [n]}$ over $\Q_\ell$,
respectively. These are pro-unipotent algebraic $\Ql$-groups
equipped with Zariski-dense continuous homomorphisms
\[
F_{g,n} \longrightarrow F_{g,n}^{\un}(\Q_\ell),
\qquad
F_{g, [n]} \longrightarrow F_{g,[n]}^{\un}(\Q_\ell).
\]
Since compact subgroups of pro-unipotent $\Q_\ell$-groups are
pro-$\ell$, these maps factor through the pro-$\ell$ completions. Thus the induced maps
\[
F_{g,n}^{\prol} \longrightarrow F_{g,n}^{\un}(\Q_\ell),
\qquad
F_{g,[n]}^{\prol} \longrightarrow F_{g,[n]}^{\un}(\Q_\ell)
\]
are again Zariski-dense.
We define their Lie algebras
\[
\p_{g,n} := \Lie\bigl(F_{g,n}^{\un}\bigr),
\qquad 
\p_{g,[n]} := \Lie\bigl(F_{g,[n]}^{\un}\bigr),
\]
which are pronilpotent Lie algebras over $\Q_\ell$. When $n=1$, we write $F_g^{\un}$ and $\p_g$ for
$F_{g,1}^{\un}$ and $\p_{g,1}$.

For a group $\G$ and a field $L$, set $H_1(\G, L):= \G^\ab\otimes_\Z L$.
\begin{lemma}\label{lem:H1-duality} Let $L$ be an algebraically closed field of characteristic $p \ge 0$, and let
$\ell$ be a prime with $\ell \neq p$. Let $C$ be a proper smooth curve over $L$.
There is a canonical isomorphism of $\Q_\ell$-vector spaces
\[
H^1_{\et}(C,\Q_\ell(1))
\;\cong\;
H_1\bigl(\pi_1(C)^{\prol}, \Q_\ell\bigr),
\]
compatible with the natural symplectic form on the left-hand side.
\end{lemma}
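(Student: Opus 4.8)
The plan is to identify both sides of the asserted isomorphism with the rational $\ell$-adic Tate module $V_\ell(J) := T_\ell(J)\otimes_{\Zl}\Q_\ell$ of the Jacobian $J := \Jac(C)$, and then to match the two natural pairings. Since $J$ is an abelian variety of dimension $g$ over the algebraically closed field $L$, it carries the canonical principal polarization coming from the theta divisor, which I will use to produce the symplectic form on the Tate-module side. Throughout, the hypothesis $\ell\neq p$ enters in two essential ways: the multiplication-by-$\ell^n$ maps on $J$ are finite \'etale, and the group $L^\times$ is $\ell$-divisible.

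For the right-hand side, recall that by definition $H_1(\pi_1(C)^{\prol},\Q_\ell) = (\pi_1(C)^{\prol})^{\ab}\otimes_\Z\Q_\ell$, and that pro-$\ell$ completion commutes with abelianization, so $(\pi_1(C)^{\prol})^{\ab}\cong(\pi_1(C)^{\ab})^{\prol}$. After fixing a base point, the Albanese morphism $C\to J$ induces $\pi_1(C)\to\pi_1(J)$; since $\pi_1(J)$ is abelian this factors through $\pi_1(C)^{\ab}$, and the classical fact is that the resulting map $\pi_1(C)^{\ab}\xrightarrow{\sim}\pi_1(J)$ is an isomorphism. For the abelian variety $J$, the maps $[\ell^n]\colon J\to J$ are finite \'etale Galois covers with group $J[\ell^n]$ and are cofinal in the pro-$\ell$ tower, whence $\pi_1(J)^{\prol}\cong\varprojlim_n J[\ell^n]=T_\ell(J)$. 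Combining these yields $H_1(\pi_1(C)^{\prol},\Q_\ell)\cong V_\ell(J)$.

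For the left-hand side, I would run the Kummer sequence $1\to\mu_{\ell^n}\to\bG_m\xrightarrow{\ell^n}\bG_m\to1$ on $C_{\et}$. As $C$ is proper and connected over the algebraically closed field $L$, one has $H^0(C,\bG_m)=L^\times$, which is $\ell$-divisible; hence the connecting map gives $H^1_{\et}(C,\mu_{\ell^n})\cong\Pic(C)[\ell^n]=J[\ell^n]$, using that $\Pic(C)/\Pic^0(C)\cong\Z$ is torsion-free. Passing to the inverse limit over $n$ gives $H^1_{\et}(C,\Zl(1))\cong T_\ell(J)$, and after $\otimes\,\Q_\ell$ this becomes $H^1_{\et}(C,\Q_\ell(1))\cong V_\ell(J)$. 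Composing with the identification from the previous paragraph produces the desired canonical isomorphism $H^1_{\et}(C,\Q_\ell(1))\cong H_1(\pi_1(C)^{\prol},\Q_\ell)$.

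It remains to check compatibility of the pairings, which I expect to be the only delicate point. On the cohomological side the form is the cup product $H^1_{\et}(C,\Q_\ell(1))^{\otimes2}\to H^2_{\et}(C,\Q_\ell(2))\cong\Q_\ell(1)$, the last identification being the Poincar\'e duality (trace) isomorphism $H^2_{\et}(C,\Q_\ell(1))\cong\Q_\ell$; on the Tate-module side it is the $\ell$-adic Weil pairing $V_\ell(J)\times V_\ell(J)\to\Q_\ell(1)$ attached to the principal polarization. Both are alternating and nondegenerate with values in $\Q_\ell(1)$, so the substantive task is to invoke the standard compatibility that, under the identifications above, the cup product corresponds to the Weil pairing. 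The principal care needed throughout is the bookkeeping of Tate twists and the direction of duality, since the symplectic form naturally lives in $\Q_\ell(1)$ rather than $\Q_\ell$; over the algebraically closed base these are (non-canonically) isomorphic, which is what lets us regard the form as $\Q_\ell$-valued in the applications.
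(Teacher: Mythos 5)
Your proposal is correct, but it takes a genuinely different route from the paper. The paper never mentions the Jacobian: it identifies $H^1_{\et}(C,\Q_\ell(1))$ with $\Hom_{\cts}\bigl(\pi_1(C),\Q_\ell(1)\bigr)$ (degree-one comparison of \'etale and continuous group cohomology), observes that every such homomorphism factors through the pro-$\ell$ abelianization, so that $H^1_{\et}(C,\Q_\ell(1)) \cong H_1\bigl(\pi_1(C)^{\prol},\Q_\ell\bigr)^\vee \otimes \Q_\ell(1)$, and then uses the cup-product pairing $H^1 \otimes H^1 \to H^2 \cong \Q_\ell(1)$ to identify $H^1$ with its own Tate-twisted dual, cancelling the twists; the compatibility with the symplectic form is built into the construction, since the pairing itself furnishes the self-duality. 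You instead identify both sides with $V_\ell(\Jac C)$ --- Kummer theory on the cohomology side, the Albanese isomorphism $\pi_1(C)^{\ab} \cong \pi_1(\Jac C)$ together with the \'etale covers $[\ell^n]$ on the group side --- and then invoke the classical compatibility between the cup product and the Weil pairing of the canonical principal polarization. Your route is more geometric and makes explicit where the symplectic structure lives (the Weil pairing), but it leans on heavier inputs: geometric class field theory for the Albanese isomorphism, and the cup-product/Weil-pairing comparison, which is a genuine classical theorem (Milne, Mumford) that you cite rather than prove, whereas the paper's argument needs only Poincar\'e duality for the curve and elementary twist bookkeeping. One small imprecision: you attribute the role of $\ell \neq p$ partly to the $\ell$-divisibility of $L^\times$, but $L^\times$ is divisible by every integer since $L$ is algebraically closed; what actually requires $\ell \neq p$ on the cohomology side is the exactness of the Kummer sequence as \'etale sheaves (the $\ell^n$-power map on $\bG_m$ being \'etale), parallel to the \'etaleness of $[\ell^n]$ on the Jacobian that you correctly flag.
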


\begin{proof}
Since $C$ is proper and smooth over $L$, the
comparison between \'etale cohomology of $C$ and continuous cohomology of its
profinite fundamental group yields
\[
H^1_{\et}(C,\Q_\ell(1))
\;\cong\;
H^1_{\cts}\bigl(\pi_1(C),\Q_\ell(1)\bigr)
\;\cong\;
\Hom_{\cts}\!\bigl(\pi_1(C),\Q_\ell(1)\bigr).
\]
Since any continuous map $\pi_1(C)\to\Q_\ell(1)$ factors through the 
pro-$\ell$ abelianization of $\pi_1(C)$, we obtain canonical isomorphisms
\begin{align*}
\Hom_{\cts}\!\bigl(\pi_1(C),\Q_\ell(1)\bigr)
&\cong
  \Hom_{\cts}\!\bigl(\pi_1(C)^{\prol},\Q_\ell(1)\bigr)\\
&\cong
  \Hom_{\Q_\ell}\!\bigl(H_1(\pi_1(C)^{\prol},\Q_\ell),\Q_\ell(1)\bigr).
\end{align*}
Since $H_1(\pi_1(C)^{\prol},\Q_\ell)\cong \Q_\ell^{2g}$ is finite-dimensional, the last
space is canonically isomorphic to
$
H_1(\pi_1(C)^{\prol},\Q_\ell)^\vee \otimes \Q_\ell(1).
$
On the other hand, there is a perfect alternating
pairing
\[
H^1_{\et}(C,\Q_\ell(1)) \otimes H^1_{\et}(C,\Q_\ell(1))
   \longrightarrow H^2_{\et}(C,\Q_\ell(2))
   \cong \Q_\ell(1),
\]
which induces a canonical isomorphism
$
H^1_{\et}(C,\Q_\ell(1))
   \xrightarrow{\sim}
   H^1_{\et}(C,\Q_\ell(1))^{\vee}(1).
$
Combining this with the identification above, we compute
\begin{align*}
H^1_{\et}(C,\Q_\ell(1))
&\cong
\bigl(H_1(\pi_1(C)^{\prol},\Q_\ell)^{\vee}\otimes\Q_\ell(1)\bigr)^{\vee}
   \otimes\Q_\ell(1)\\
&\cong
H_1(\pi_1(C)^{\prol},\Q_\ell)\otimes\Q_\ell(-1)\otimes\Q_\ell(1)\\
&\cong
H_1(\pi_1(C)^{\prol},\Q_\ell).
\end{align*}
\end{proof}

\subsection{Abelianizations of unipotent completions of configuration spaces}
We study the abelianizations $H_1(\p_{g,n})$ and
$H_1(\p_{g,[n]})$ \textcolor{black}{via} the comparison with the characteristic zero case.
Set $H_{\Q_\ell} := H^1_{\et}(C,\Q_\ell(1))$.  
The $S_n$-action on $F(C,n)$ by permuting points induces an action on homology.
The following proposition confirms that the familiar characteristic-zero
behavior holds equally in positive characteristic.
\begin{proposition}\label{prop:H1-pgn} For $g\ge 0$ and $n\ge 1$, 
there are natural isomorphisms
\[
H_1(\p_{g,n})
\;\cong\;
H_1\bigl(F_{g,n}^{\prol},\Q_\ell\bigr)
\;\cong\;
H_{\Q_\ell}^{\oplus n}.
\]
\end{proposition}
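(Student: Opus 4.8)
The plan is to factor the statement into the two displayed isomorphisms and to prove each by a different mechanism: the left-hand isomorphism is formal, coming from the structure theory of $\ell$-adic unipotent completion, whereas the right-hand one is obtained by transporting the computation to the complex-analytic model via the specialization isomorphism already recorded in \eqref{pure braid in p}.

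For the first isomorphism $H_1(\p_{g,n}) \cong H_1(F_{g,n}^{\prol}, \Q_\ell)$, I would argue as follows. The Zariski-dense homomorphism $F_{g,n} \to F_{g,n}^{\un}(\Q_\ell)$ factors through $F_{g,n}^{\prol}$, since compact subgroups of a pro-unipotent $\Q_\ell$-group are pro-$\ell$ (this is recalled just before the proposition). It is a standard structural property of unipotent completion over a field of characteristic $0$ (see \cite[Appendix A]{hain-matsumoto}) that the abelianization of the Lie algebra recovers the first homology of the group with coefficients in the ground field; concretely, $H_1(\p_{g,n}) = \p_{g,n}/[\p_{g,n},\p_{g,n}] \cong H_1(F_{g,n}^{\prol}, \Q_\ell)$. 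I would simply invoke this.

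For the second isomorphism I would pass to characteristic $0$. The specialization isomorphism \eqref{pure braid in p} gives $F_{g,n}^{\prol} \cong P_{g,n}^{\prol}$, where $P_{g,n} = \pi_1^\top(F(C^\an, n))$ is the topological fundamental group of the configuration space of the complex curve $C^\an$. Since for a finitely generated group $\G$ one has $H_1(\G^{\prol}, \Q_\ell) \cong \G^{\ab}\otimes_\Z \Q_\ell = H_1(\G, \Q_\ell)$, this reduces the claim to the purely topological computation
\[
H_1\bigl(F(C^\an, n), \Q\bigr) \;\cong\; H_1(C^\an, \Q)^{\oplus n},
\]
which is exactly the characteristic-$0$ result established in \cite{LW25}; I would cite it. (Were a self-contained argument wanted, one proves this by the Fadell--Neuwirth fibrations $F(C^\an, m) \to F(C^\an, m-1)$ with punctured-surface fiber and induction on $m$, checking that the meridian classes around the diagonal vanish in rational $H_1$ when $g\ge 1$ and that all of $H_1$ vanishes when $g = 0$; the $n$ summands are pinned down by the $n$ projections $F(C^\an, n) \to C^\an$.) To identify the target with $H_{\Q_\ell}^{\oplus n}$, I would apply the $n=1$ case together with Lemma \ref{lem:H1-duality}, which yields $H_1(C^\an, \Q)\otimes_\Q \Q_\ell \cong H_1(\pi_1(C)^{\prol}, \Q_\ell) \cong H^1_{\et}(C, \Q_\ell(1)) = H_{\Q_\ell}$. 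Composing the identifications gives $H_1(F_{g,n}^{\prol}, \Q_\ell) \cong H_{\Q_\ell}^{\oplus n}$, with the decomposition indexed by the $n$ marked points and hence $S_n$-equivariant, which is the sense in which the isomorphisms are natural.

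The main obstacle is isolating the genuinely geometric input, namely the topological computation $H_1(F(C^\an, n), \Q) \cong H_1(C^\an, \Q)^{\oplus n}$, i.e.\ the vanishing of the braiding/diagonal-meridian classes in rational homology for $g \ge 1$ (and the total vanishing for $g = 0$). Everything else is transport of structure. Since \cite{LW25} already supplies this computation in characteristic $0$ and the specialization map of \eqref{pure braid in p} is an isomorphism, the remaining work is bookkeeping: verifying that the chain of identifications is canonical and compatible with the projection and diagonal maps, so that the $S_n$-action and the symplectic structure on $H_{\Q_\ell}$ are respected. This compatibility is precisely what will be needed when the unordered group $F_{g,[n]}^{\prol}$ is treated.
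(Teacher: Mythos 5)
Your proposal is correct and takes essentially the same route as the paper's proof: both pass through the specialization isomorphism \eqref{pure braid in p} to reduce to the classical topological computation $H_1(F(C^\an,n))\cong H_1(C^\an)^{\oplus n}$, identify $H_1(C^\an,\Q_\ell)$ with $H_{\Q_\ell}$ via Lemma~\ref{lem:H1-duality}, and then invoke the formal isomorphism $H_1(\p_{g,n})\cong H_1(F_{g,n}^{\prol},\Q_\ell)$ coming from unipotent completion. The one caution is that this formal isomorphism requires $H_1(F_{g,n}^{\prol},\Q_\ell)$ to be finite dimensional, so it should be invoked \emph{after} the computation $H_1(F_{g,n}^{\prol},\Q_\ell)\cong H_{\Q_\ell}^{\oplus n}$ rather than at the outset --- which is precisely the order in which the paper arranges the argument.
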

\begin{proof}
Since compact subgroups of the $\Ql$-rational points of a unipotent $\Ql$-group are pro-$\ell$ groups, it follows that for a profinite group $\G$, there is a canonical isomorphism $\G^\un\cong \G^{\prol\un}$
(cf.~\cite[Appendix A]{hain-matsumoto}). Furthermore, if $H_1(\G, \Ql): = \G^\ab\otimes_\Z\Ql$ is finite dimensional,  we have a canonical isomorphism $H_1(\Lie(\G^\un))\cong H_1(\G, \Ql)$. For $\G =F_{g,n}$, recall that there is a natural isomorphism $F^\prol_{g,n} \cong P_{g,n}^\prol$, and hence we have $H_1(F^\prol_{g,n}, \Ql)\cong H_1(P^\prol_{g,n}, \Ql)$. 
It is a classical computation (see, for example, \cite[Prop.~2.1]{hain_infpretor}) that
\[
H_1\bigl(P_{g,n},\Z\bigr)
\;\cong\;H_1\bigl(F(C^\an, n),\Z\bigr)\;\cong\;
H_1(C^\an,\Z)^{\oplus n}.
\]
Since $P_{g,n}$ is finitely generated, tensoring with $\Q_\ell$, we obtain
\[
H_1\bigl(P_{g,n},\Q_\ell\bigr)
\;\cong\;H_1\bigl(P_{g,n}^\prol,\Q_\ell\bigr)\;\cong\;
H_1(C^\an,\Q_\ell)^{\oplus n}.
\]
Finally, since $\pi_1^\top(C^\an)¢$ is finitely generated, combining the specialization isomorphism
$\pi_1(C)^{\prol}\cong \pi_1^\top(C^\an)^{(\ell)}$
with Lemma~\ref{lem:H1-duality}, we have natural isomorphisms
\[
H_1(C^\an,\Q_\ell)
\;\cong\;
H_1\bigl(\pi_1^\top(C^\an)^{\prol},\Q_\ell\bigr)
\;\cong\;H_1\bigl(\pi_1(C)^{\prol},\Q_\ell\bigr)\;\cong\;
H_{\Q_\ell},
\]
and therefore
\[
H_1\bigl(F_{g,n}^{\prol},\Q_\ell\bigr)\cong H_1\bigl(P_{g,n}^{\prol},\Q_\ell\bigr)
\;\cong\;
H_{\Q_\ell}^{\oplus n}.
\]
Therefore, $H_1(F^\prol_{g,n}, \Ql)$ is finite dimensional, and so there are natural isomorphisms
$$
H_1(\p_{g,n})\cong H_1\bigl(\Lie(F^{\prol\un}_{g,n})\bigr)\cong H_1\bigl(F^\prol_{g,n}, \Ql\bigr) \cong H_{\Q_\ell}^{\oplus n}.
$$
\end{proof}
The previous result computes the first homology in the ordered case.  We now
pass to the unordered configuration space by taking $S_n$-coinvariants.
\begin{proposition}\label{prop:H1-pgn-quot}
For $g\ge 0$ and $n\ge 1$, there are natural isomorphisms
\[
H_1(\p_{g,[n]})
\;\cong\;
H_1\bigl(F_{g,[n]}^{\prol},\Q_\ell\bigr)
\;\cong\;
\bigl(H_{\Q_\ell}^{\oplus n}\bigr)_{S_n}
\;\cong\;
H_{\Q_\ell}.
\]
\end{proposition}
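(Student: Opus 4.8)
The plan is to reduce everything to the ordered computation of Proposition~\ref{prop:H1-pgn} and then pass to $S_n$-coinvariants. The conceptual obstacle is that $S_n$ is not a pro-$\ell$ group, so the extension $1\to F_{g,n}\to F_{g,[n]}\to S_n\to 1$ does not stay exact after pro-$\ell$ completion; one cannot simply take coinvariants of the pro-$\ell$ groups directly. I would therefore carry out the coinvariants computation for the discrete topological fundamental groups over $\C$, where it is a clean application of the five-term exact sequence, and then transport the answer to characteristic $p$ through the $S_n$-equivariant specialization isomorphism, exactly as in \S\ref{pro-l comp of F(C,n)} and the proof of Proposition~\ref{prop:H1-pgn}. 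Concretely, set $P_{g,[n]}:=\pi_1^\top(F(C^\an,[n]))$; the finite \'etale $S_n$-torsor $F(C^\an,n)\to F(C^\an,[n])$ gives an extension $1\to P_{g,n}\to P_{g,[n]}\to S_n\to 1$, and $P_{g,[n]}$ is finitely generated, being an extension of a finite group by a finitely generated group.

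First I would apply the five-term exact sequence of this extension in group homology with $\Q_\ell$-coefficients,
\[
H_2(S_n,\Q_\ell)\longrightarrow\bigl(H_1(P_{g,n},\Q_\ell)\bigr)_{S_n}\longrightarrow H_1(P_{g,[n]},\Q_\ell)\longrightarrow H_1(S_n,\Q_\ell)\longrightarrow 0 .
\]
Since $S_n$ is finite and $\Q_\ell$ has characteristic $0$, both outer terms $H_1(S_n,\Q_\ell)$ and $H_2(S_n,\Q_\ell)$ vanish, so the middle arrow is an isomorphism $H_1(P_{g,[n]},\Q_\ell)\cong(H_1(P_{g,n},\Q_\ell))_{S_n}$. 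Next I would feed in the ordered computation: by Proposition~\ref{prop:H1-pgn} and the classical identification in its proof, there is a natural $S_n$-equivariant isomorphism $H_1(P_{g,n},\Q_\ell)\cong H_{\Q_\ell}^{\oplus n}$ under which $S_n$ permutes the $n$ summands. Writing $H_{\Q_\ell}^{\oplus n}\cong H_{\Q_\ell}\otimes_{\Q_\ell}\Q_\ell^{n}$ with $S_n$ acting only on the permutation module $\Q_\ell^{n}$, the coinvariants split off as
\[
\bigl(H_{\Q_\ell}^{\oplus n}\bigr)_{S_n}\;\cong\;H_{\Q_\ell}\otimes_{\Q_\ell}(\Q_\ell^{n})_{S_n}\;\cong\;H_{\Q_\ell},
\]
the last isomorphism because the coinvariants of the standard permutation representation are one-dimensional via the summation map $(v_1,\dots,v_n)\mapsto\sum_i v_i$. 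Hence $H_1(P_{g,[n]},\Q_\ell)\cong H_{\Q_\ell}$.

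Finally I would transport this across characteristics and to the Lie algebra. The quotient $F(X,[n])=F(X,n)/S_n$ is smooth over $R$ with an $S_n$-equivariant Fulton--MacPherson compactification, so the SGA~1 specialization theorem applies to the unordered configuration space just as in the ordered case and yields an isomorphism $F_{g,[n]}^{\prol}\cong P_{g,[n]}^{\prol}$ compatible with the ordered specialization. As $P_{g,[n]}$ is finitely generated, $H_1(P_{g,[n]}^{\prol},\Q_\ell)\cong H_1(P_{g,[n]},\Q_\ell)$, whence $H_1(F_{g,[n]}^{\prol},\Q_\ell)\cong H_{\Q_\ell}$ is finite dimensional. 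Applying the dictionary $H_1(\Lie(\G^\un))\cong H_1(\G,\Q_\ell)$ used in the proof of Proposition~\ref{prop:H1-pgn} to $\G=F_{g,[n]}^{\prol}$ then gives $H_1(\p_{g,[n]})\cong H_1(F_{g,[n]}^{\prol},\Q_\ell)\cong H_{\Q_\ell}$, completing the chain of natural isomorphisms. I expect the genuine difficulty to lie in the first step---arranging the coinvariants passage so that it is valid for the pro-$\ell$ and unipotent completions even though $S_n$ is not pro-$\ell$---which is precisely why the computation is done downstairs on the discrete groups and only then specialized.
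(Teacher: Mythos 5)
Your overall strategy matches the paper's in outline: reduce to the ordered computation of Proposition~\ref{prop:H1-pgn}, then pass to $S_n$-coinvariants using the vanishing of higher $S_n$-homology with characteristic-zero coefficients. Several of your steps are correct and essentially identical to the paper's (the five-term/Hochschild--Serre argument, the computation $\bigl(H_{\Q_\ell}^{\oplus n}\bigr)_{S_n}\cong H_{\Q_\ell}$ via the summation map, the fact that $H_1(-,\Q_\ell)$ is unchanged by pro-$\ell$ completion of a finitely generated group, and the dictionary $H_1(\Lie(\G^{\un}))\cong H_1(\G,\Q_\ell)$). The genuine gap is your cross-characteristic transfer: you do the coinvariants computation on the discrete group $P_{g,[n]}$ over $\C$ and then assert that ``the SGA~1 specialization theorem applies to the unordered configuration space just as in the ordered case,'' yielding $F_{g,[n]}^{\prol}\cong P_{g,[n]}^{\prol}$. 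That assertion is unproved, and it is precisely the hard point. The specialization machinery used in \S\ref{pro-l comp of F(C,n)} requires exhibiting the open scheme as the complement of a \emph{relative normal crossings divisor} in a scheme \emph{proper and smooth} over $R$. Neither natural candidate for $F(X,[n])$ satisfies this off the shelf: the $S_n$-action on the Fulton--MacPherson compactification is equivariant but is \emph{not free along the boundary} (already for $n=2$ the transposition fixes the exceptional divisor of $\mathrm{Bl}_\Delta(X\times_R X)$ pointwise), so smoothness of the quotient and NCD-ness of its boundary are not automatic and require a real argument --- with additional (wild) complications when $p\le n$, so that $p$ divides $|S_n|$; alternatively, the relative symmetric product $X^{(n)}$ is smooth and proper, but its discriminant divisor is not a normal crossings divisor once $n\ge 3$.

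The paper's proof is engineered to avoid ever specializing the unordered space. It works on the characteristic-$p$ side with the honest profinite extension $1\to F_{g,n}\to F_{g,[n]}\to S_n\to 1$ (exact because no pro-$\ell$ completion has yet been applied), runs Hochschild--Serre with $\Q_\ell$-coefficients there to get $H_1(F_{g,[n]},\Q_\ell)\cong H_1(F_{g,n},\Q_\ell)_{S_n}$, and then uses that $G\to G^{\prol}$ induces an isomorphism on $H_1(-,\Q_\ell)$ for a profinite group $G$; the only specialization input is then the \emph{ordered} isomorphism already established in \S\ref{pro-l comp of F(C,n)}. Your opening observation --- that $S_n$ is not pro-$\ell$, so one cannot complete and then take coinvariants --- identifies the right difficulty, but the correct resolution is to take coinvariants \emph{before} completing, on the characteristic-$p$ profinite groups, rather than over $\C$ followed by an unestablished unordered specialization isomorphism. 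If you reroute your argument this way, the remainder of your proof goes through verbatim.
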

\begin{proof}
As in the ordered case, unipotent completion induces an isomorphism on
$\Q_\ell$-rationalized abelianizations, so we have a canonical identification
\[
H_1(\p_{g,[n]})
\;\cong\;
H_1\bigl(F_{g,[n]}^{\prol},\Q_\ell\bigr).
\]

The finite \'etale Galois cover
\[
F(C,n)\longrightarrow F(C, [n])
\]
with group $S_n$ induces an exact sequence of algebraic fundamental groups
\[
1 \longrightarrow F_{g,n}
  \longrightarrow F_{g,[n]}
  \longrightarrow S_n
  \longrightarrow 1.
\]
The Hochschild--Serre spectral sequence for group homology with coefficients
in $\Q_\ell$ has
\[
E^2_{p,q}
=
H_p\bigl(S_n,\,H_q(F_{g,n},\Q_\ell)\bigr)
\;\Longrightarrow\;
H_{p+q}\bigl(F_{g,[n]},\Q_\ell\bigr).
\]
Since $S_n$ is finite and $|S_n|$ is invertible in $\Q_\ell$, one has
$H_p(S_n,V)=0$ for all $p>0$ and all $\Q_\ell[S_n]$-modules $V$.  In degree~1
this yields a canonical isomorphism
\[
H_1\bigl(F_{g,[n]},\Q_\ell\bigr)
\;\cong\;
H_0\bigl(S_n,\,H_1(F_{g,n},\Q_\ell)\bigr)
=
H_1(F_{g,n},\Q_\ell)_{S_n}.
\]

Indeed, for a profinite group $G$, the abelianization commutes with passage to the
maximal pro-$\ell$ quotient, and tensoring with $\Q_\ell$ kills all prime-to-$\ell$
components. Consequently,
\[
G^{\ab}\otimes \Q_\ell \;\cong\; (G^\ab)^{\prol}\otimes \Q_\ell\;\cong\;(G^{\prol})^{\ab}\otimes \Q_\ell,
\]
and the natural homomorphism $G \to G^{\prol}$ induces an isomorphism on
$H_1(-,\Q_\ell)$.  
Thus
\[
H_1\bigl(F_{g,[n]}^{\prol},\Q_\ell\bigr)
\;\cong\;
H_1\bigl(F_{g,n}^{\prol},\Q_\ell\bigr)_{S_n}.
\]
By the computation in the ordered case (Proposition~\ref{prop:H1-pgn}),
\[
H_1\bigl(F_{g,n}^{\prol},\Q_\ell\bigr)
\;\cong\;
H_{\Q_\ell}^{\oplus n},
\]
where $S_n$ acts by permuting the $n$
summands.  Hence
\[
H_1\bigl(F_{g,[n]}^{\prol},\Q_\ell\bigr)
\;\cong\;
\bigl(H_{\Q_\ell}^{\oplus n}\bigr)_{S_n}.
\]

Finally, the $S_n$-coinvariants of $H_{\Q_\ell}^{\oplus n}$ are canonically
isomorphic to $H_{\Q_\ell}$.
Combining the above identifications gives the natural isomorphisms
\[
H_1(\p_{g,[n]})
\;\cong\;
H_1\bigl(F_{g,[n]}^{\prol},\Q_\ell\bigr)
\;\cong\;
\bigl(H_{\Q_\ell}^{\oplus n}\bigr)_{S_n}
\;\cong\;
H_{\Q_\ell},
\]
which proves the proposition.
\end{proof}


\section{Relative Pro-$\ell$ Completion}
A key ingredient in this work is the study of the algebraic fundamental group
$\pi_1(\M_{g/\bar{k}})$ for a field $k$ of characteristic $p>0$.
A natural first idea is to compare it with the mapping class group
\[
\G_g \;\cong\; \pi_1^{\orb}(\M_{g/\C})
\]
via pro-$\ell$ completion.
However, when $g \ge 3$, the mapping class group $\G_g$ is perfect, and hence its
pro-$\ell$ completion is trivial.
As a result, the usual pro-$\ell$ completion loses all information and cannot be
used to transfer the structure of $\pi_1(\M_{g/\C})$ to the study of
$\pi_1(\M_{g/\bar{k}})$.

To overcome this, one instead considers the \emph{relative} pro-$\ell$
completion. This construction retains essential information about the mapping class group by
taking into account the natural $\ell$-adic monodromy representation
$
\rho : \G_g \longrightarrow \Sp_{2g}(\Z_\ell)
$
and completing $\G_g$ in a manner that is compatible with~$\rho$.  In this way,
the resulting relative pro-$\ell$ completion preserves the $\ell$-adic structure
coming from the geometry of $\M_g$.  As shown in~\cite{HainMatsumoto2009}, the resulting relative
pro-$\ell$ completion is nontrivial and
captures the geometric structure of $\M_g$ much more effectively than the
ordinary pro-$\ell$ completion.
For instance, when marked points are present, the mapping class group embeds
into its relative pro-$\ell$ completion (\cite[Prop.~3.1]{HainMatsumoto2009}).

Because the second author applied this approach in the positive characteristic
setting in~\cite{wat_rational_points_inp}, we briefly recall below the
definition and the basic properties of relative pro-$\ell$ completion.  Full
details and extensive background can be found in~\cite{HainMatsumoto2009}.

\begin{definition}[Relative pro-$\ell$ completion]
Let $\Gamma$ be a discrete group  or a profinite group and let
\[
\rho : \Gamma \longrightarrow P
\]
be a continuous homomorphism into a profinite group $P$ whose image is dense.
The \emph{relative pro-$\ell$ completion} of $\Gamma$ with respect to~$\rho$ is
a profinite group $\Gamma^{\rel}$ equipped with a continuous homomorphism
$\rho^\prol : \Gamma \to \Gamma^{\rel}$ satisfying the following properties:
\begin{enumerate}
  \item there is an exact sequence of profinite groups
  \[
  1 \longrightarrow K \longrightarrow \Gamma^{\rel}
    \xrightarrow{\;\tilde\rho\;} P \longrightarrow 1,
  \]
  where $K$ is a pro-$\ell$ group and the map
$\tilde\rho : \Gamma^{\rel} \to P$ is induced by~$\rho$ such that $\tilde\rho \circ \rho^\prol = \rho$;
  \item the image $\rho^\prol(\Gamma)$ is dense in $\Gamma^{\rel}$;
  \item $\Gamma^{\rel}$ is universal with these properties: if $G$ is a
  profinite group equipped with a surjection $G \to P$ whose kernel is pro-$\ell$
  and if $\varphi : \Gamma \to G$ is a continuous homomorphism lifting $\rho$,
  then there exists a unique continuous homomorphism
  $\tilde\varphi:\Gamma^{\rel} \to G$ such that $\varphi = \tilde\varphi\circ \rho^\prol$.
\end{enumerate}
\end{definition}

\subsection{The relative pro-$\ell$ completion of the mapping class group}

Let $L$ be a field of characteristic $p \ge 0$, and fix an algebraic closure
$\bar L$ of $L$.  Let
$\pi := \pi_{g,n/\bar L} : \cC_{g,n/\bar L} \longrightarrow \M_{g,n/\bar L}$
denote the universal curve.  Fix a prime $\ell \neq p$, and consider the
$\ell$-adic local system
\[
\mathbb H_{\Z_\ell} := R^1\pi_* \Z_\ell(1)
\]
on $\M_{g,n/\bar L}$.
For a geometric point $\bar\zeta$ of $\M_{g,n/\bar L}$, let $C_{\bar\zeta}$ be the
corresponding geometric fiber of $\pi$, and set
$H_{\Z_\ell} := \mathbb H_{\Z_\ell,\bar\zeta}
  = H^1_{\et}(C_{\bar\zeta}, \Z_\ell(1))$.
The natural cup product pairing endows $H_{\Z_\ell}$ with a symplectic
structure, yielding a monodromy representation
\[
\rho_{\Z_\ell,\bar\zeta} :
\pi_1(\M_{g,n/\bar L}, \bar\zeta)
\longrightarrow
\Sp(H_{\Z_\ell}).
\]

When $\operatorname{char}(L)=p>0$ and $\ell\neq p$, the second author proved in
\cite[Thm.~5.6]{wat_rational_points_inp} that the relative pro-$\ell$ completion
of $\pi_1(\M_{g,n/\bar L}, \bar\zeta)$ with respect to
$\rho_{\Z_\ell,\bar\zeta}$ is naturally isomorphic to $\G_{g,n}^{\rel}$, the
relative pro-$\ell$ completion of the mapping class group $\G_{g,n}$ with
respect to its standard symplectic representation
\[
\G_{g,n} \longrightarrow \Sp\bigl(H_1(\Sigma_g,\Z_\ell)\bigr),
\]
where $\Sigma_{g}$ is a compact oriented surface of genus $g$. 

\subsection{Specialization for the universal curve in the ordered case} \label{subsec:homotopy-rel-prol}
Let $k$ be a field of characteristic $p>0$ with algebraic closure $\bar k$.
Let $R := W(\bar k)$ be the ring of Witt vectors of $\bar k$, let
$ K := \mathrm{Frac}(R)$ be its fraction field, and fix an algebraic closure
$\bar K$ of $K$.
Let \(\bar{\xi}\) and \(\bar{\eta}\) be geometric points of
\(\mathcal{M}_{g,n/\bar{k}}\) and \(\mathcal{M}_{g,n/\bar{K}}\), respectively.
Via the canonical morphisms
\(\mathcal{M}_{g,n/\bar{k}} \to \mathcal{M}_{g,n/R}\) and
\(\mathcal{M}_{g,n/\bar{K}} \to \mathcal{M}_{g,n/R}\),
we also regard \(\bar{\xi}\) and \(\bar{\eta}\) as geometric points of
\(\mathcal{M}_{g,n/R}\).
Let \(\pi_{g,n/R} : \mathcal{C}_{g,n/R} \to \mathcal{M}_{g,n/R}\) be the universal
curve, and denote by \(C_{\bar{\xi}}\) and \(C_{\bar{\eta}}\) its fibers over
\(\bar{\xi}\) and \(\bar{\eta}\), respectively. Fix geometric points
\(\bar{y} \in C_{\bar{\xi}}\) and \(\bar{x} \in C_{\bar{\eta}}\).
\begin{proposition}\label{prop:specialization-rel-prol}
For $g\ge 2$ and $n \ge 0$, there is a commutative diagram: 
$$
\xymatrix@R=9pt@C=9pt{
1 \ar[r] &
\pi_1(C_{\bar{\xi}}, \bar{y})^{\prol} \ar[r]  &
\pi_1(\mathcal{C}_{g,n/\bar k}, \bar{y})^{\rel} \ar[r]  &
\pi_1(\mathcal{M}_{g,n/\bar k}, \bar{\xi})^{\rel} \ar[r]  &
1 \\
1 \ar[r] &
\pi_1(C_{\bar{\eta}}, \bar{x})^{\prol} \ar[r] \ar[u]_\cong&
\pi_1(\mathcal{C}_{g,n/\bar{K}}, \bar{x})^{\rel} \ar[r] \ar[u]_{\cong} &
\pi_1(\mathcal{M}_{g,n/\bar{K}}, \bar{\eta})^{\rel} \ar[r] \ar[u]_{\cong} &
1 ,
}
$$
where the rows are exact and all vertical maps are isomorphisms.
\end{proposition}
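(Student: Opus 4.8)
The plan is to realize both rows as the relative pro-$\ell$ completions of the SGA~1 homotopy exact sequence \eqref{eq:ordered-prol-exact} — one over $\bar k$ and one over $\bar K$ — and then to compare them by specialization over $R = W(\bar k)$. The monodromy representation $\pi_1(\cC_{g,n}) \to \Sp(H_{\Z_\ell})$ factors through $\pi_1(\M_{g,n})$ and is therefore trivial on the fiber subgroup $\pi_1(C_{\bar\zeta})$; its relative pro-$\ell$ completion is thus just the pro-$\ell$ completion $\pi_1(C_{\bar\zeta})^{\prol}$, which accounts for the left-hand terms. The non-pro-$\ell$ part of the fiber also dies in the completion, so $\pi_1(\cC_{g,n})^{\rel} = \pi_1'(\cC_{g,n})^{\rel}$ and relative completion may be applied to \eqref{eq:ordered-prol-exact} directly. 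Right-exactness of each completed row is formal from density of images, and the kernel of the middle-to-right map is the closed normal subgroup generated by the (already normal) image of the fiber, so exactness in the middle holds and that kernel is exactly the compact image of $\pi_1(C_{\bar\zeta})^{\prol}$.

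The vertical maps arise from specialization. The local system $\bH_{\Z_\ell} = R^1\pi_*\Z_\ell(1)$ on $\M_{g,n/R}$ restricts compatibly to the generic and special geometric fibers, identifying the two copies of $H_{\Z_\ell}$ and intertwining the two monodromy representations, which share a common image $P \subseteq \Sp(H_{\Z_\ell})$. The SGA~1 specialization homomorphisms on $\pi_1(C_{\bar\eta})$, $\pi_1(\cC_{g,n/\bar K})$ and $\pi_1(\M_{g,n/\bar K})$ thus respect these representations, so by the universal property of relative pro-$\ell$ completion they induce the three vertical arrows, with commuting squares by functoriality. The left vertical is an isomorphism by the smooth–proper specialization theorem for the pro-$\ell$ fundamental group of a curve (\cite[Exp.~XIII]{SGA1}), exactly as used in \S\ref{pro-l comp of F(C,n)}. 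The right vertical is an isomorphism by \cite[Thm.~5.6]{wat_rational_points_inp}, under which both $\pi_1(\M_{g,n/\bar K})^{\rel}$ and $\pi_1(\M_{g,n/\bar k})^{\rel}$ are canonically identified with $\G_{g,n}^{\rel}$ compatibly with specialization.

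For the middle vertical I would run the same specialization mechanism, now applied to the smooth stack $\cC_{g,n/R}$ carrying the local system pulled back from $\M_{g,n/R}$: its prime-to-$p$ fundamental group specializes isomorphically and the monodromy image $P$ is unchanged, so the induced map on relative pro-$\ell$ completions is again an isomorphism. Since all three verticals are then isomorphisms established independently of exactness, the top (characteristic $p$) row is exact as soon as the bottom (characteristic $0$) row is, the morphism of sequences being an isomorphism in each term.

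The one non-formal point — and the \emph{main obstacle} — is left-exactness, i.e. the injectivity of $\pi_1(C_{\bar\zeta})^{\prol}$ into the relative pro-$\ell$ completion of the total space, since relative completion is not left exact in general. I would settle this over $\bar K$, where the completed sequence is the relative pro-$\ell$ completion of the point-pushing (Birman) sequence of the universal curve and injectivity of the pro-$\ell$ surface fiber is available from the characteristic-$0$ theory of \cite{LW25}, resting on the embedding results of \cite{HainMatsumoto2009}. By the previous paragraph this exactness transports to characteristic $p$, completing the proof that both rows are exact with all vertical maps isomorphisms. The delicate part throughout is ensuring that the specialization comparison of \cite{wat_rational_points_inp}, originally stated for the moduli stack, genuinely applies to the total space $\cC_{g,n/R}$ with its pulled-back symplectic local system.
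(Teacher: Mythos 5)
Your overall strategy matches the paper's in its broad outline: complete the SGA~1 sequences over $\bar k$ and $\bar K$, identify $\pi_1(\cC_{g,n})^{\rel}$ with $\pi_1'(\cC_{g,n})^{\rel}$, and compare the two characteristics through $R=W(\bar k)$, citing \cite[Thm.~5.6]{wat_rational_points_inp} on the moduli side. The genuine gap is your treatment of the \emph{middle} vertical map. You propose to prove directly that specialization induces an isomorphism $\pi_1(\cC_{g,n/\bar K},\bar x)^{\rel}\xrightarrow{\sim}\pi_1(\cC_{g,n/\bar k},\bar y)^{\rel}$ on the grounds that the prime-to-$p$ fundamental group of $\cC_{g,n/R}$ ``specializes isomorphically and the monodromy image is unchanged.'' This does not suffice. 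Relative pro-$\ell$ completion is not a functor of the prime-to-$p$ (or pro-$\ell$) quotient: $\Gamma^{\rel}$ surjects onto $\Sp(H_{\Z_\ell})$, which has finite quotients of order divisible by $p$, so the completion map $\Gamma\to\Gamma^{\rel}$ does not factor through the prime-to-$p$ completion of $\Gamma$, and an isomorphism of prime-to-$p$ quotients does not transfer to an isomorphism of relative pro-$\ell$ completions. This is exactly why \cite[Thm.~5.6]{wat_rational_points_inp} for $\M_{g,n}$ is a theorem rather than an instance of SGA~1 specialization, and no analogue of it for the total space $\cC_{g,n}$ exists in the cited literature --- your closing sentence concedes precisely this point. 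Since your proof of exactness of the characteristic-$p$ row proceeds by transport along the vertical isomorphisms, the unproved middle isomorphism is load-bearing for your entire argument.

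The paper's proof inverts this dependency, and that inversion is the idea you are missing. It first proves exactness of \emph{all} completed rows directly and uniformly in both characteristics: right-exactness of relative pro-$\ell$ completion gives exactness at the middle and right, and injectivity of $\pi_1(C_{\bar\xi},\bar y)^{\prol}$ (resp.\ $\pi_1(C_{\bar\eta},\bar x)^{\prol}$) follows from the center-freeness of this pro-$\ell$ surface group via the standard conjugation-action argument --- no appeal to characteristic~$0$ is needed for this step, so there is nothing to transport. With both rows (in fact all four rows of the paper's diagram over $\bar k$, $R$, $R$, $\bar K$) exact, and with the left verticals (curve specialization) and right verticals (\cite[Thm.~5.6]{wat_rational_points_inp}) known to be isomorphisms, the middle verticals are isomorphisms by the five lemma; no specialization theorem for $\cC_{g,n}$ is ever invoked. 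If you reorganize your argument this way --- replacing your transport-of-exactness step by the center-freeness argument, and obtaining the middle isomorphism as a \emph{consequence} of exactness rather than an input to it --- the proof closes.
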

\begin{proof}
Fix an isomorphism
\[
\lambda :
\pi_1(\mathcal{C}_{g,n/R}, \bar{x})
\;\xrightarrow{\;\sim\;}
\pi_1(\mathcal{C}_{g,n/R}, \bar{y}).
\]
This induces isomorphisms
\(\lambda' :
\pi_1'(\mathcal{C}_{g,n/R}, \bar{x}) \xrightarrow{\sim}
\pi_1'(\mathcal{C}_{g,n/R}, \bar{y})\),
\(\bar\lambda :
\pi_1(\mathcal{M}_{g,n/R}, \bar{\eta}) \xrightarrow{\sim}
\pi_1(\mathcal{M}_{g,n/R}, \bar{\xi})\), and 
\(\tilde{\lambda} :
\pi_1(C_{\bar{\eta}}, \bar{x})^{\prol} \xrightarrow{\sim}
\pi_1(C_{\bar{\xi}}, \bar{y})^{\prol}\),
which together make the following diagram commute:
\begin{equation}\label{eq:specialization-diagram}
\xymatrix@C=10pt@R=10pt{
1 \ar[r] &
\pi_1(C_{\bar{\xi}}, \bar{y})^{\prol} \ar[r] \ar@{=}[d] &
\pi_1'(\mathcal{C}_{g,n/\bar{k}}, \bar{y}) \ar[r] \ar[d] &
\pi_1(\mathcal{M}_{g,n/\bar{k}}, \bar{\xi}) \ar[r] \ar[d] &
1 \\
1 \ar[r] &
\pi_1(C_{\bar{\xi}}, \bar{y})^{\prol} \ar[r]  &
\pi_1'(\mathcal{C}_{g,n/R}, \bar{y}) \ar[r]  &
\pi_1(\mathcal{M}_{g,n/R}, \bar{\xi}) \ar[r]  &
1 \\
1 \ar[r] &
\pi_1(C_{\bar{\eta}}, \bar{x})^{\prol} \ar[r] \ar@{=}[d] \ar[u]^{\tilde{\lambda}}_\cong&
\pi_1'(\mathcal{C}_{g,n/R}, \bar{x}) \ar[r]\ar[u]^{\lambda'}_\cong &
\pi_1(\mathcal{M}_{g,n/R}, \bar{\eta}) \ar[r]\ar[u]^{\bar{\lambda}}_\cong &
1 \\
1 \ar[r] &
\pi_1(C_{\bar{\eta}}, \bar{x})^{\prol} \ar[r] &
\pi_1'(\mathcal{C}_{g,n/\bar{K}}, \bar{x}) \ar[r] \ar[u] &
\pi_1(\mathcal{M}_{g,n/\bar{K}}, \bar{\eta}) \ar[r] \ar[u] &
1 ,
}
\end{equation}
where rows are exact (see \S \ref{hom seq in p}).
Observe that the monodromy representation
\(\rho_{\mathcal C_{g,n}} :
\pi_1(\mathcal C_{g,n/\bar k}) \to \Sp(H_{\mathbb Z_\ell})\)
factors through \(\pi_1(\mathcal M_{g,n/\bar k})\), and hence through the quotient
\(\pi_1'(\mathcal C_{g,n/\bar k})\). It follows that there is a unique homomorphism
\(\pi_1(\mathcal C_{g,n/\bar k})^{\rel} \to
\pi_1'(\mathcal C_{g,n/\bar k})^{\rel}\) by naturality of relative pro-$\ell$ completion. On the other hand, we have a commutative diagram
\[
\xymatrix@R=9pt{
1 \ar[r] & N \ar[r] \ar[d] &
\pi_1(\mathcal C_{g,n/\bar k}) \ar[r] \ar[d] &
\pi_1'(\mathcal C_{g,n/\bar k}) \ar[r] \ar[d] &
1 \\
1 \ar[r] & K \ar[r] &
\pi_1(\mathcal C_{g,n/\bar k})^{\rel} \ar[r] &
\Sp(H_{\mathbb Z_\ell}) \ar[r] &
1 ,
}
\]
where \(N\) is defined in \S\ref{hom seq in p}. Since \(K\) is pro-\(\ell\), the
image of \(N\) in \(\pi_1(\mathcal C_{g,n/\bar k})^{\rel}\) is trivial. Hence the
completion \(\pi_1(\mathcal C_{g,n/\bar k}) \to
\pi_1(\mathcal C_{g,n/\bar k})^{\rel}\) factors through
\(\pi_1'(\mathcal C_{g,n/\bar k})\), inducing a unique map
\(\pi_1'(\mathcal C_{g,n/\bar k})^{\rel} \to
\pi_1(\mathcal C_{g,n/\bar k})^{\rel}\).

By the universal property of relative pro-\(\ell\) completion, these two maps are
inverse isomorphisms, and thus
\(\pi_1(\mathcal C_{g,n/\bar k})^{\rel} \cong
\pi_1'(\mathcal C_{g,n/\bar k})^{\rel}\).
The same argument applies over \(R\) and \(\bar K\).

Applying relative pro-\(\ell\) completion to the diagram
\eqref{eq:specialization-diagram}, we obtain
\begin{equation}\label{eq:specialization-diagram-rel}
\xymatrix@C=10pt@R=10pt{
1 \ar[r] &
\pi_1(C_{\bar{\xi}}, \bar{y})^{\prol} \ar[r] \ar@{=}[d] &
\pi_1(\mathcal{C}_{g,n/\bar{k}}, \bar{y})^{\rel} \ar[r] \ar[d]^{\cong} &
\pi_1(\mathcal{M}_{g,n/\bar{k}}, \bar{\xi})^{\rel} \ar[r] \ar[d]^{\cong} &
1 \\
1 \ar[r] &
\pi_1(C_{\bar{\xi}}, \bar{y})^{\prol} \ar[r]  &
\pi_1(\mathcal{C}_{g,n/R}, \bar{y})^{\rel} \ar[r]  &
\pi_1(\mathcal{M}_{g,n/R}, \bar{\xi})^{\rel} \ar[r]  &
1 \\
1 \ar[r] &
\pi_1(C_{\bar{\eta}}, \bar{x})^{\prol} \ar[r] \ar@{=}[d]\ar[u]^{\tilde{\lambda}}_\cong &
\pi_1(\mathcal{C}_{g,n/R}, \bar{x})^{\rel} \ar[r]\ar[u]^{\lambda^{'\rel}}_\cong  &
\pi_1(\mathcal{M}_{g,n/R}, \bar{\eta})^{\rel} \ar[r]\ar[u]^{\bar\lambda^\rel}_\cong &
1 \\
1 \ar[r] &
\pi_1(C_{\bar{\eta}}, \bar{x})^{\prol} \ar[r] &
\pi_1(\mathcal{C}_{g,n/\bar{K}}, \bar{x})^{\rel} \ar[r] \ar[u]_{\cong} &
\pi_1(\mathcal{M}_{g,n/\bar{K}}, \bar{\eta})^{\rel} \ar[r] \ar[u]_{\cong} &
1 .
}
\end{equation}
The rows are exact by right exactness of relative pro-\(\ell\) completion and the
center-freeness of \(\pi_1(C_{\bar\xi})^{\prol} (\cong \pi_1(C_{\bar\eta})^{\prol}) \), and the right vertical homomorphisms are
isomorphisms (see the proof of \cite[Thm.~5.6]{wat_rational_points_inp}).
It therefore follows that all middle vertical homomorphisms are isomorphisms.
\end{proof}

\subsection{Specialization for configuration spaces in the ordered case}

Let \(L\) be an algebraically closed field of characteristic \(0\).
Let \(\bar\eta\) be a geometric point of \(\mathcal M_{g/\bar L}\), and let
\(C_{\bar\eta}\) be the fiber of
\(\pi_{g/L} : \mathcal C_{g/L} \to \mathcal M_{g/L}\) over \(\bar\eta\).
Fix a geometric point \(\bar x \in F(C_{\bar\eta},n)\).
Recall that \(\pi_1(F(C_{\bar\eta},n),\bar x)\) is naturally isomorphic to the
profinite completion \(\widehat P_{g,n}\); \textcolor{black}{we identify}
\(\pi_1(F(C_{\bar\eta},n),\bar x)\) \textcolor{black}{with \(\widehat P_{g,n}\)}.

\textcolor{black}{The forgetful morphism \(\mathcal M_{g,n/L}\to \mathcal M_{g/L}\) yields an exact sequence}
\[
1 \longrightarrow \widehat P_{g,n}
\longrightarrow \pi_1(\mathcal M_{g,n/ L},\bar x)
\longrightarrow \pi_1(\mathcal M_{g/ L},\bar\eta)
\longrightarrow 1 .
\]
Injectivity on the left follows from the center-freeness of \(\widehat P_{g,n}\),
which can be proved using \cite{Anderson1974} and \cite[Lem.~4.2.2]{Birman1974}.
Conjugation in $\pi_1(\mathcal M_{g,n/L})$ gives a continuous action on
$\widehat P_{g,n}$, which induces a continuous action on $P_{g,n}^{\prol}$.
Consequently, there is a natural homomorphism
\[
\rho^{\prol} :
\pi_1(\mathcal M_{g,n/ L})
\longrightarrow
\Aut_{\cts}(P_{g,n}^{\prol}).
\]

\begin{lemma}\label{lem:rho-prol-factors-rel}
The homomorphism \(\rho^\prol:\pi_1(\M_{g,n/L})\to \Aut_\cts(P_{g,n}^\prol)\)
factors through the relative pro-\(\ell\) completion
\(\pi_1(\M_{g,n/L})^{\rel}\).
\end{lemma}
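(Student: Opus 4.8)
The plan is to verify the universal property of the relative pro-$\ell$ completion. Recall that $\pi_1(\M_{g,n/L})^{\rel}$ is formed with respect to the monodromy representation $\rho : \pi_1(\M_{g,n/L}) \to \Sp(H_{\Z_\ell})$, and that by part~(3) of its defining universal property it suffices to produce a profinite group $G$ sitting in an extension
\[
1 \longrightarrow \mathcal{K} \longrightarrow G \longrightarrow \Sp(H_{\Z_\ell}) \longrightarrow 1
\]
with $\mathcal{K}$ pro-$\ell$, together with a continuous lift of $\rho$ to $G$ whose composition with an inclusion $G \hookrightarrow \Aut_\cts(P_{g,n}^{\prol})$ is $\rho^{\prol}$. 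The universal property then furnishes the desired factorization $\pi_1(\M_{g,n/L})^{\rel} \to G \hookrightarrow \Aut_\cts(P_{g,n}^{\prol})$ of $\rho^{\prol}$.

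First I would pass to first homology. Functoriality of abelianization yields a continuous homomorphism
\[
a : \Aut_\cts\bigl(P_{g,n}^{\prol}\bigr) \longrightarrow \Aut\bigl(H_1(P_{g,n}^{\prol},\Z_\ell)\bigr),
\]
and, by the integral form of Proposition~\ref{prop:H1-pgn} (that is, $H_1(P_{g,n},\Z)\cong H_1(C,\Z)^{\oplus n}$ tensored with $\Z_\ell$), the target is $\Aut(H_{\Z_\ell}^{\oplus n})$. The normal subgroup $\widehat{P}_{g,n}\subset \pi_1(\M_{g,n/L})$ acts on $P_{g,n}^{\prol}$ by inner automorphisms, hence trivially on $H_1$; since the projections $p_i : F(C,n)\to C$ are monodromy-equivariant, the residual action of the quotient $\pi_1(\M_{g/L})$ respects the decomposition $H_{\Z_\ell}^{\oplus n}$ and is the symplectic monodromy on each summand. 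Thus $a\circ\rho^{\prol}$ takes values in the diagonally embedded $\Sp(H_{\Z_\ell})\subset \Aut(H_{\Z_\ell}^{\oplus n})$ and coincides with $\rho$ there. I would therefore set $G := a^{-1}\bigl(\Sp(H_{\Z_\ell})\bigr)$. By construction $\rho^{\prol}$ takes values in $G$ and lifts $\rho$, and $a|_G : G \to \Sp(H_{\Z_\ell})$ is surjective because $\rho$ already surjects onto $\Sp(H_{\Z_\ell})=\Sp_{2g}(\Z_\ell)$: over $\C$ the mapping class group surjects onto $\Sp_{2g}(\Z)$, and this passes to any algebraically closed $L$ of characteristic $0$ by invariance of $\pi_1$ together with profinite completion.

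The remaining point, which I expect to be the crux, is that $\mathcal{K} := \ker(a)$ is pro-$\ell$; granting this, $\ker(a|_G)=\mathcal{K}$ is pro-$\ell$ and the universal property applies. To prove it I would use the Andreadakis--Johnson filtration attached to the lower central series $P_{g,n}^{\prol}=P^{(1)}\supseteq P^{(2)}\supseteq\cdots$: writing $\mathcal{K}_m$ for the automorphisms acting trivially on $P_{g,n}^{\prol}/P^{(m+1)}$, one has $\mathcal{K}_1=\mathcal{K}$ (as $P_{g,n}^{\prol}/P^{(2)}=H_1(P_{g,n}^{\prol},\Z_\ell)$), the filtration is separated because $P_{g,n}^{\prol}$ is residually nilpotent, and the Johnson homomorphisms embed each quotient $\mathcal{K}_m/\mathcal{K}_{m+1}$ into the finitely generated $\Z_\ell$-module $\Hom(\Gr^1,\Gr^{m+1})$. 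Hence $\mathcal{K}=\varprojlim_m \mathcal{K}/\mathcal{K}_m$ is pro-$\ell$; this is the pro-$\ell$ analogue of the standard fact underlying the theory in \cite{HainMatsumoto2009}. With $\mathcal{K}$ pro-$\ell$, the universal property of $\pi_1(\M_{g,n/L})^{\rel}$ produces the factorization of $\rho^{\prol}$, as explained in the first paragraph.
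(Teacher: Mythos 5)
Your proposal is correct and takes essentially the same route as the paper: both reduce to the action on $H_1(P_{g,n}^{\prol},\Z_\ell)\cong H_{\Z_\ell}^{\oplus n}$, identify the induced image as the diagonal copy of $\Sp(H_{\Z_\ell})$, observe that the kernel of $\Aut_\cts(P_{g,n}^{\prol})\to\Aut_\cts(H_1)$ is pro-$\ell$, and invoke the universal property of the relative pro-$\ell$ completion. The only differences are cosmetic (you take $G$ to be the preimage of the diagonal $\Sp(H_{\Z_\ell})$ where the paper takes the image of $\rho^{\prol}$, forcing you to also check surjectivity onto $\Sp_{2g}(\Z_\ell)$) and that you supply, via the Andreadakis--Johnson filtration, a proof of the pro-$\ell$-ness of that kernel, which the paper simply cites as ``a standard fact.''
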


\begin{proof}
Denote the image of \(\rho^\prol\) by \(G\subset \Aut_\cts(P_{g,n}^\prol)\).
The natural monodromy representation
\(\pi_1(\M_{g,n/L})\to \Sp(H_{\Zl})\)
\textcolor{black}{factors through \(G\), yielding a continuous homomorphism}
\(\psi: G \to \Sp(H_{\Zl})\).

Since \(P_{g,n}\) is finitely generated and \(P_{g,n}^{\ab}\) has no \(\ell\)-primary
torsion, the canonical map \(P_{g,n}\to P_{g,n}^{\prol}\) induces
\[
H_1(P_{g,n}^{\prol}, \Zl)\cong H_1(P_{g,n},\Z)\otimes \Z_\ell.
\]
\textcolor{black}{Let}
\[
\Aut^0_\cts(P_{g,n}^\prol)
:=
\ker\!\Bigl(\Aut_\cts(P_{g,n}^\prol)\to
\Aut_\cts\bigl(H_1(P_{g,n})\otimes\Z_\ell\bigr)\Bigr).
\]
\textcolor{black}{A standard fact implies that \(\Aut^0_\cts(P_{g,n}^\prol)\) is a pro-\(\ell\) group.}
Moreover, the image of \(G\) in
\(\Aut_\cts(H_1(P_{g,n})\otimes\Zl)\)
\textcolor{black}{is the diagonal copy of \(\Sp(H_{\Zl})\)}. Thus we have a commutative
diagram with exact rows
\[
\xymatrix@R=11pt@C=9pt{
1 \ar[r]
  & \ker\psi \ar@{^{(}->}[d] \ar[r]
  & G \ar@{^{(}->}[d] \ar[r]^{\psi}
  & \Sp(H_{\Zl}) \ar@{^{(}->}[d]^{\diag} \ar[r]
  & 1 \\
1 \ar[r]
  & \Aut^0_\cts(P_{g,n}^\prol) \ar[r]
  & \Aut_\cts(P_{g,n}^\prol) \ar[r]
  & \Aut_\cts(H_1(P_{g,n})\otimes\Zl) \ar[r]
  & 1 .
}
\]
\textcolor{black}{In particular, \(\ker\psi\subset \Aut^0_\cts(P_{g,n}^\prol)\) is a pro-\(\ell\) group.}
By the universal property of relative pro-\(\ell\) completion, this implies that
\(\rho^\prol\) factors through \(\pi_1(\M_{g,n/L})^{\rel}\), i.e.\ there exists a
homomorphism \(\pi_1(\M_{g,n/L})^{\rel}\to G\) whose composition with the inclusion
\(G\hookrightarrow \Aut_\cts(P_{g,n}^\prol)\) equals \(\rho^\prol\).
\end{proof}

Recall the notation from \S\ref{pro-l comp of F(C,n)}. \textcolor{black}{Now return to the mixed-characteristic model \(X/R\) used to define specialization.}
Let \(C\) be a smooth proper curve over \(\bar k\), and let
\(X \to \Spec R\) be a smooth proper \(R\)-scheme whose special fiber is
isomorphic to \(C\). There is a unique morphism
\(\psi : \Spec R \to \mathcal M_{g/R}\) such that the pullback of the universal
curve \(\mathcal C_g \to \mathcal M_g\) along \(\psi\) is isomorphic to
\(X \to \Spec R\); we denote this pullback by \(X_R\). Let
\(\phi : X_R \to \mathcal C_{g/R}\) be the induced morphism.

The morphism \(\phi : X_R \to \mathcal C_{g/R}\) is a monomorphism; hence the
induced morphism \(\phi^n : X_R^n \to \mathcal C_{g/R}^n\) preserves the complement
of the fat diagonal and therefore induces a morphism
\[
F(\phi) : F(X_R,n) \longrightarrow F(\mathcal C_{g/R},n).
\]
Since \(\mathcal M_{g,n/R}\) is canonically isomorphic to
\(F(\mathcal C_{g/R},n)\), we obtain a sequence of morphisms
\begin{equation}\label{eq:FXR-to-Mgn-to-Mg}
F(X_R,n) \xrightarrow{\,F(\phi)\,} \mathcal M_{g,n/R}
\longrightarrow \mathcal M_{g/R}.
\end{equation}

Let \(\bar\xi\) and \(\bar\eta\) be geometric points of
\(\mathcal M_{g/\bar k}\) and \(\mathcal M_{g/\bar K}\), respectively. Via the structure morphism \(\mathcal M_{g/R}\to \Spec R\), we also regard
\(\bar\xi\) and \(\bar\eta\) as the closed geometric point and the generic geometric
point of \(\Spec R\), respectively.

Over \(\bar\eta\), we have
\(F(X_R,n)_{\bar\eta}\cong F(X_{\bar\eta},n)\), and there is an exact sequence of
étale fundamental groups
\[
1 \longrightarrow
\pi_1\bigl(F(X_{\bar\eta},n),\bar x\bigr)
\longrightarrow
\pi_1\bigl(\mathcal M_{g,n/\bar K},\bar x\bigr)
\longrightarrow
\pi_1\bigl(\mathcal M_{g/\bar K},\bar\eta\bigr)
\longrightarrow 1 .
\]

On the other hand, over the geometric point \(\bar\xi\), we have
\(F(X_R,n)_{\bar\xi}\cong F(C,n)\), and there is a sequence
\[
\pi_1\bigl(F(C,n),\bar y\bigr)
\longrightarrow
\pi_1\bigl(\mathcal M_{g,n/\bar k},\bar y\bigr)
\longrightarrow
\pi_1\bigl(\mathcal M_{g/\bar k},\bar\xi\bigr)
\longrightarrow 1,
\]
where the composition is trivial and the second homomorphism is surjective.

\begin{proposition}\label{prop:ordered-specialization-rel-prol}
For $g\ge 2$ and $n\ge 1$, there is a commutative diagram:
\[
\xymatrix@R=9pt@C=9pt{
1 \ar[r] &
\pi_1(F(C,n), \bar{y})^{\prol} \ar[r]  &
\pi_1(\mathcal{M}_{g,n/\bar k}, \bar{y})^{\rel} \ar[r]  &
\pi_1(\mathcal{M}_{g/\bar k}, \bar{\xi})^{\rel} \ar[r]  &
1 \\
1 \ar[r] &
P^{\prol}_{g,n} \ar[r] \ar[u]_\cong&
\pi_1(\mathcal{M}_{g,n/\bar{K}}, \bar{x})^{\rel} \ar[r] \ar[u]_{\cong} &
\pi_1(\mathcal{M}_{g/\bar{K}}, \bar{\eta})^{\rel} \ar[r] \ar[u]_{\cong} &
1 ,
}
\]
where the rows are exact and all vertical maps are isomorphisms.
\end{proposition}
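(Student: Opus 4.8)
The plan is to imitate the proof of Proposition~\ref{prop:specialization-rel-prol}, replacing the universal-curve fibration by the forgetful fibration $\M_{g,n}\cong F(\cC_{g/R},n)\to \M_g$ recorded in~\eqref{eq:FXR-to-Mgn-to-Mg}. Over the mixed-characteristic base $R=W(\bar k)$ one fixes a path isomorphism $\lambda:\pi_1(\M_{g,n/R},\bar x)\xrightarrow{\sim}\pi_1(\M_{g,n/R},\bar y)$ and assembles, exactly as in diagram~\eqref{eq:specialization-diagram}, a four-row commutative ladder whose outer rows are the homotopy sequences over $\bar K$ (fully exact) and over $\bar k$ (a priori only right exact), linked through the $R$-model by the specialization maps of \'etale fundamental groups. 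Applying relative pro-$\ell$ completion to this ladder produces the asserted two-row diagram, and it then suffices to check that each completed row is short exact and that the two outer vertical arrows are isomorphisms; the five lemma forces the middle vertical arrow to be an isomorphism as well.

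First I would pin down the leftmost term after completion. The symplectic monodromy $\pi_1(\M_{g,n})\to\Sp(H_{\Zl})$ factors through $\pi_1(\M_g)$ by construction, hence restricts trivially to the fibre subgroup $\pi_1(F(C,n))$; therefore the image of $\pi_1(F(C,n))$ in $\pi_1(\M_{g,n})^{\rel}$ lands in the pro-$\ell$ kernel of $\pi_1(\M_{g,n})^{\rel}\to\Sp(H_{\Zl})$, and the relative completion of the fibre collapses to its ordinary pro-$\ell$ completion, which is $P_{g,n}^{\prol}$ by the specialization isomorphism~\eqref{pure braid in p}. Right exactness of relative pro-$\ell$ completion then yields the surjection $P_{g,n}^{\prol}\twoheadrightarrow\ker\!\bigl(\pi_1(\M_{g,n})^{\rel}\to\pi_1(\M_g)^{\rel}\bigr)$ in each characteristic. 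This step is purely formal and works even over $\bar k$, where the original homotopy sequence is only right exact.

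The main obstacle is injectivity of $P_{g,n}^{\prol}\to\pi_1(\M_{g,n})^{\rel}$, i.e.\ left exactness of the completed rows, and this is exactly where center-freeness enters and the argument must do genuine work. I would invoke Lemma~\ref{lem:rho-prol-factors-rel}: the conjugation action $\rho^{\prol}:\pi_1(\M_{g,n})\to\Aut_\cts(P_{g,n}^{\prol})$ factors through $\pi_1(\M_{g,n})^{\rel}$, giving $\bar\rho^{\prol}:\pi_1(\M_{g,n})^{\rel}\to\Aut_\cts(P_{g,n}^{\prol})$. Restricting along $P_{g,n}^{\prol}\to\pi_1(\M_{g,n})^{\rel}$ recovers the inner-action map $P_{g,n}^{\prol}\to\Inn(P_{g,n}^{\prol})$; since $P_{g,n}^{\prol}$ is center-free for $g\ge 2$ (inherited from the center-freeness of the surface pure braid group $P_{g,n}$ via~\cite{Anderson1974} and~\cite[Lem.~4.2.2]{Birman1974}), this map is an isomorphism. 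Thus $\bar\rho^{\prol}$ provides a retraction of $P_{g,n}^{\prol}\to\pi_1(\M_{g,n})^{\rel}$, forcing the latter to be injective, so every completed row is short exact. This is the one point where the $g\ge 2$ hypothesis is genuinely used, through the center of the fibre, and it is the step I expect to require the most care.

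Finally, the two right-hand vertical specialization maps $\pi_1(\M_{g,n/\bar K})^{\rel}\to\pi_1(\M_{g,n/\bar k})^{\rel}$ and $\pi_1(\M_{g/\bar K})^{\rel}\to\pi_1(\M_{g/\bar k})^{\rel}$ are isomorphisms: both sides are canonically identified with $\G_{g,n}^{\rel}$ and $\G_g^{\rel}$ respectively by the characteristic-independence of the relative pro-$\ell$ completion of the mapping class group~\cite[Thm.~5.6]{wat_rational_points_inp}, which is precisely the input already used in Proposition~\ref{prop:specialization-rel-prol}. The leftmost vertical arrow is the specialization isomorphism $P_{g,n}^{\prol}\cong\pi_1(F(C,n),\bar y)^{\prol}$ of~\eqref{pure braid in p}. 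With both outer columns isomorphisms and all rows short exact, the five lemma gives that the middle column is an isomorphism, completing the argument.
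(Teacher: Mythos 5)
Your proposal follows the same architecture as the paper's proof: the four-row ladder over $R=W(\bar k)$ built from a path isomorphism, relative pro-$\ell$ completion, right-exactness for the surjectivity half, Lemma~\ref{lem:rho-prol-factors-rel} plus center-freeness of the fibre for the injectivity half, and \cite[Thm.~5.6]{wat_rational_points_inp} together with the specialization isomorphism~\eqref{pure braid in p} for the vertical arrows. There is, however, one genuine gap, located exactly at the step you yourself flag as requiring the most care. You assert that $P_{g,n}^{\prol}$ is center-free because this is ``inherited from the center-freeness of the surface pure braid group $P_{g,n}$'' via \cite{Anderson1974} and \cite[Lem.~4.2.2]{Birman1974}. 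That inference is not valid: triviality of the center does not pass from a discrete group to its pro-$\ell$ completion, since pro-$\ell$ completion is far from faithful and can create center (e.g.\ $A_4$ is center-free, while its pro-$3$ completion is $\Z/3$, which is its own center). In the paper, those two references are invoked only for the \emph{profinite} completion $\widehat P_{g,n}$, and only to obtain left-exactness of the uncompleted homotopy sequence over a characteristic-$0$ base --- a different statement from the one your retraction argument needs. For the pro-$\ell$ completion, the paper argues via the weight filtration $W_\bullet$ on $\pi_1(F(X_{\bar\eta},n))^{\prol}$ constructed in \cite{NTU}: since the associated graded Lie algebra has trivial center and the filtration is separated \cite[Cor.~2.6]{NTU}, the group $\pi_1(F(X_{\bar\eta},n))^{\prol}\cong P_{g,n}^{\prol}$ is center-free. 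Some graded or filtration-based argument of this kind must be supplied; without it, your inner-action step does not get off the ground.

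A secondary, repairable imprecision: Lemma~\ref{lem:rho-prol-factors-rel} is stated over an algebraically closed field of characteristic $0$, and over $\bar k$ the uncompleted sequence is only right-exact, so the conjugation action of $\pi_1(\M_{g,n/\bar k})$ on the fibre subgroup is not directly available in characteristic $p$. Your claim that the argument makes ``every completed row'' short exact should therefore be run only for the characteristic-$0$ row; exactness of the characteristic-$p$ row is then transported through the vertical isomorphisms, which you, like the paper, obtain from \cite[Thm.~5.6]{wat_rational_points_inp} and the specialization isomorphism. This also makes your final appeal to the five lemma redundant: once the middle vertical arrow is known to be an isomorphism from \cite[Thm.~5.6]{wat_rational_points_inp}, there is nothing left for the five lemma to prove --- and this is exactly how the paper organizes the deduction.
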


\begin{proof}
Fix the base-point change isomorphism
\[
\lambda:\pi_1\bigl(F(X_R,n),\bar x\bigr)\xrightarrow{\cong}
\pi_1\bigl(F(X_R,n),\bar y\bigr),
\]
where \(\bar x\) (resp.\ \(\bar y\)) lies over \(\bar\eta\) (resp.\ \(\bar\xi\)).
Via the sequence \eqref{eq:FXR-to-Mgn-to-Mg}, the isomorphism \(\lambda\) induces
isomorphisms
\[
\lambda':
\pi_1\bigl(\mathcal M_{g,n/R},\bar x\bigr)\xrightarrow{\cong}
\pi_1\bigl(\mathcal M_{g,n/R},\bar y\bigr),
\qquad
\lambda'':
\pi_1\bigl(\mathcal M_{g/R},\bar\eta\bigr)\xrightarrow{\cong}
\pi_1\bigl(\mathcal M_{g/R},\bar\xi\bigr).
\]
These fit into a commutative diagram
\[
\xymatrix@R=9pt{
\pi_1(F(X_R,n),\bar x)\ar[r]^{F(\phi)_*}\ar[d]_{\lambda}^{\cong}&
\pi_1(\mathcal M_{g,n/R},\bar x)\ar[r]\ar[d]_{\lambda'}^{\cong}&
\pi_1(\mathcal M_{g/R},\bar\eta)\ar[d]_{\lambda''}^{\cong}\\
\pi_1(F(X_R,n),\bar y)\ar[r]^{F(\phi)_*}&
\pi_1(\mathcal M_{g,n/R},\bar y)\ar[r]&
\pi_1(\mathcal M_{g/R},\bar\xi).
}
\]

Passing to fibers over the generic and special geometric points yields the
following commutative diagram, whose top row is exact:
\[
\xymatrix@R=9pt{
1\ar[r]&
\pi_1(F(X_{\bar\eta},n),\bar x)\ar[r]\ar[d]&
\pi_1(\mathcal M_{g,n/\bar K},\bar x)\ar[r]\ar[d]&
\pi_1(\mathcal M_{g/\bar K},\bar\eta)\ar[r]\ar[d]&
1\\
&
\pi_1(F(X_R,n),\bar x)\ar[r]\ar[d]_{\lambda}^{\cong}&
\pi_1(\mathcal M_{g,n/R},\bar x)\ar[r]\ar[d]_{\lambda'}^{\cong}&
\pi_1(\mathcal M_{g/R},\bar\eta)\ar[r]\ar[d]_{\lambda''}^{\cong}&
1\\
&
\pi_1(F(X_R,n),\bar y)\ar[r]&
\pi_1(\mathcal M_{g,n/R},\bar y)\ar[r]&
\pi_1(\mathcal M_{g/R},\bar\xi)\ar[r]&
1\\
&
\pi_1(F(C,n),\bar y)\ar[r]\ar[u]&
\pi_1(\mathcal M_{g,n/\bar k},\bar y)\ar[r]\ar[u]&
\pi_1(\mathcal M_{g/\bar k},\bar\xi)\ar[r]\ar[u]&
1.
}
\]

Applying relative pro-\(\ell\) completion, we obtain a commutative diagram
\[
\xymatrix@R=9pt@C=9pt{
1\ar[r]&
\pi_1(F(X_{\bar\eta},n),\bar x)^{\prol}\ar[r]\ar[d]^{\cong}&
\pi_1(\mathcal M_{g,n/\bar K},\bar x)^{\rel}\ar[r]\ar[d]^{\cong}&
\pi_1(\mathcal M_{g/\bar K},\bar\eta)^{\rel}\ar[r]\ar[d]^{\cong}&
1\\
1\ar[r]&
\pi_1(F(X_R,n),\bar x)^{\prol}\ar[r]\ar[d]_{\lambda}^{\cong}&
\pi_1(\mathcal M_{g,n/R},\bar x)^{\rel}\ar[r]\ar[d]_{\lambda'}^{\cong}&
\pi_1(\mathcal M_{g/R},\bar\eta)^{\rel}\ar[r]\ar[d]_{\lambda''}^{\cong}&
1\\
1\ar[r]&
\pi_1(F(X_R,n),\bar y)^{\prol}\ar[r]&
\pi_1(\mathcal M_{g,n/R},\bar y)^{\rel}\ar[r]&
\pi_1(\mathcal M_{g/R},\bar\xi)^{\rel}\ar[r]&
1\\
1\ar[r]&
\pi_1(F(C,n),\bar y)^{\prol}\ar[r]\ar[u]_{\cong}&
\pi_1(\mathcal M_{g,n/\bar k},\bar y)^{\rel}\ar[r]\ar[u]_{\cong}&
\pi_1(\mathcal M_{g/\bar k},\bar\xi)^{\rel}\ar[r]\ar[u]_{\cong}&
1.
}
\]

The left vertical maps are the specialization isomorphisms defined in
\S\ref{pro-l comp of F(C,n)}. The middle and right vertical maps are isomorphisms
by the proof of \cite[Thm.~5.6]{wat_rational_points_inp}.

It remains to justify exactness of the top row. By Lemma~\ref{lem:rho-prol-factors-rel},
the conjugation action of \(\pi_1(\mathcal M_{g,n/\bar K})\) on
\(\pi_1(F(X_{\bar\eta},n))\) induces an action of
\(\pi_1(\mathcal M_{g,n/\bar K})^{\rel}\) on
\(\pi_1(F(X_{\bar\eta},n))^{\prol}\).
Let \(W_\bullet\) denote the weight filtration on
\(\pi_1(F(X_{\bar\eta},n))^{\prol}\) constructed in \cite{NTU}.
Since the associated graded Lie algebra
\(\Gr^W_\bullet \pi_1(F(X_{\bar\eta},n))^{\prol}\) has trivial center and the
filtration \(W_\bullet\) is separated
(\cite[Cor.~2.6, p.~201]{NTU}), it follows that
\(\pi_1(F(X_{\bar\eta},n))^{\prol}\) is center-free. Consequently, the induced homomorphism
\[
\pi_1(F(X_{\bar\eta},n))^{\prol}
\longrightarrow
\pi_1(\mathcal M_{g,n/\bar K})^{\rel}
\]
is injective. This proves exactness of the top row, and hence all rows in the
diagram are exact. The proposition follows.
\end{proof}

\subsection{Homotopy exact sequences in the unordered case}

We first compare the ordered and unordered universal curves. Consider the
commutative diagram
\[
\xymatrix@C=10pt@R=10pt{
\cC_{g,n/\bar k}\ar[r]\ar[d]&\M_{g,n/\bar k}\ar[d]\\
\cC_{g,[n]/\bar k}\ar[r]&\M_{g,[n]/\bar k},
}
\]
which induces a commutative diagram of fundamental groups
\begin{equation}\label{eq:homotopy-ordered-unordered-curve}
\xymatrix@C=10pt@R=10pt{
\pi_1\!\bigl(C,\bar y\bigr) \ar[r]\ar@{=}[d]
  & \pi_1(\cC_{g,n/\bar k},\bar y) \ar[r]\ar[d]
  & \pi_1(\M_{g,n/\bar k},\bar\xi) \ar[d] \ar[r]
  & 1 \\
\pi_1\!\bigl(C,\bar y\bigr) \ar[r]
  & \pi_1(\cC_{g,[n]/\bar k},\bar y) \ar[r]
  & \pi_1(\M_{g,[n]/\bar k},\bar\xi) \ar[r]
  & 1 ,
}
\end{equation}
where the rows are exact on the right.

Since the $S_n$-action permutes the marked points but acts trivially on the
underlying curve, the local system $\mathbb H_{\mathbb Z_\ell}$ descends to
$\M_{g,[n]}$. Consequently, the representation
\[
\rho_{\mathbb Z_\ell,\bar\xi} :
\pi_1(\M_{g,n/\bar k},\bar\xi) \longrightarrow \Sp(H_{\mathbb Z_\ell})
\]
factors through $\rho_{\Zl,[\bar\xi]}:\pi_1(\M_{g,[n]/\bar k},\bar\xi)\to\Sp(H_\Zl)$.
Thus, there is a commutative diagram
\begin{equation}\label{eq:monodromy-ordered-unordered-curve}
\xymatrix@C=13pt@R=10pt{
 \pi_1(\cC_{g,n/\bar k},\bar y) \ar[r] \ar[d]
  & \pi_1(\M_{g,n/\bar k},\bar\xi) \ar[r]^-{\rho_{\Zl,\bar\xi}} \ar[d]
  & \Sp(H_{\mathbb Z_\ell}) \ar@{=}[d] \\
 \pi_1(\cC_{g,[n]/\bar k},\bar y) \ar[r]
  & \pi_1(\M_{g,[n]/\bar k},\bar\xi) \ar[r]^-{\rho_{\Zl,[\bar\xi]}}
  & \Sp(H_{\mathbb Z_\ell}).
}
\end{equation}
Since $\pi_1(C)^{\prol}$ is center-free, applying relative pro-$\ell$
completion to \eqref{eq:monodromy-ordered-unordered-curve} with respect to the
representations into $\Sp(H_{\mathbb Z_\ell})$ yields the following result.

\begin{proposition}\label{prop:ordered-unordered-homotopy-rel-prol}
For $g\ge 2$ and $n\ge 0$, the diagram~\eqref{eq:homotopy-ordered-unordered-curve} induces a commutative
diagram
\begin{equation*}
\xymatrix@C=10pt@R=10pt{
1 \ar[r] &
\pi_1(C)^{\prol} \ar[r] \ar@{=}[d] &
\pi_1(\cC_{g,n/\bar k})^{\rel} \ar[r] \ar[d] &
\pi_1(\M_{g,n/\bar k})^{\rel} \ar[r] \ar[d] &
1 \\
1 \ar[r] &
\pi_1(C)^{\prol} \ar[r] &
\pi_1(\cC_{g,[n]/\bar k})^{\rel} \ar[r] &
\pi_1(\M_{g,[n]/\bar k})^{\rel} \ar[r] &
1 ,
}
\end{equation*}
whose rows are exact.
\end{proposition}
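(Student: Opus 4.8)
The plan is to deduce the diagram by applying relative pro-$\ell$ completion functorially to the commutative square \eqref{eq:monodromy-ordered-unordered-curve}, and then to verify the exactness of the two resulting rows separately. Since the monodromy representation $\rho_{\Zl,\bar\xi}$ on $\pi_1(\M_{g,n/\bar k})$ factors through $\rho_{\Zl,[\bar\xi]}$ on $\pi_1(\M_{g,[n]/\bar k})$, and both pull back compatibly to $\pi_1(\cC_{g,n/\bar k})$ and $\pi_1(\cC_{g,[n]/\bar k})$, the universal property of relative pro-$\ell$ completion produces all four vertical homomorphisms and makes the whole diagram commute automatically. Thus the genuinely new content is the exactness of the bottom (unordered) row, the top row being the restriction over $\bar k$ of Proposition~\ref{prop:specialization-rel-prol}.

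For the right-hand exactness I would invoke the right-exactness of relative pro-$\ell$ completion. Applied to each row of \eqref{eq:homotopy-ordered-unordered-curve}, this yields surjectivity of $\pi_1(\cC_{g,[n]/\bar k})^{\rel}\to\pi_1(\M_{g,[n]/\bar k})^{\rel}$ (and of its ordered analogue) and identifies the kernel with the closed normal subgroup generated by the image of the geometric fiber group $\pi_1(C,\bar y)$. Because the monodromy representation is trivial on this fiber, the relevant relative completion of $\pi_1(C,\bar y)$ reduces to the ordinary pro-$\ell$ completion $\pi_1(C)^{\prol}$, which is therefore exactly the kernel term appearing on the left.

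The main obstacle is the remaining left-hand exactness, namely the injectivity of $\pi_1(C)^{\prol}\to\pi_1(\cC_{g,[n]/\bar k})^{\rel}$. Here I would argue exactly as in the ordered case of Proposition~\ref{prop:specialization-rel-prol}: the fiber group $\pi_1(C)^{\prol}$ is center-free for $g\ge 2$, and this center-freeness forces the natural map into the relative completion of the total-space group to be injective, via the mechanism used in the proof of \cite[Thm.~5.6]{wat_rational_points_inp}. The decisive simplification in the unordered setting is that the $S_n$-action permutes only the marked points and leaves the underlying curve unchanged; hence the geometric fiber, and therefore $\pi_1(C)^{\prol}$, is literally the same object as in the ordered row, with the left vertical arrow the identity. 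Consequently the injectivity proven for the ordered row transports verbatim to the unordered row, and combining this with the right-hand exactness and the automatic commutativity completes the argument.
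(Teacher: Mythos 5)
Your proposal is correct and takes essentially the same approach as the paper: the paper likewise obtains the vertical maps and commutativity from the universal property applied to the monodromy-compatible diagram~\eqref{eq:monodromy-ordered-unordered-curve}, gets exactness at the middle and right from right-exactness of relative pro-$\ell$ completion, and deduces injectivity of $\pi_1(C)^{\prol}\to\pi_1(\cC_{g,[n]/\bar k})^{\rel}$ from the center-freeness of $\pi_1(C)^{\prol}$ via the mechanism of \cite[Thm.~5.6]{wat_rational_points_inp}. Indeed, the paper states the proposition with no separate proof, treating it as an immediate consequence of exactly this discussion.
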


\textcolor{black}{We next compare the ordered and unordered configuration spaces.}
Consider the commutative diagram
\[
\xymatrix@C=10pt@R=10pt{
\mathcal M_{g,n/\bar k} \ar[r]\ar[d]
  & \mathcal M_{g/\bar k} \ar@{=}[d] \\
\mathcal M_{g,[n]/\bar k} \ar[r]
  & \mathcal M_{g/\bar k},
}
\]
which induces a commutative diagram of fundamental groups
\begin{equation}\label{eq:pi1-ordered-unordered}
\xymatrix@C=10pt@R=10pt{
\pi_1\!\bigl(F(C,n),\bar y\bigr) \ar[r]\ar[d]
  & \pi_1(\mathcal M_{g,n/\bar k},\bar y) \ar[r]\ar[d]
  & \pi_1(\mathcal M_{g/\bar k},\bar\xi) \ar@{=}[d] \ar[r]
  & 1 \\
\pi_1\!\bigl(F(C,[n]),\bar y\bigr) \ar[r]
  & \pi_1(\mathcal M_{g,[n]/\bar k},\bar y) \ar[r]
  & \pi_1(\mathcal M_{g/\bar k},\bar\xi) \ar[r]
  & 1 ,
}
\end{equation}
whose rows are exact on the right. In what follows, we suppress basepoint
notation for fundamental groups whenever it is clear from the context.

\begin{proposition}\label{prop:ordered-unordered-rel-prol}
Suppose $g\ge 2$ and $n\ge 1$. Let $\ell\neq p$ be an odd prime.
There is a commutative diagram
\[
\xymatrix@R=9pt@C=9pt{
1 \ar[r] &
\pi_1\!\bigl(F(C,n)\bigr)^{\prol} \ar[r] \ar[d] &
\pi_1(\mathcal M_{g,n/\bar k})^{\rel} \ar[r]\ar[d] &
\pi_1(\mathcal M_{g/\bar k})^{\rel} \ar[r] \ar@{=}[d] &
1 \\
&
\pi_1\!\bigl(F(C,[n])\bigr)^{\prol} \ar[r] &
\pi_1(\mathcal M_{g,[n]/\bar k})^{\rel} \ar[r] &
\pi_1(\mathcal M_{g/\bar k})^{\rel} \ar[r] &
1 ,
}
\]
whose rows are exact.
\end{proposition}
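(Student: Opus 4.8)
The plan is to produce both rows by applying relative pro-$\ell$ completion to the commutative diagram \eqref{eq:pi1-ordered-unordered} of profinite fundamental groups, following the pattern of the ordered case. The top row is already supplied by Proposition~\ref{prop:ordered-specialization-rel-prol}, so the work lies in the bottom row and in the comparison between the two. The key structural point is that the monodromy representation $\pi_1(\M_{g,[n]/\bar k}) \to \Sp(H_{\Z_\ell})$ is pulled back from $\pi_1(\M_{g/\bar k})$, since the local system $\mathbb H_{\Z_\ell}$ depends only on the underlying curve and not on the (unordered) marked points. Consequently its restriction to the fiber group $\pi_1(F(C,[n]))$ is trivial, so the relative pro-$\ell$ completion of the fiber coincides with its ordinary pro-$\ell$ completion $\pi_1(F(C,[n]))^{\prol}$, while the completion of the base $\pi_1(\M_{g/\bar k})^{\rel}$ is the one already occurring in the top row. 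Applying the functor of relative pro-$\ell$ completion to the two vertical $S_n$-quotient maps of \eqref{eq:pi1-ordered-unordered} then yields the three vertical arrows together with the commutativity of the diagram; the rightmost vertical map is the completion of the identity of $\pi_1(\M_{g/\bar k})$, hence again the identity.

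For the bottom row itself, right-exactness of relative pro-$\ell$ completion, applied to the right-exact sequence $\pi_1(F(C,[n])) \to \pi_1(\M_{g,[n]/\bar k}) \to \pi_1(\M_{g/\bar k}) \to 1$, immediately gives surjectivity onto $\pi_1(\M_{g/\bar k})^{\rel}$ and exactness at the middle term. Exactly as in the proofs of Propositions~\ref{prop:specialization-rel-prol} and~\ref{prop:ordered-specialization-rel-prol}, the remaining point is left-exactness, namely injectivity of $\pi_1(F(C,[n]))^{\prol} \to \pi_1(\M_{g,[n]/\bar k})^{\rel}$, and this reduces to the assertion that the fiber group $\pi_1(F(C,[n]))^{\prol}$ is center-free. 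For the top row the corresponding fiber $\pi_1(F(C,n))^{\prol}$ was already shown to be center-free in the course of Proposition~\ref{prop:ordered-specialization-rel-prol}. Unlike the universal-curve comparison of Proposition~\ref{prop:ordered-unordered-homotopy-rel-prol}, where the fiber $\pi_1(C)^{\prol}$ is literally the same in both rows, here the fiber genuinely changes upon passing from ordered to unordered configurations; establishing center-freeness of $\pi_1(F(C,[n]))^{\prol}$ is therefore the main obstacle.

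To overcome it I would argue by specialization, as in \S\ref{pro-l comp of F(C,n)}. The Fulton--MacPherson compactification is $S_n$-equivariant, so the specialization isomorphism descends to the quotient and identifies $\pi_1(F(C,[n]))^{\prol}$ with the pro-$\ell$ completion of the topological unordered surface braid group $\pi_1^{\top}(F(C^{\an},[n]))$. Here the hypothesis that $\ell$ is odd enters decisively: since $S_n$ admits no nontrivial $\ell$-group quotient for odd $\ell$, the maximal pro-$\ell$ quotient $S_n^{\prol}$ is trivial, and therefore the extension $1 \to \pi_1(F(C,n)) \to \pi_1(F(C,[n])) \to S_n \to 1$ induces a surjection $\pi_1(F(C,n))^{\prol} \twoheadrightarrow \pi_1(F(C,[n]))^{\prol}$ (consistent with the homology computation $H_1(\p_{g,[n]})\cong (H_{\Q_\ell}^{\oplus n})_{S_n}$ of Proposition~\ref{prop:H1-pgn-quot}). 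The weight filtration of \cite{NTU} on $\pi_1(F(X_{\bar\eta},n))^{\prol}$ is canonical, hence $S_n$-equivariant, so it descends along this surjection to a separated weight filtration on $\pi_1(F(C,[n]))^{\prol}$. One then verifies, as in \cite{LW25}, that the associated graded Lie algebra has trivial center; since the filtration is separated, this forces $\pi_1(F(C,[n]))^{\prol}$ to be center-free. With center-freeness in hand the bottom row is short exact, and the functoriality established above completes the commutative diagram.
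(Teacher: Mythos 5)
Two separate issues, one about what the statement actually says and one about your argument for the part it does not say.

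First, note that in the proposition the bottom row carries no ``$1$'' on the left: the claim is only that the bottom row is exact at $\pi_1(\M_{g,[n]/\bar k})^{\rel}$ and at $\pi_1(\M_{g/\bar k})^{\rel}$; injectivity of $\pi_1(F(C,[n]))^{\prol}\to\pi_1(\M_{g,[n]/\bar k})^{\rel}$ is never asserted. For the parts that are asserted, your argument is correct and essentially the paper's: apply relative pro-$\ell$ completion functorially to \eqref{eq:pi1-ordered-unordered}, quote Proposition~\ref{prop:ordered-specialization-rel-prol} for the top row, and invoke right exactness for the bottom row. The only real difference is the route to exactness at the middle term. You apply right exactness directly to the right-exact sequence $\pi_1(F(C,[n]))\to\pi_1(\M_{g,[n]/\bar k})\to\pi_1(\M_{g/\bar k})\to 1$ (legitimate, after replacing the fiber group by its image so that the cited right-exactness statement for extensions applies), whereas the paper first proves that $\pi_1(\M_{g,n/\bar k})^{\rel}\to\pi_1(\M_{g,[n]/\bar k})^{\rel}$ is surjective --- this is where it uses $S_n^{\prol}=1$ for odd $\ell$ --- and then obtains middle exactness of the bottom row by a diagram chase through the exact top row. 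Both routes work; yours arguably needs the odd-$\ell$ hypothesis only in the part of your proposal that is not required.

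Second, the bulk of your proposal is devoted to left exactness of the bottom row via center-freeness of $\pi_1(F(C,[n]))^{\prol}$, and there the argument has a genuine gap. The surjection $\pi_1(F(C,n))^{\prol}\twoheadrightarrow\pi_1(F(C,[n]))^{\prol}$ is correct for odd $\ell$ (Frattini argument), and separatedness of the quotient filtration can indeed be rescued by compactness of fibers; but the decisive step --- that the associated graded Lie algebra of the \emph{quotient} has trivial center --- does not follow from the corresponding statement in the ordered case, and ``one then verifies, as in \cite{LW25}'' is not a proof. A quotient of a graded Lie algebra with trivial center can perfectly well have a nontrivial center (its abelianization is the extreme example), and here the quotient is drastic: by Proposition~\ref{prop:H1-pgn-quot} the graded of $\p_{g,[n]}$ is generated in weight $-1$ by the $2g$-dimensional space $H_{\Q_\ell}$ rather than $H_{\Q_\ell}^{\oplus n}$, so center-freeness is a genuinely new assertion requiring its own argument. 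This is precisely the statement the paper avoids having to prove, which is why its bottom row is written without the left-hand ``$1$''. A further point needing justification: your ``$S_n$-equivariant Fulton--MacPherson'' specialization is not immediate, since the $S_n$-action on the compactification is not free along the boundary, and the scheme-theoretic quotient is no longer smooth, so the specialization theorem of \cite{SGA1} does not apply verbatim. None of this affects the proposition as stated, but if you want the stronger left-exact statement you need a genuinely new argument, not a transport of the ordered one.
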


\begin{proof}
Consider the exact sequence
\[
1 \longrightarrow \pi_1(\M_{g,n/\bar k})
\longrightarrow \pi_1(\M_{g,[n]/\bar k})
\longrightarrow S_n \longrightarrow 1,
\]
which fits into the commutative diagram
\begin{equation}\label{diag:ordered-unordered-monodromy}
\xymatrix@C=10pt@R=12pt{
1 \ar[r] &
\pi_1(\M_{g,n/\bar k}) \ar[r] \ar[d]^{\rho_{\mathbb Z_\ell, \bar\xi}} &
\pi_1(\M_{g,[n]/\bar k}) \ar[r] \ar[d]^{\rho_{\mathbb Z_\ell, [\bar\xi]}} &
S_n \ar[r] \ar[d] &
1 \\
1 \ar[r] &
\Sp(H_{\mathbb Z_\ell}) \ar@{=} [r] &
\Sp(H_{\mathbb Z_\ell}) \ar[r] &
1 &
}
\end{equation}
For \(n \ge 2\) and \(\ell\) an odd prime, the pro-\(\ell\) completion of \(S_n\)
is trivial. Since relative pro-\(\ell\) completion is right exact
\cite[Prop.~2.4]{HainMatsumoto2009}, applying it to
\eqref{diag:ordered-unordered-monodromy} yields an exact sequence
\[
\pi_1(\M_{g,n/\bar k})^{\rel}
\longrightarrow
\pi_1(\M_{g,[n]/\bar k})^{\rel}
\longrightarrow
S_n^{\prol} \cong 1.
\]
Hence the induced map
\(
\pi_1(\M_{g,n/\bar k})^{\rel}
\to
\pi_1(\M_{g,[n]/\bar k})^{\rel}
\)
is surjective.

Applying relative pro-\(\ell\) completion to \eqref{eq:pi1-ordered-unordered}, we
obtain a commutative diagram
\[
\xymatrix@R=9pt@C=9pt{
1 \ar[r] &
\pi_1\!\bigl(F(C,n)\bigr)^{\prol} \ar[r] \ar[d] &
\pi_1(\mathcal M_{g,n/\bar k})^{\rel} \ar[r] \ar[d] &
\pi_1(\mathcal M_{g/\bar k})^{\rel} \ar[r] \ar@{=}[d] &
1 \\
&
\pi_1\!\bigl(F(C,[n])\bigr)^{\prol} \ar[r] &
\pi_1(\mathcal M_{g,[n]/\bar k})^{\rel} \ar[r] &
\pi_1(\mathcal M_{g/\bar k})^{\rel} \ar[r] &
1 .
}
\]
The first row is exact by Proposition~\ref{prop:ordered-specialization-rel-prol}.
By right exactness of the completion, the second row is exact on the right.
Since the middle vertical map is surjective, the second row is exact in the
middle as well.
\end{proof}

\section{Continuous Relative Completion}\label{cont rel comp}
The continuous relative completion is the key tool used in the prequel to this
paper~\cite{LW25}. It is the profinite analogue of the relative completion of a
discrete group. Detailed introductions to the definition and properties of
relative completion can be found in
\cite{hain_completion, hain_hodge_rel, hain_relwtfilt}.

Let $\G$ be a profinite group, let $R$ be a reductive group over $\Q_\ell$, and
let $\rho:\G\to R(\Q_\ell)$ be a continuous homomorphism with Zariski-dense
image. The continuous relative completion of $\G$ with respect to $\rho$ is a
pro-algebraic $\Q_\ell$-group $\cG$, together with a continuous Zariski-dense
homomorphism $\tilde\rho:\G\to\cG(\Q_\ell)$ lifting $\rho$, such that $\cG$ fits
into an exact sequence
\[
1 \longrightarrow \U \longrightarrow \cG \longrightarrow R \longrightarrow 1,
\]
where $\U$ is a prounipotent $\Q_\ell$-group.

The group $\cG$ is characterized by the following universal property: if $G$ is
a pro-algebraic $\Q_\ell$-group that is an extension of $R$ by a prounipotent
$\Q_\ell$-group and if $\psi:\G\to G(\Q_\ell)$ is a continuous homomorphism
lifting $\rho$, then there exists a unique morphism of $\Q_\ell$-groups
$\phi:\cG\to G$ such that the diagram
\[
\xymatrix{
\G \ar[r]^{\tilde\rho} \ar[d]_{\psi} &
\cG(\Q_\ell) \ar[d]\ar[ld]_{\phi(\Q_\ell)} \\
G(\Q_\ell) \ar[r] &
R(\Q_\ell)
}
\]
commutes.
\subsection{Continuous relative completion of
$\pi_1(\cC_{g,n/\bar\Q}) \to \pi_1(\M_{g,n/\bar\Q})$}

We briefly recall the structure of the continuous relative completions of
$\pi_1(\M_{g,n/\bar\Q})$ and $\pi_1(\cC_{g,n/\bar\Q})$ associated with the
universal curve, following the framework of Hain and Matsumoto.

Consider the $\ell$-adic local system
\[
\mathbb H_{\Q_\ell} := \mathbb H_{\mathbb Z_\ell} \otimes \Q_\ell
\]
on $\mathcal M_{g,n/\mathbb Z}$.
Let $\bar\zeta$ be a geometric point of $\M_{g,n/\bar\Q}$, and let $\bar a$ be a
geometric point of $\cC_{g,n/\bar\Q}$ lying over~$\bar\zeta$.
Write $H_{\Q_\ell}$ for the fiber of $\mathbb H_{\Q_\ell}$ over~$\bar\zeta$.

The associated monodromy representation
\[
\rho_{\bar\zeta} :
\pi_1(\M_{g,n/\bar\Q},\bar \zeta) \longrightarrow \Sp(H_{\Q_\ell})
\]
is continuous with Zariski-dense image.
Composing $\rho_{\bar\zeta}$ with the homomorphism induced by the universal curve,
$\pi_1(\cC_{g,n/\bar\Q},\bar a)
\longrightarrow
\pi_1(\M_{g,n/\bar\Q},\bar \zeta)$,
we obtain a representation
\[
\rho_{\cC,\bar\zeta} :
\pi_1(\cC_{g,n/\bar\Q},\bar a)
\longrightarrow
\Sp(H_{\Q_\ell}),
\]
which corresponds to the pullback of the local system $\mathbb H_{\Q_\ell}$
along the universal curve.

Denote by $\cG_{g,n}$ and $\cG_{\cC_{g,n}}$ the continuous relative completions of
$\pi_1(\M_{g,n/\bar\Q})$ and $\pi_1(\cC_{g,n/\bar\Q})$ with respect to
$\rho_{\bar\zeta}$ and $\rho_{\cC,\bar\zeta}$, respectively.
Let $\U_{g,n}$ and $\U_{\cC_{g,n}}$ be their unipotent radicals.
We write
\[
\g_{g,n},\ \g_{\cC_{g,n}},\ \u_{g,n},\ \u_{\cC_{g,n}}
\]
for the Lie algebras of
$\cG_{g,n}$, $\cG_{\cC_{g,n}}$, $\U_{g,n}$, and $\U_{\cC_{g,n}}$, respectively.

\textcolor{black}{These Lie algebras carry natural weight filtrations, inherited from
the corresponding relative completions over~$\Q$ constructed using Hodge-theoretic
methods.}
\textcolor{black}{We use only the induced structural properties of these
filtrations}
(cf.~\cite[Thm.~13.1]{hain_hodge_rel}).

\subsection{The relative completion and relative pro-$\ell$ completion}
Suppose that $\Gamma$ is a profinite group and let
\[
\rho_{\mathbb Z_\ell} : \Gamma \longrightarrow R(\mathbb Z_\ell)
\]
be a continuous homomorphism whose extension
\[
\rho_{\mathbb Q_\ell} : \Gamma \longrightarrow R(\mathbb Q_\ell)
\]
has Zariski--dense image. Let \textcolor{black}{$\G^\rel$} denote the relative
pro-$\ell$ completion of $\Gamma$ with respect to $\rho_\Zl$, and write
\[
\rho^{\rel}_{\mathbb Z_\ell} : \Gamma^{\rel}
\longrightarrow R(\mathbb Z_\ell)
\]
for the induced homomorphism. Since
$\rho(\Gamma) \subseteq \rho^{\rel}_\Zl(\Gamma^{\rel})$,
the induced map
\[
\rho^\rel_{\Ql}:\Gamma^{\rel} \longrightarrow R(\mathbb Q_\ell)
\]
also has Zariski--dense image.
The following observation is then an immediate
consequence of the universal property.
\begin{proposition}\label{prop:compatibility-relative-completions}
The continuous relative completion of $\Gamma^{\rel}$ with respect to
the representation $\rho^{\rel}_\Ql$ is canonically isomorphic to the
continuous relative completion of $\Gamma$ with respect to $\rho_\Ql$.
\end{proposition}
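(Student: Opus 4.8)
The plan is to check that the two continuous relative completions corepresent the same functor and then conclude by Yoneda's lemma. Write $\cG$ for the continuous relative completion of $\Gamma$ with respect to $\rho_\Ql$ and $\cG'$ for that of $\Gamma^\rel$ with respect to $\rho^\rel_\Ql$; since $\rho_\Ql$ and $\rho^\rel_\Ql$ land in the same $R(\Ql)$ with Zariski-dense image, both $\cG$ and $\cG'$ are extensions of the same reductive group $R$ by prounipotent radicals. Fix a pro-algebraic $\Q_\ell$-group $G$ fitting into an exact sequence $1 \to \U_G \to G \to R \to 1$ with $\U_G$ prounipotent. By the universal property of continuous relative completion, it suffices to produce a bijection, natural in $G$, between continuous homomorphisms $\Gamma \to G(\Ql)$ lifting $\rho_\Ql$ and continuous homomorphisms $\Gamma^\rel \to G(\Ql)$ lifting $\rho^\rel_\Ql$. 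One direction is formal: restriction along the canonical dense map $\Gamma \to \Gamma^\rel$ sends a lifting of $\rho^\rel_\Ql$ to a lifting of $\rho_\Ql$, and it is injective because $\Gamma$ has dense image in $\Gamma^\rel$ while $G(\Ql)$ is Hausdorff.

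The substantive direction is to show that every continuous $\psi\colon \Gamma \to G(\Ql)$ lifting $\rho_\Ql$ factors through $\Gamma^\rel$. First I would form the closure $\tilde G := \overline{\psi(\Gamma)} \subseteq G(\Ql)$; since $\Gamma$ is profinite, hence compact, and $G(\Ql)$ is a totally disconnected Hausdorff topological group, $\tilde G$ is profinite. Composing with the projection $G \to R$ realizes $\tilde G$ as an extension
\[
1 \longrightarrow \tilde G \cap \U_G(\Ql) \longrightarrow \tilde G \longrightarrow P \longrightarrow 1,
\]
where $P := \overline{\rho_{\Z_\ell}(\Gamma)} \subseteq R(\Z_\ell)$ is the profinite group appearing in the relative pro-$\ell$ completion of $\Gamma$. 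The kernel $\tilde G \cap \U_G(\Ql)$ is a compact subgroup of the prounipotent group $\U_G(\Ql)$ and is therefore pro-$\ell$. Thus $\tilde G \to P$ has pro-$\ell$ kernel and $\psi$ lifts $\rho_{\Z_\ell}$, so the universal property of the relative pro-$\ell$ completion furnishes a unique continuous homomorphism $\Gamma^\rel \to \tilde G \hookrightarrow G(\Ql)$ through which $\psi$ factors; by construction it lifts $\rho^\rel_\Ql$. Density of $\Gamma$ in $\Gamma^\rel$ shows that this assignment is inverse to the restriction map of the previous paragraph, and naturality in $G$ is immediate.

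The only non-formal ingredient — and hence the step I would view as the heart of the argument — is the fact that compact subgroups of a prounipotent $\Q_\ell$-group are pro-$\ell$, already used in the excerpt. This is precisely what supplies the pro-$\ell$ kernel required by the relative pro-$\ell$ completion, and so what guarantees that interposing $\Gamma^\rel$ between $\Gamma$ and its continuous relative completion loses no information detectable by extensions of $R$ with prounipotent kernel. Granting the natural bijection of lifting sets, Yoneda's lemma yields a canonical isomorphism $\cG \cong \cG'$, compatible with the structural projections to $R$ and with the canonical maps from $\Gamma$ and $\Gamma^\rel$.
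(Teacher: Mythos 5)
Your proposal is correct and rests on exactly the same key ingredient as the paper's proof: the image (or closure of the image) of $\Gamma$ in an extension of $R$ by a prounipotent group is profinite with pro-$\ell$ kernel over $P$, because compact subgroups of prounipotent $\Q_\ell$-groups are pro-$\ell$, so every continuous lifting of $\rho_\Ql$ factors through $\Gamma^{\rel}$ by the universal property of relative pro-$\ell$ completion. The only difference is presentational: you apply this factoring to an arbitrary test group $G$ and conclude via a natural isomorphism of lifting functors and Yoneda, whereas the paper applies it only to $G=\cG$ itself and exhibits the mutually inverse morphisms $\phi:\cG\to\cG^{\rel}$ and $\psi:\cG^{\rel}\to\cG$ directly from the two universal properties.
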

\begin{proof}
Let $\cG$ and $\cG^{\rel}$ be the continuous relative completions of $\G$ and $\G^{\rel}$ with respect to $\rho_\Ql$ and $\rho^{\rel}_\Ql$, respectively.
The surjection $\G\to \G^{\rel}$ induces a unique surjective morphism 
$\phi:\cG\to \cG^{\rel}$.
Let $\bar \G$ be the image of $\G$ in $\cG$. Then $\bar \G$ is an extension of the image of $\rho_{\Zl}$ by a pro-$\ell$ group:
\[
\xymatrix@C=10pt@R=10pt{
1 \ar[r] 
  & \ker\rho_{\Zl} \ar[r] \ar@^{(->}[d] 
  & \bar \G \ar[r] \ar@^{(->}[d] 
  & \im\rho_{\Zl} \ar[r] \ar@^{(->}[d] 
  & 1 \\
1 \ar[r] 
  & U(\Q_\ell) \ar[r] 
  & \cG(\Q_\ell) \ar[r] 
  & R(\Q_\ell) \ar[r] 
  & 1.
}
\]
Since every compact subgroup of $U(\Q_\ell)$ is pro-$\ell$, the kernel
$\ker(\rho_{\mathbb Z_\ell})$ is a pro-$\ell$ group. Hence the map
$\G \to \bar{\G}$ factors through $\G^{\rel}$. Since the diagram
$$
\xymatrix@C=10pt@R=10pt{
\G^{\rel}\ar[dr]^{\rho^{\rel}_\Ql}\ar[d]\\
\cG(\Ql)\ar[r]&R(\Ql)
}
$$ commutes, there exists a unique surjective morphism 
$\psi:\cG^{\rel}\to \cG$. By the universal property of relative completion, $\phi$ and $\psi$ are mutually inverse, and therefore give a canonical isomorphism.  
\end{proof}

\subsection{Continuous relative completion in the ordered case}
 Consider the $\ell$-adic local system
$\mathbb H_{\Q_\ell}:=\mathbb H_{\mathbb Z_\ell}\otimes\Q_\ell$
on $\mathcal M_{g,n/\mathbb Z}$. With the notation of
\S\ref{subsec:homotopy-rel-prol}, write $H_{\Q_\ell,\bar\xi}$
(resp.\ $H_{\Q_\ell,\bar\eta}$) for its fiber over $\bar\xi$
(resp.\ $\bar\eta$). This yields symplectic representations
\[
\rho_{\Q_\ell,\bar\xi} :
\pi_1(\mathcal M_{g,n/\bar k},\bar\xi)
\longrightarrow
\Sp(H_{\Q_\ell,\bar\xi}),
\qquad
\rho_{\Q_\ell,\bar\eta} :
\pi_1(\mathcal M_{g,n/\bar K},\bar\eta)
\longrightarrow
\Sp(H_{\Q_\ell,\bar\eta}).
\]
Passing to relative pro-$\ell$ completions, we obtain the induced
representations
\[
\rho^{\rel}_{\Q_\ell,\bar\xi} :
\pi_1(\mathcal M_{g,n/\bar k},\bar\xi)^{\rel}
\longrightarrow
\Sp(H_{\Q_\ell,\bar\xi}),
\quad
\rho^{\rel}_{\Q_\ell,\bar\eta} :
\pi_1(\mathcal M_{g,n/\bar K},\bar\eta)^{\rel}
\longrightarrow
\Sp(H_{\Q_\ell,\bar\eta}).
\]

Denote by $\cG_{g,n,\bar\xi}$ (resp.\ $\cG_{g,n,\bar\eta}$) the continuous relative
completion of $\pi_1(\M_{g,n/\bar k},\bar\xi)$
(resp.\ $\pi_1(\M_{g,n/\bar K},\bar\eta)$) with respect to
$\rho_{\Q_\ell,\bar\xi}$ (resp.\ $\rho_{\Q_\ell,\bar\eta}$)
Similarly, denote by $\cG_{\cC_{g,n},\bar\xi}$
(resp.\ $\cG_{\cC_{g,n},\bar\eta}$) the continuous relative completion of
$\pi_1(\cC_{g,n/\bar k},\bar\xi)$
(resp.\ $\pi_1(\cC_{g,n/\bar K},\bar\eta)$) with respect to the induced
representations into $\Sp(H_{\Q_\ell,\bar\xi})$
(resp.\ $\Sp(H_{\Q_\ell,\bar\eta})$).

\begin{proposition}\label{prop:basechange-rel-completions}
There are natural isomorphisms of pro-algebraic $\Q_\ell$-groups
\[
\cG_{g,n,\bar\eta} \;\cong\; \cG_{g,n,\bar\xi}\;\cong\; \cG_{g,n}
\]
and 
\[
\cG_{\cC_{g,n},\bar\eta}\cong \textcolor{black}{\cG_{\cC_{g,n},\bar\xi}}\cong \cG_{\cC_{g,n}}.
\]
\end{proposition}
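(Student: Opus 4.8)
The plan is to deduce the six claimed isomorphisms by combining the specialization isomorphism on relative pro-$\ell$ completions (Proposition~\ref{prop:specialization-rel-prol}) with the compatibility between relative pro-$\ell$ and continuous relative completion (Proposition~\ref{prop:compatibility-relative-completions}), and then to eliminate the dependence on the particular characteristic-zero field $\bar K$ via invariance of the geometric fundamental group under extension of algebraically closed fields. The guiding principle is that continuous relative completion is a functor of the pair consisting of a profinite group together with a Zariski-dense representation into the fixed reductive group $\Sp(H_{\Q_\ell})$, so an isomorphism of such pairs induces a canonical isomorphism of completions.

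First I would record that Proposition~\ref{prop:specialization-rel-prol} supplies specialization isomorphisms
\[
\pi_1(\M_{g,n/\bar K},\bar\eta)^{\rel}\xrightarrow{\;\cong\;}\pi_1(\M_{g,n/\bar k},\bar\xi)^{\rel},
\qquad
\pi_1(\cC_{g,n/\bar K},\bar x)^{\rel}\xrightarrow{\;\cong\;}\pi_1(\cC_{g,n/\bar k},\bar y)^{\rel}.
\]
The essential point is that these are compatible with the induced symplectic representations $\rho^{\rel}_{\Q_\ell,\bar\eta}$ and $\rho^{\rel}_{\Q_\ell,\bar\xi}$: both arise as restrictions of the monodromy of the single local system $\mathbb H_{\Z_\ell}$ on $\M_{g,n/R}$, and the specialization identifies the fibers $H_{\Z_\ell,\bar\eta}\cong H_{\Z_\ell,\bar\xi}$ as symplectic $\Z_\ell$-modules. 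Hence the displayed maps intertwine the two representations into a common $\Sp(H_{\Z_\ell})$, and likewise after $\otimes\Q_\ell$; for the universal curve the representation factors through $\M_{g,n}$, so compatibility is inherited.

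Next I would apply Proposition~\ref{prop:compatibility-relative-completions} to each of the four groups, identifying the continuous relative completion of $\pi_1(\M_{g,n/\bar k})$ with respect to $\rho_{\Q_\ell,\bar\xi}$ with that of its relative pro-$\ell$ completion $\pi_1(\M_{g,n/\bar k})^{\rel}$ with respect to $\rho^{\rel}_{\Q_\ell,\bar\xi}$, and similarly over $\bar K$ and for $\cC_{g,n}$. By the functoriality just noted, the representation-compatible isomorphisms of the previous step then descend to isomorphisms of continuous relative completions, giving
\[
\cG_{g,n,\bar\eta}\cong\cG_{g,n,\bar\xi},
\qquad
\cG_{\cC_{g,n},\bar\eta}\cong\cG_{\cC_{g,n},\bar\xi}.
\]
Finally, to identify the characteristic-zero completions with $\cG_{g,n}$ and $\cG_{\cC_{g,n}}$, I would use that $\M_{g,n}$ and $\cC_{g,n}$ are smooth and geometrically connected over $\Q$ and that $\bar\Q$ embeds into $\bar K$. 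Invariance of the geometric étale fundamental group under extension of an algebraically closed base field in characteristic zero yields $\pi_1(\M_{g,n/\bar K},\bar\eta)\cong\pi_1(\M_{g,n/\bar\Q},\bar\zeta)$ and its analogue for $\cC_{g,n}$; these are compatible with monodromy because $\mathbb H_{\Z_\ell}$ is already defined over $\Z$. Functoriality of continuous relative completion once more gives $\cG_{g,n,\bar\eta}\cong\cG_{g,n}$ and $\cG_{\cC_{g,n},\bar\eta}\cong\cG_{\cC_{g,n}}$, completing the chain.

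I expect the main obstacle to be the bookkeeping of representation compatibility at each stage: since continuous relative completion depends on the representation and not only on the group, one must verify that every isomorphism in sight commutes with the maps to $\Sp(H_{\Q_\ell})$ before invoking functoriality. The compatibility for specialization is built into the model $\M_{g,n/R}$ through the single local system $\mathbb H_{\Z_\ell}$, while for the base-field change it follows from $\mathbb H_{\Z_\ell}$ being defined over $\Z$; both are conceptually routine but must be made explicit so that the universal property of continuous relative completion produces canonical, and not merely abstract, isomorphisms.
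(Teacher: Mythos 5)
Your proposal is correct and follows essentially the same route as the paper: specialization isomorphisms on relative pro-$\ell$ completions (Proposition~\ref{prop:specialization-rel-prol}) checked to be compatible with the symplectic representations, then Proposition~\ref{prop:compatibility-relative-completions} plus the universal property to descend to continuous relative completions, and finally invariance of the \'etale fundamental group under extension of algebraically closed fields of characteristic zero via an embedding $\bar\Q\hookrightarrow\bar K$. The only cosmetic difference is that the paper verifies representation-compatibility through Lemma~\ref{lem:H1-duality} and the conjugation action on $\pi_1(C_{\bar\xi})^{\prol}$, whereas you phrase it through the single local system $\mathbb H_{\Z_\ell}$ on $\M_{g,n/R}$; these are the same observation.
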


\begin{proof}
By Proposition~\ref{prop:specialization-rel-prol} and
Lemma~\ref{lem:H1-duality}, the representations
$\rho^{\rel}_{\Q_\ell,\bar\xi}$ and $\rho^{\rel}_{\Q_\ell,\bar\eta}$, when
composed with the natural morphisms
\[
\pi_1(\cC_{g,n/\bar k})^{\rel}\to \pi_1(\M_{g,n/\bar k})^{\rel},
\qquad
\pi_1(\cC_{g,n/\bar K})^{\rel}\to \pi_1(\M_{g,n/\bar K})^{\rel},
\]
induce, on $H_1$, the actions arising from conjugation on
$\pi_1(C_{\bar\xi})^{\prol}$ and $\pi_1(C_{\bar\eta})^{\prol}$, respectively.
Consequently, the specialization isomorphism yields a commutative diagram
\[
\xymatrix@C=10pt@R=12pt{
\pi_1(\cC_{g,n/\bar K},\bar\eta)^{\rel} \ar[r]^{\cong}
  \ar[d]
&
\pi_1(\cC_{g,n/\bar k},\bar\xi)^{\rel} \ar[d]\\
\pi_1(\M_{g,n/\bar K},\bar\eta)^{\rel} \ar[r]^{\cong}
  \ar[d]_{\rho^{\rel}_{\Q_\ell,\bar\eta}}
&
\pi_1(\M_{g,n/\bar k},\bar\xi)^{\rel} \ar[d]^{\rho^{\rel}_{\Q_\ell,\bar\xi}}\\
\Sp(H_{\Q_\ell,\bar\eta}) \ar[r]^{\cong}
&
\Sp(H_{\Q_\ell,\bar\xi}),
}
\]
where the bottom isomorphism is induced by
$\pi_1(C_{\bar\eta})^{\prol}\cong\pi_1(C_{\bar\xi})^{\prol}$.
Taking the continuous relative completion of the above diagram and using
Proposition~\ref{prop:compatibility-relative-completions}, which identifies the
resulting completion with the continuous relative completion taken with respect
to the original representations $\rho_{\Q_\ell,\bar\eta}$ and
$\rho_{\Q_\ell,\bar\xi}$, we obtain natural isomorphisms
\[
\cG_{g,n,\bar\eta} \;\cong\; \cG_{g,n,\bar\xi}
\quad\text{and}\quad
\cG_{\cC_{g,n},\bar\eta} \;\cong\; \cG_{\cC_{g,n},\bar\xi}.
\]
After fixing an embedding $\overline{\Q}\hookrightarrow\bar K$, invariance of
the étale fundamental group under extension of algebraically closed fields of
characteristic~$0$ gives canonical isomorphisms
\[
\pi_1(\M_{g,n/\bar K}) \;\xrightarrow{\;\sim\;}\;
\pi_1(\M_{g,n/\overline{\Q}})
\quad\text{and}\quad
\pi_1(\cC_{g,n/\bar K}) \;\xrightarrow{\;\sim\;}\;
\pi_1(\cC_{g,n/\overline{\Q}}).
\]
Applying continuous relative completion, we obtain canonical isomorphisms
\[
\cG_{g,n,\bar\eta} \;\cong\; \cG_{g,n}
\quad\text{and}\quad
 \cG_{\cC_{g,n},\bar\eta}\;\cong\; \cG_{\cC_{g,n}}.
\]

\end{proof}
\subsection{\textcolor{black}{Continuous relative completion in the unordered case}}

\textcolor{black}{We retain the notation from the ordered case and adapt it to the unordered setting.}
For simplicity, set $H_{\Q_\ell}:=H_{\Q_\ell,\bar\xi}$. The $\ell$-adic representation
$\rho_{\Q_\ell,\bar\xi} :
\pi_1(\M_{g,n/\bar k},\bar\xi)\longrightarrow \Sp(H_{\Q_\ell})$
factors through the natural homomorphism
$\pi_1(\M_{g,n/\bar k}) \longrightarrow \pi_1(\M_{g,[n]/\bar k})$,
and hence induces a representation
\[
\rho_{\Q_\ell,[\bar\xi]} :
\pi_1(\M_{g,[n]/\bar k},\bar\xi)\longrightarrow \Sp(H_{\Q_\ell}).
\]
Let
\[
\rho_{\cC_{\Q_\ell},[\bar\xi]} :
\pi_1(\cC_{g,[n]/\bar k},\bar x)
\longrightarrow
\pi_1(\M_{g,[n]/\bar k},\bar\xi)
\longrightarrow
\Sp(H_{\Q_\ell})
\]
\textcolor{black}{denote the induced representation obtained by pullback along the universal curve.}

Denote by $\cG_{g,[n],\bar\xi}$ and $\cG_{\cC_{g,[n]},\bar\xi}$ the continuous
relative completions of
$\pi_1(\M_{g,[n]/\bar k},\bar\xi)$ and
$\pi_1(\cC_{g,[n]/\bar k},\bar x)$ with respect to
$\rho_{\Q_\ell,[\bar\xi]}$ and $\rho_{\cC_{\Q_\ell},[\bar\xi]}$, respectively.
Let $\U_{g,[n],\bar\xi}$ and $\U_{\cC_{g,[n]},\bar\xi}$ be their unipotent
radicals, and write
\[
\g_{g,[n],\bar\xi},\ \g_{\cC_{g,[n]},\bar\xi},\ 
\u_{g,[n],\bar\xi},\ \u_{\cC_{g,[n]},\bar\xi}
\]
for the Lie algebras of
$\cG_{g,[n],\bar\xi}$, $\cG_{\cC_{g,[n]},\bar\xi}$,
$\U_{g,[n],\bar\xi}$, and $\U_{\cC_{g,[n]},\bar\xi}$, respectively.

\textcolor{black}{Unlike the ordered case, we do not know whether the continuous relative completions
in characteristic $0$ and characteristic $p$ admit a direct comparison in the unordered setting.}
Nevertheless, the following result provides the structural input needed for our later arguments and may be viewed as the characteristic $p$ analogue of \cite[Lem.~5.22]{LW25}.

\begin{lemma}\label{lem:unordered-h1-Sp-invariants}
For $g\geq 2$ and $n\geq 1$, there is an isomorphism
$$
\Hom_{\Sp}^\cts(H_1(\u_{g,[n],\bar\xi}), H_{\Q_\ell}) \cong \Q_\ell.
$$
\end{lemma}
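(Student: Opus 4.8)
The plan is to convert the $\Sp$-multiplicity on the left-hand side into continuous group cohomology, to descend from the unordered to the ordered moduli by taking $S_n$-invariants, and then to import the ordered computation from characteristic~$0$ via Proposition~\ref{prop:basechange-rel-completions}. The first step records the standard cohomological description of the abelianized unipotent radical of a continuous relative completion. For the extension $1 \to \U_{g,[n],\bar\xi} \to \cG_{g,[n],\bar\xi} \to \Sp(H_{\Q_\ell}) \to 1$, the Hochschild--Serre spectral sequence together with the vanishing of the rational cohomology of the reductive group $\Sp(H_{\Q_\ell})$ in positive degrees gives, for any finite-dimensional $\Sp$-representation $V$,
\[
H^1(\cG_{g,[n],\bar\xi}, V) \;\cong\; \Hom^{\cts}_{\Sp}\!\bigl(H_1(\u_{g,[n],\bar\xi}), V\bigr).
\]
Since the completion map $\pi_1(\M_{g,[n]/\bar k}) \to \cG_{g,[n],\bar\xi}(\Q_\ell)$ induces an isomorphism on $H^1$ with coefficients in $\Sp$-representations, taking $V = H_{\Q_\ell}$ identifies the left-hand side of the lemma with $H^1_\cts(\pi_1(\M_{g,[n]/\bar k}), H_{\Q_\ell})$. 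It therefore suffices to show that this cohomology group is one-dimensional.

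Next I would use the finite étale $S_n$-cover to pass from the ordered to the unordered moduli. From the extension
\[
1 \longrightarrow \pi_1(\M_{g,n/\bar k}) \longrightarrow \pi_1(\M_{g,[n]/\bar k}) \longrightarrow S_n \longrightarrow 1,
\]
the Hochschild--Serre spectral sequence in continuous cohomology with $\Q_\ell$-coefficients collapses, because $|S_n|$ is invertible in $\Q_\ell$, to a natural isomorphism
\[
H^1_\cts\!\bigl(\pi_1(\M_{g,[n]/\bar k}), H_{\Q_\ell}\bigr)
\;\cong\;
H^1_\cts\!\bigl(\pi_1(\M_{g,n/\bar k}), H_{\Q_\ell}\bigr)^{S_n},
\]
the $S_n$-action being the outer one that permutes the marked points. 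This reduces the problem to the ordered case, retaining its $S_n$-module structure.

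For the ordered case I would transport the computation to characteristic~$0$. By Proposition~\ref{prop:basechange-rel-completions} the completion $\cG_{g,n,\bar\xi}$ is canonically isomorphic to the characteristic-$0$ completion $\cG_{g,n}$, and by naturality of the underlying specialization and base-change maps this isomorphism is $S_n$-equivariant; combined with the cohomological description of the first paragraph (applied to both $\pi_1(\M_{g,n/\bar k})$ and $\pi_1(\M_{g,n/\bar\Q})$), it yields an isomorphism of $S_n$-representations
\[
H^1_\cts\!\bigl(\pi_1(\M_{g,n/\bar k}), H_{\Q_\ell}\bigr)
\;\cong\;
H^1_\cts\!\bigl(\pi_1(\M_{g,n/\bar\Q}), H_{\Q_\ell}\bigr).
\]
In characteristic~$0$ the right-hand side is computed through the forgetful fibration $\M_{g,n/\bar\Q} \to \M_{g/\bar\Q}$: using the vanishing $H^1_\cts(\pi_1(\M_{g/\bar\Q}), H_{\Q_\ell})=0$ for $g\ge 3$ and the identification $H^1(F(C,n),\Q_\ell) \cong H_{\Q_\ell}^{\oplus n}$ (the $\Q_\ell$-dual of Proposition~\ref{prop:H1-pgn}), the Leray edge sequence exhibits it as the permutation module $\Q_\ell^{\oplus n}$ indexed by the marked points, which is precisely the input underlying \cite[Lem.~5.22]{LW25}. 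Taking $S_n$-invariants then gives $(\Q_\ell^{\oplus n})^{S_n}\cong \Q_\ell$, completing the argument.

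The principal difficulty lies not in the numerics but in controlling the $S_n$-action across all of these identifications. One must verify that the comparison isomorphism of Proposition~\ref{prop:basechange-rel-completions} is genuinely $S_n$-equivariant, so that the $S_n$-module appearing in characteristic~$p$ is faithfully matched with the permutation module in characteristic~$0$; and one must confirm that the Leray transgression
\[
d_2\colon \bigl(H^1(F(C,n),\Q_\ell)\otimes H_{\Q_\ell}\bigr)^{\Sp}
\longrightarrow H^2_\cts\!\bigl(\pi_1(\M_{g/\bar\Q}), H_{\Q_\ell}\bigr)
\]
vanishes, so that each marked point contributes exactly one class and the resulting $S_n$-representation is the full permutation module rather than a proper submodule. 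A further point demanding care is the validity of the cohomological comparison $H^1(\cG, V)\cong H^1_\cts(\G, V)$ for continuous relative completions in the profinite, positive-characteristic setting, which rests on the structural properties recalled in \S\ref{cont rel comp}.
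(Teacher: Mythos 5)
Your proposal is correct and takes essentially the same route as the paper: the cohomological description of relative completion, Hochschild--Serre for the finite $S_n$-cover, the comparison isomorphism of Proposition~\ref{prop:basechange-rel-completions} to transfer the ordered case to characteristic $0$, the identification of the ordered $H^1$ with the permutation module $\bigoplus_{j=1}^n\Q_\ell$ (which the paper imports from \cite[\S5.6--\S5.7]{LW25} via the characteristic classes of the tautological sections rather than re-deriving it through the Leray sequence), and finally $S_n$-invariants. The only caveat is that your Leray argument invokes the vanishing $H^1_{\cts}\bigl(\pi_1(\M_{g/\bar\Q}),H_{\Q_\ell}\bigr)=0$ only for $g\ge 3$, whereas the lemma is stated for $g\ge 2$; the paper sidesteps this range issue by citing the characteristic-class computation directly.
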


\begin{proof}
By the cohomological description of relative completion
\cite[Thm.~3.8]{hain_relwtfilt}, there is a natural isomorphism
\[
\Hom_{\Sp}^{\cts}\!\bigl(H_1(\u_{g,[n], \bar\xi}), H_{\Q_\ell}\bigr)
\;\cong\;
H^1_{\cts}\!\bigl(\pi_1(\M_{g,[n]/\bar k}), H_{\Q_\ell}\bigr).
\]
Since the monodromy representation
$\pi_1(\M_{g,n/\bar k})\to\Sp(H_{\Q_\ell})$ is Zariski dense, we have
$H_{\Q_\ell}^{\pi_1(\M_{g,n/\bar k})}=0$. \textcolor{black}{Applying the Hochschild--Serre
five-term exact sequence} associated with
$1\to\pi_1(\M_{g,n/\bar k})\to\pi_1(\M_{g,[n]/\bar k})\to S_n\to1$
\textcolor{black}{therefore yields}
\[
H^1_{\cts}\!\bigl(\pi_1(\M_{g,[n]/\bar k}), H_{\Q_\ell}\bigr)
\cong
H^1_{\cts}\!\bigl(\pi_1(\M_{g,n/\bar k}), H_{\Q_\ell}\bigr)^{S_n}.
\]

Again using~\cite[Thm.~3.8]{hain_relwtfilt}, we have natural isomorphisms
\begin{align*}
H^1_{\cts}\!\bigl(\pi_1(\M_{g,n/\bar k}), H_{\Q_\ell}\bigr)
&\cong \Hom_{\Sp}^{\cts}\!\bigl(H_1(\u_{g,n,\bar\xi}), H_{\Q_\ell}\bigr) \\
&\cong \Hom_{\Sp}^{\cts}\!\bigl(H_1(\u_{g,n,\bar\eta}), H_{\Q_\ell,\bar\eta}\bigr) \\
&\cong H^1_{\cts}\!\bigl(\pi_1(\M_{g,n/\bar K},\bar\eta), H_{\Q_\ell,\bar\eta}\bigr) \\
&\cong \bigoplus_{j=1}^n \Q_\ell ,
\end{align*}
where the second isomorphism is induced by
Proposition~\ref{prop:basechange-rel-completions}, and the final isomorphism is
induced by the characteristic classes associated with the tautological sections
of the universal curve (cf.~\cite[\S5.6--\S5.7]{LW25}).

\textcolor{black}{The symmetric group $S_n$ acts by permuting the summands of}
$\bigoplus_{j=1}^n \Q_\ell$. \textcolor{black}{Taking invariants therefore yields}
\[
H^1_{\cts}\!\bigl(\pi_1(\M_{g,[n]/\bar k}), H_{\Q_\ell}\bigr)
\;\cong\;
\Q_\ell,
\]
which proves the result.
\end{proof}

\subsection{Key exact sequences in relative completion}
We now introduce the exact sequences in continuous relative completion that play
a key role in the proof of the main results.

Denote by $\p^{\an}_{g,n}$ the Lie algebra of the $\ell$-adic unipotent completion
of $P_{g,n}$. In the case of $n=1$, denote $\p^\an_{g,1}$ by $\p^\an_{g}$. 
\subsubsection{Key exact sequences for the universal curve}
Since $F^\un_g$ has trivial center, applying continuous relative completion to the diagram of
Proposition~\ref{prop:ordered-unordered-homotopy-rel-prol}, we obtain a
commutative diagram of pro-algebraic $\Q_\ell$-groups with exact rows
(see \cite[Prop.~7.6]{wat_rational_points_inp})
\begin{equation} \label{eq:ordered-unordered-extensions}
\vcenter{\xymatrix@C=15pt@R=15pt{
1\ar[r]&F^\un_{g}\ar[r]\ar@{=}[d]&\cG_{\cC_{g,n}, \bar\xi}\ar[r]\ar[d]&\cG_{g,n,\bar\xi}\ar[r]\ar[d]&1\\
1\ar[r]&F^\un_{g}\ar[r]&\cG_{\cC_{g,[n]}, \bar\xi}\ar[r]&\cG_{g,[n],\bar\xi}\ar[r]&1
}}
\end{equation}

Since all morphisms in~\eqref{eq:ordered-unordered-extensions} land in the
unipotent radicals, this diagram restricts to a commutative diagram of
prounipotent $\Q_\ell$-groups with exact rows:
\[
\xymatrix@C=10pt@R=10pt{
1 \ar[r] & F^\un_{g} \ar[r]\ar@{=}[d] &
\U_{\cC_{g,n}, \bar\xi} \ar[r]\ar[d] &
\U_{g,n,\bar\xi} \ar[r]\ar[d] & 1 \\
1 \ar[r] & F^\un_{g} \ar[r] &
\U_{\cC_{g,[n]}, \bar\xi} \ar[r] &
\U_{g,[n],\bar\xi} \ar[r] & 1 .
}
\]

Passing to Lie algebras, we obtain a commutative diagram of pronilpotent Lie
algebras with exact rows:
\[
\xymatrix@C=10pt@R=10pt{
0\ar[r]&\p_g\ar[r]\ar@{=}[d]&\u_{\cC_{g,n}, \bar\xi}\ar[r]\ar[d]&\u_{g,n,\bar\xi}\ar[r]\ar[d]&0\\
0\ar[r]&\p_g\ar[r]&\u_{\cC_{g,[n]}, \bar\xi}\ar[r]&\u_{g,[n],\bar\xi}\ar[r]&0.
}
\]
Applying the functor $H_1$ to the Lie algebra diagram above, we obtain the
following comparison between the ordered and unordered cases.
\begin{lemma} \label{lem:homology-fiber-comparison}
For $g\geq2$, $n\geq 1$, and $\ell\neq p$ an odd prime, there is a commutative diagram
of $\Sp(H_\Ql)$-representations with exact rows:
\[
\xymatrix@C=15pt@R=15pt{
0 \ar[r] & H_1(\p_{g}) \ar[r]\ar@{=}[d] &
H_1(\u_{\cC_{g,n},\bar\xi}) \ar[r]\ar[d] &
H_1(\u_{g,n,\bar\xi}) \ar[r]\ar[d] & 0 \\
0 \ar[r] & H_1(\p_{g}) \ar[r] &
H_1(\u_{\cC_{g,[n]},\bar\xi}) \ar[r] &
H_1(\u_{g,[n],\bar\xi}) \ar[r] & 0.
}
\]
\end{lemma}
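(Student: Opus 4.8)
The plan is to obtain the asserted diagram by applying the abelianization functor $H_1$ to the commutative diagram of pronilpotent Lie algebras with exact rows displayed just above. Since $H_1$ is right exact on Lie algebras, its functoriality immediately produces the two right-exact rows together with the commuting squares and the equality of the two left-hand copies of $H_1(\p_g)$. The whole content of the lemma is therefore the \emph{left}-exactness of each row, that is, the injectivity of $H_1(\p_g)\to H_1(\u_{\cC_{g,n},\bar\xi})$ and of $H_1(\p_g)\to H_1(\u_{\cC_{g,[n]},\bar\xi})$.

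For the ordered row I would use the five-term exact sequence in Lie algebra homology attached to $0\to\p_g\to\u_{\cC_{g,n},\bar\xi}\to\u_{g,n,\bar\xi}\to0$,
\[
H_2(\u_{\cC_{g,n},\bar\xi})\to H_2(\u_{g,n,\bar\xi})\xrightarrow{\ d\ }\bigl(H_1(\p_g)\bigr)_{\u_{g,n,\bar\xi}}\to H_1(\u_{\cC_{g,n},\bar\xi})\to H_1(\u_{g,n,\bar\xi})\to0,
\]
and argue that $\bigl(H_1(\p_g)\bigr)_{\u_{g,n,\bar\xi}}\to H_1(\u_{\cC_{g,n},\bar\xi})$ is injective. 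Two observations suffice. First, $\u_{g,n,\bar\xi}$ acts trivially on $H_1(\p_g)\cong H_{\Q_\ell}$: this action is the one induced by the adjoint action of $\cG_{\cC_{g,n},\bar\xi}$ on $F^\un_g$, which is exactly the monodromy of the local system and hence factors through $\Sp(H_{\Q_\ell})$; as $\u_{g,n,\bar\xi}$ lies in the kernel of $\cG_{g,n,\bar\xi}\to\Sp(H_{\Q_\ell})$, the coinvariants equal $H_1(\p_g)$. Second, the transgression $d$ vanishes by weights: $H_1(\p_g)\cong H_{\Q_\ell}$ is pure of weight $-1$, while $\u_{g,n,\bar\xi}$ is negatively weighted ($\u_{g,n,\bar\xi}=W_{-1}\u_{g,n,\bar\xi}$), so $H_2(\u_{g,n,\bar\xi})$, being a subquotient of $\Lambda^2\u_{g,n,\bar\xi}$, is concentrated in weights $\le-2$; since $d$ is a morphism of weight-filtered $\Sp(H_{\Q_\ell})$-representations, strictness forces $d=0$. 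The requisite weight filtration is the Hodge-theoretic one transported to characteristic $p$ through the isomorphisms $\cG_{g,n,\bar\xi}\cong\cG_{g,n}$ and $\cG_{\cC_{g,n},\bar\xi}\cong\cG_{\cC_{g,n}}$ of Proposition~\ref{prop:basechange-rel-completions} (cf.\ \cite[Thm.~13.1]{hain_hodge_rel}). This establishes exactness of the top row.

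For the bottom row no direct comparison with characteristic $0$ is available, so instead of repeating the weight argument I would descend the ordered row along the exact functor of $S_n$-coinvariants. Because $|S_n|$ is invertible in $\Q_\ell$, the functor $(-)_{S_n}$ is exact. Using Hain's cohomological description of relative completion \cite[Thm.~3.8]{hain_relwtfilt} together with the Hochschild--Serre sequence for $1\to\pi_1(\M_{g,n/\bar k})\to\pi_1(\M_{g,[n]/\bar k})\to S_n\to1$ and its analogue for the universal curves, exactly as in Lemma~\ref{lem:unordered-h1-Sp-invariants}, I would upgrade the natural surjections to isomorphisms
\[
H_1(\u_{g,n,\bar\xi})_{S_n}\xrightarrow{\ \sim\ }H_1(\u_{g,[n],\bar\xi}),\qquad H_1(\u_{\cC_{g,n},\bar\xi})_{S_n}\xrightarrow{\ \sim\ }H_1(\u_{\cC_{g,[n]},\bar\xi}),
\]
noting that $S_n$ acts trivially on $H_1(\p_g)$ so that $H_1(\p_g)_{S_n}=H_1(\p_g)$. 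Applying $(-)_{S_n}$ to the exact ordered row then yields the exact unordered row, compatibly with all vertical maps; this is also where the odd-prime hypothesis on $\ell$ enters, through the surjectivity used to form the unordered extension (as in Proposition~\ref{prop:ordered-unordered-rel-prol}).

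The step I expect to be the main obstacle is the vanishing of the transgression $d$, and more precisely the verification that every arrow in the five-term sequence is a genuine morphism of weight-filtered $\Sp(H_{\Q_\ell})$-representations, so that strictness may be invoked; this is exactly the point at which the transfer of the Hodge-theoretic weight filtration from characteristic $0$, available only in the ordered setting, is indispensable. The secondary difficulty is the identification of the unordered homologies with the $S_n$-coinvariants of the ordered ones, which requires both the surjectivity of the comparison maps and a Yoneda-type argument in the semisimple category of $\Sp(H_{\Q_\ell})$-representations.
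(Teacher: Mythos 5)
Your treatment of the top row is correct, and is essentially a self-contained reproof (five-term sequence, trivial $\u_{g,n,\bar\xi}$-coinvariants, vanishing of the transgression by strictness of weights) of the characteristic-zero input that the paper simply imports: the paper establishes top-row exactness by combining Proposition~\ref{prop:basechange-rel-completions} with \cite[Cor.~5.16]{LW25}. The real divergence is in the bottom row, and it is here that the paper does something quite different from what you propose. It never identifies $H_1(\u_{\cC_{g,[n]},\bar\xi})$ with $S_n$-coinvariants; instead it notes that the adjoint action of $\u_{\cC_{g,n},\bar\xi}$ on $\p_g$ factors through the surjection onto $\u_{\cC_{g,[n]},\bar\xi}$ and, by the ordered-case comparison with characteristic zero, lands in $W_{-1}\Der(\p_g)$. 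Since $\Gr^W_{-1}\Der(\p_g)$ is abelian, this yields a factorization
\[
H_1(\p_g)\longrightarrow H_1(\u_{\cC_{g,n},\bar\xi})\longrightarrow
H_1(\u_{\cC_{g,[n]},\bar\xi})\longrightarrow \Gr^W_{-1}\Der(\p_g),
\]
whose composite is injective because $\p_g$ is center-free and the comparison $\p_g\cong\p_g^{\an}$ is strictly compatible with weights. Injectivity of $H_1(\p_g)\to H_1(\u_{\cC_{g,[n]},\bar\xi})$ then follows with no knowledge whatsoever of $H_1(\u_{\cC_{g,[n]},\bar\xi})$ as an $\Sp(H_\Ql)$-module.

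Your route, by contrast, needs the isomorphisms $H_1(\u_{\cC_{g,n},\bar\xi})_{S_n}\cong H_1(\u_{\cC_{g,[n]},\bar\xi})$ and $H_1(\u_{g,n,\bar\xi})_{S_n}\cong H_1(\u_{g,[n],\bar\xi})$, and this is where there is a genuine gap. First, for these coinvariants to be defined at all you need an honest $S_n$-action on $H_1(\u_{\cC_{g,n},\bar\xi})$ commuting with $\Sp(H_\Ql)$. What exists naturally is only the conjugation action of $\pi_1(\cC_{g,[n]/\bar k})$, i.e.\ an outer $S_n$-action, and one cannot dismiss the ambiguity by saying that inner automorphisms act trivially on an abelianization: inner automorphisms of $\cG_{\cC_{g,n},\bar\xi}$ act on $H_1(\u_{\cC_{g,n},\bar\xi})$ through $\Sp(H_\Ql)$, which is nontrivial. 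One must untwist, setting $\psi_\sigma$ equal to the automorphism induced by conjugation by a lift $\tilde\sigma$ composed with the action of $\rho(\tilde\sigma)^{-1}\in\Sp(H_\Ql)$, and verify that this is well defined, multiplicative, and $\Sp(H_\Ql)$-equivariant; this step is entirely absent from your proposal. Second, ``exactly as in Lemma~\ref{lem:unordered-h1-Sp-invariants}'' overstates what that lemma provides: it computes the multiplicity of the single irreducible $H_\Ql$ in $H_1(\u_{g,[n],\bar\xi})$, says nothing about $\u_{\cC_{g,[n]},\bar\xi}$, and a one-representation multiplicity count does not yield a module isomorphism. To obtain the coinvariant identification you would need \cite[Thm.~3.8]{hain_relwtfilt} for \emph{all} irreducible coefficients, the Hochschild--Serre isomorphism $H^1(\pi_1(\cC_{g,[n]/\bar k}),V)\cong H^1(\pi_1(\cC_{g,n/\bar k}),V)^{S_n}$, naturality of Hain's isomorphism with respect to the untwisted action, and then your Yoneda argument in the semisimple category. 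All of this can be carried out, so your strategy is salvageable rather than wrong; but as written the bottom row rests on identifications that are neither proved nor correctly sourced --- identifications which the paper itself records only a posteriori, in Remark~\ref{Sn decomp for ugn}, as a \emph{consequence} of this lemma.
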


\begin{proof}
The exactness of the top row
\[
0 \longrightarrow H_1(\mathfrak{p}_g) \longrightarrow H_1(\mathfrak{u}_{\mathcal{C}_{g,n},\bar{\xi}}) \longrightarrow H_1(\mathfrak{u}_{g,n,\bar{\xi}}) \longrightarrow 0
\]
follows from the comparison with the characteristic $0$ case, using Proposition \ref{prop:basechange-rel-completions} and \cite[Cor.~5.16]{LW25}.

For \textcolor{black}{the bottom row,} the unordered case, right exactness of the functor $H_1$ implies exactness at the right; it remains only to show that the map $H_1(\mathfrak{p}_g) \to H_1(\mathfrak{u}_{\mathcal{C}_{g, [n]},\bar{\xi}})$ is injective. To this end, we consider the natural weight filtration $W_\bullet\mathfrak{p}_g$ arising from the weighted completion of $\pi_1(\mathcal{C}_{g,n/\mathbb{F}_p})$ (see \cite{wat_rational_points_inp}), a variant of relative completion developed by Hain and Matsumoto (see \cite{hain-matsumoto, hain_rational}).

A key point is that $H_1(\mathfrak{p}_g)$ is pure of weight $-1$. Consequently, the weight filtration on $\mathfrak{p}_g$ coincides with its lower central series. Since the Hodge weight filtration on $\mathfrak{p}^{\mathrm{an}}_g$ is also known to coincide with its lower central series, the comparison isomorphism $\mathfrak{p}^\an_g \cong \mathfrak{p}_g$, which is induced by the isomorphism $\tilde{\lambda}: \pi_1(C_{\bar{\eta}})^{\mathrm{pro}\,\ell} \xrightarrow{\sim} \pi_1(C_{\bar{\xi}})^{\mathrm{pro}\,\ell}$ from Proposition \ref{prop:specialization-rel-prol}, is strictly compatible with weights. This implies that the induced isomorphism on derivation algebras is an isomorphism of \textit{filtered} Lie algebras:
\[
(\mathrm{Der}(\mathfrak{p}_g), W_\bullet) \cong (\mathrm{Der}(\mathfrak{p}^{\mathrm{an}}_g), W_\bullet).
\]

Consider the commutative diagram induced by $\tilde{\lambda}$:
\[
\xymatrix@C=15pt@R=15pt{
\mathfrak{p}_g \ar[r] & \mathrm{Der}(\mathfrak{p}_g) \\
\mathfrak{p}^{\mathrm{an}}_g \ar[r] \ar[u]^\cong & \mathrm{Der}(\mathfrak{p}^{\mathrm{an}}_g) \ar[u]^\cong,
}
\]
which yields the induced diagram on the weight $-1$ graded pieces:
\[
\xymatrix@C=15pt@R=15pt{
H_1(\mathfrak{p}_g) \ar[r] & \mathrm{Gr}^W_{-1}\mathrm{Der}(\mathfrak{p}_g) \\
H_1(\mathfrak{p}^{\mathrm{an}}_g) \ar[r] \ar[u]^\cong & \mathrm{Gr}^W_{-1}\mathrm{Der}(\mathfrak{p}^{\mathrm{an}}_g) \ar[u]^\cong.
}
\]
Since the functor $\mathrm{Gr}^W$ is exact on the category of MHS, the map $\mathrm{Gr}^W_{-1}\mathfrak{p}^{\mathrm{an}}_g = H_1(\mathfrak{p}^{\mathrm{an}}_g) \to \mathrm{Gr}^W_{-1}\mathrm{Der}(\mathfrak{p}^{\mathrm{an}}_g)$ is injective. Consequently, the top map $H_1(\mathfrak{p}_g) \to \mathrm{Gr}^W_{-1}\mathrm{Der}(\mathfrak{p}_g)$ is also injective.

In the case over $\bar\Q$, the Lie algebra $\mathfrak{u}_{\mathcal{C}_{g,n}}$ is negatively weighted, i.e., $\mathfrak{u}_{\mathcal{C}_{g,n}} = W_{-1}\mathfrak{u}_{\mathcal{C}_{g,n}}$. Therefore, the conjugation action of $\mathfrak{u}_{\mathcal{C}_{g,n}}$ on $\mathfrak{p}^{\mathrm{an}}_g$ lands in $W_{-1}\mathrm{Der}(\mathfrak{p}^{\mathrm{an}}_g)$. The comparison isomorphism from Proposition \ref{prop:specialization-rel-prol} induces the commutative diagram:
\[
\xymatrix@C=15pt@R=15pt{
\mathfrak{p}_g \ar[r] & \mathfrak{u}_{\mathcal{C}_{g,n},\bar{\xi}}\ar[r]^-{\mathrm{ad}} & \mathrm{Der}(\mathfrak{p}_g) \\
\mathfrak{p}^{\mathrm{an}}_g \ar[r] \ar[u]^\cong & \mathfrak{u}_{\mathcal{C}_{g,n}}\ar[r]^-{\mathrm{ad}} \ar[u]^\cong & \mathrm{Der}(\mathfrak{p}^{\mathrm{an}}_g) \ar[u]^\cong.
}
\]
It follows that the image of $\mathfrak{u}_{\mathcal{C}_{g, n},\bar{\xi}}$ lies in $W_{-1}\mathrm{Der}(\mathfrak{p}_g)$. Furthermore, since the map $\mathfrak{u}_{\mathcal{C}_{g, n},\bar{\xi}} \to \mathfrak{u}_{\mathcal{C}_{g, [n]},\bar{\xi}}$ is surjective and the adjoint action factors through this quotient, we obtain a sequence of maps:
\[
\mathfrak{p}_g \longrightarrow \mathfrak{u}_{\mathcal{C}_{g, n},\bar{\xi}} \longrightarrow \mathfrak{u}_{\mathcal{C}_{g, [n]},\bar{\xi}} \longrightarrow W_{-1}\mathrm{Der}(\mathfrak{p}_g).
\]
Since the target space $\mathrm{Gr}^W_{-1}\mathrm{Der}(\mathfrak{p}_g)$ is abelian, the composition induces the sequence on homology:
\[
H_1(\mathfrak{p}_g) \longrightarrow H_1(\mathfrak{u}_{\mathcal{C}_{g, n},\bar{\xi}}) \longrightarrow H_1(\mathfrak{u}_{\mathcal{C}_{g, [n]},\bar{\xi}}) \longrightarrow \mathrm{Gr}^W_{-1}\mathrm{Der}(\mathfrak{p}_g).
\]
Because the composite map from $H_1(\mathfrak{p}_g)$ to $\mathrm{Gr}^W_{-1}\mathrm{Der}(\mathfrak{p}_g)$ was shown to be injective, the map $H_1(\mathfrak{p}_g) \to H_1(\mathfrak{u}_{\mathcal{C}_{g, [n]},\bar{\xi}})$ must also be injective.
\end{proof}

\subsubsection{Key exact sequences for the family $\mathcal M_{g,n}\to\mathcal M_g$}
Next, since $\p^{\an}_{g,n}$ has trivial center \cite[p.~201]{NTU} and
$\p^{\an}_{g,n}\cong \p_{g,n}$, the same holds for $\p_{g,n}$ and for
$F^{\un}_{g,n}$.

By right exactness and naturality of relative completion, applying continuous
relative completion to the diagram of
Proposition~\ref{prop:ordered-unordered-rel-prol} yields a commutative diagram
of pro-algebraic $\Q_\ell$-groups with exact rows
\[
\xymatrix@C=10pt@R=10pt{
1 \ar[r] & F^{\un}_{g,n} \ar[r]\ar[d] &
\cG_{g,n,\bar\xi} \ar[r]\ar[d] &
\cG_{g,\bar\xi} \ar[r]\ar@{=}[d] & 1 \\
& F^{\un}_{g,[n]} \ar[r] &
\cG_{g,[n],\bar\xi} \ar[r] &
\cG_{g,\bar\xi} \ar[r] & 1 .
}
\]

Since all morphisms land in the unipotent radicals, this restricts to a
commutative diagram of prounipotent $\Q_\ell$-groups with exact rows
\[
\xymatrix@C=10pt@R=10pt{
1 \ar[r] & F^{\un}_{g,n} \ar[r]\ar[d] &
\U_{g,n,\bar\xi} \ar[r]\ar[d] &
\U_{g,\bar\xi} \ar[r]\ar@{=}[d] & 1 \\
& F^{\un}_{g,[n]} \ar[r] &
\U_{g,[n],\bar\xi} \ar[r] &
\U_{g,\bar\xi} \ar[r] & 1 .
}
\]

Passing to Lie algebras, we obtain a commutative diagram of pronilpotent Lie
algebras with exact rows
\[
\xymatrix@C=10pt@R=10pt{
0 \ar[r] & \p_{g,n} \ar[r]\ar[d] &
\u_{g,n,\bar\xi} \ar[r]\ar[d] &
\u_{g,\bar\xi} \ar[r]\ar@{=}[d] & 0 \\
& \p_{g,[n]} \ar[r] &
\u_{g,[n],\bar\xi} \ar[r] &
\u_{g,\bar\xi} \ar[r] & 0 .
}
\]
Applying the functor $H_1$ to the preceding diagram of pronilpotent Lie algebras
yields the following commutative diagram of $\Sp(H_{\Q_\ell})$-representations.
\begin{lemma}\label{lem:ordered-unordered-comparison}
For $g\geq2$, $n\geq 1$, and $\ell\not =p$ an odd prime, there is a commutative
diagram of $\Sp(H_\Ql)$-representations with exact rows
\[
\xymatrix@C=10pt@R=10pt{
0 \ar[r] & H_1(\p_{g,n}) \ar[r]\ar[d] &
H_1(\u_{g,n,\bar\xi}) \ar[r]\ar[d] &
H_1(\u_{g,\bar\xi}) \ar[r]\ar@{=}[d] & 0 \\
0\ar[r]& H_1(\p_{g,[n]}) \ar[r] &
H_1(\u_{g,[n],\bar\xi}) \ar[r] &
H_1(\u_{g,\bar\xi}) \ar[r] & 0 .
}
\]
\end{lemma}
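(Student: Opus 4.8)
The plan is to deduce the whole diagram from its top row by passing to $S_n$-coinvariants, using crucially that $|S_n|$ is invertible in $\Q_\ell$. First I would record that the top row
\[
0 \longrightarrow H_1(\p_{g,n}) \longrightarrow H_1(\u_{g,n,\bar\xi}) \longrightarrow H_1(\u_{g,\bar\xi}) \longrightarrow 0
\]
is short exact. Right exactness is automatic, since $H_1$ is right exact on pronilpotent Lie algebras; the injectivity on the left comes from the characteristic-$0$ comparison, as in the ordered part of Lemma~\ref{lem:homology-fiber-comparison}: by Proposition~\ref{prop:basechange-rel-completions} the groups $\cG_{g,n,\bar\xi}$ and $\cG_{g,\bar\xi}$ are canonically isomorphic to their characteristic-$0$ counterparts, over which the corresponding sequence is exact (cf.~\cite{LW25}). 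All these identifications are $S_n$-equivariant, the $S_n$-action arising from permutation of the marked points and being compatible with specialization; hence the top row is a short exact sequence of $\Q_\ell[S_n]$-modules whose maps also commute with the $\Sp(H_{\Q_\ell})$-action.

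Next I would identify the bottom row as the $S_n$-coinvariants of the top row, term by term. For the fiber this is Proposition~\ref{prop:H1-pgn-quot}, which realizes $H_1(\p_{g,[n]}) \cong H_1(\p_{g,n})_{S_n}$ via the natural map; the base term $H_1(\u_{g,\bar\xi})$ carries the trivial $S_n$-action and so equals its own coinvariants. The essential identification is the middle one,
\[
H_1(\u_{g,[n],\bar\xi}) \;\cong\; \bigl(H_1(\u_{g,n,\bar\xi})\bigr)_{S_n}.
\]
To prove it I would test against an arbitrary $\Sp(H_{\Q_\ell})$-representation $V$, using the cohomological description of relative completion \cite[Thm.~3.8]{hain_relwtfilt} to write $\Hom_{\Sp}^{\cts}(H_1(\u_{g,\bullet,\bar\xi}),V) \cong H^1_{\cts}(\pi_1(\M_{g,\bullet/\bar k}),V)$. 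Applying the Hochschild--Serre five-term sequence to $1 \to \pi_1(\M_{g,n/\bar k}) \to \pi_1(\M_{g,[n]/\bar k}) \to S_n \to 1$ and using that $H^{\ge 1}(S_n,-)$ vanishes on $\Q_\ell$-vector spaces gives $H^1_{\cts}(\pi_1(\M_{g,[n]/\bar k}),V) \cong H^1_{\cts}(\pi_1(\M_{g,n/\bar k}),V)^{S_n}$. Since $S_n$ acts trivially on $V$, the invariants $\Hom_{\Sp}(H_1(\u_{g,n,\bar\xi}),V)^{S_n}$ coincide with $\Hom_{\Sp}(H_1(\u_{g,n,\bar\xi})_{S_n},V)$, and the claimed isomorphism follows by Yoneda in the category of $\Sp(H_{\Q_\ell})$-representations. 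This is exactly the mechanism already used in Lemma~\ref{lem:unordered-h1-Sp-invariants}; indeed taking $V=H_{\Q_\ell}$ recovers its conclusion, which is a reassuring consistency check.

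Finally, since $|S_n|$ is invertible in $\Q_\ell$, the functor $(-)_{S_n}$ is exact on $\Q_\ell[S_n]$-modules. Applying it to the short exact top row yields a short exact sequence, which under the three identifications above is precisely the bottom row; in particular the map $H_1(\p_{g,[n]}) \to H_1(\u_{g,[n],\bar\xi})$ is injective, as required. The main obstacle is the middle coinvariants identification: one must establish not merely the abstract isomorphism $H_1(\u_{g,[n],\bar\xi}) \cong H_1(\u_{g,n,\bar\xi})_{S_n}$, but also that it intertwines the natural ordered-to-unordered map with the canonical projection to coinvariants, so that the identified sequence really is the bottom row with its given arrows. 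This is where the naturality of the cohomological description and of the restriction map in Hochschild--Serre must be invoked carefully. I would expect this coinvariants route to be cleaner than adapting the weight-filtration argument of Lemma~\ref{lem:homology-fiber-comparison}, precisely because here the fibers $\p_{g,n}$ and $\p_{g,[n]}$ differ, so the left-hand column is a genuine surjection rather than an equality and the direct derivation-algebra comparison is less transparent.
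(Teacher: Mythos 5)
Your proposal is correct in substance, but it proves the key point (exactness of the bottom row on the left) by a genuinely different route from the paper. The paper keeps the argument minimal: right exactness of $H_1$ gives exactness at the right, and for injectivity of $j:H_1(\p_{g,[n]})\to H_1(\u_{g,[n],\bar\xi})$ it uses that $H_1(\p_{g,[n]})\cong H_{\Q_\ell}$ is an \emph{irreducible} $\Sp(H_{\Q_\ell})$-module, so $j$ is either zero or injective; the case $j=0$ is excluded because it would force $H_1(\u_{g,[n],\bar\xi})\cong H_1(\u_{g,\bar\xi})$, whence $\Hom^{\cts}_{\Sp}(H_1(\u_{g,[n],\bar\xi}),H_{\Q_\ell})=0$ by the characteristic-$0$ comparison and \cite[Thm.~9.11]{hain_rational}, contradicting Lemma~\ref{lem:unordered-h1-Sp-invariants}. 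You instead upgrade the mechanism behind Lemma~\ref{lem:unordered-h1-Sp-invariants} (Hochschild--Serre plus vanishing of $S_n$-cohomology in characteristic $0$) from the single coefficient module $V=H_{\Q_\ell}$ to arbitrary $\Sp$-representations $V$, and conclude by Yoneda that $H_1(\u_{g,[n],\bar\xi})\cong H_1(\u_{g,n,\bar\xi})_{S_n}$ compatibly with the natural maps, after which exactness of $(-)_{S_n}$ finishes the proof. Your route buys more: it establishes directly the $S_n$-equivariant decomposition that the paper only records afterwards in Remark~\ref{Sn decomp for ugn} (there deduced from the lemma via semisimplicity), and it does not use irreducibility of $H_{\Q_\ell}$ at all. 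The cost is exactly the subtlety you flag: the $S_n$-action furnished by Hochschild--Serre on $H^1_{\cts}(\pi_1(\M_{g,n/\bar k}),V)$ comes from conjugation by lifts $\tilde\sigma\in\pi_1(\M_{g,[n]/\bar k})$, which acts on $H_1(\u_{g,n,\bar\xi})$ only \emph{twisted}-equivariantly (it covers conjugation by $\rho(\tilde\sigma)$ on $\Sp(H_{\Q_\ell})$); one must untwist by composing with $\rho(\tilde\sigma)^{-1}$ (acting through the $\Sp$-module structure on $H_1(\u_{g,n,\bar\xi})$) to get a genuine, lift-independent action by $\Sp$-equivariant automorphisms, and then check that inflation--restriction corresponds to precomposition with the natural map $H_1(\u_{g,n,\bar\xi})\to H_1(\u_{g,[n],\bar\xi})$ under \cite[Thm.~3.8]{hain_relwtfilt}. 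These verifications do go through, so your argument is sound, but they are precisely the bookkeeping the paper's dichotomy argument is designed to avoid.
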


\begin{proof}
Consider the extension 
\[
1 \longrightarrow \mathcal{U}_{g,n, \bar{\xi}} \longrightarrow \mathcal{G}_{g,n,\bar{\xi}} \longrightarrow \mathrm{Sp}(H_{\mathbb{Q}_\ell}) \longrightarrow 1.
\]
The group $\mathcal{G}_{g,n,\bar{\xi}}$ acts by conjugation on the ordered fiber $F^\mathrm{un}_{g,n}$ and the unipotent radical $\mathcal{U}_{g,n,\bar{\xi}}$. These actions induce $\mathcal{G}_{g,n,\bar{\xi}}$-equivariant maps on the homology of the corresponding Lie algebras:
\[
H_1(\mathfrak{p}_{g,n}) \longrightarrow H_1(\mathfrak{u}_{g,n,\bar{\xi}}) \quad \text{and} \quad H_1(\mathfrak{u}_{g,n,\bar{\xi}}) \longrightarrow H_1(\mathfrak{u}_{g, \bar{\xi}}).
\]
Since the action of $\mathcal{G}_{g,n,\bar{\xi}}$ on these abelianizations factors through $\mathrm{Sp}(H_{\mathbb{Q}_\ell})$, these maps are $\mathrm{Sp}(H_{\mathbb{Q}_\ell})$-equivariant.

For the unordered case, similarly, the conjugation action of $\mathcal{G}_{g,[n],\bar{\xi}}$ on the unipotent radicals $\mathcal{U}_{g,[n],\bar{\xi}}$ and $\mathcal{U}_{g,\bar{\xi}}$ induces a map on their Lie algebra homology:
\[
H_1(\mathfrak{u}_{g,[n],\bar{\xi}}) \longrightarrow H_1(\mathfrak{u}_{g,\bar{\xi}}).
\]
This map is $\mathcal{G}_{g,[n],\bar{\xi}}$-equivariant. Since the action of $\mathcal{G}_{g,[n],\bar{\xi}}$ on the abelianization of its unipotent radical factors through $\mathrm{Sp}(H_{\mathbb{Q}_\ell})$, this induced map is $\mathrm{Sp}(H_{\mathbb{Q}_\ell})$-equivariant. Next, recall from Proposition \ref{prop:H1-pgn-quot} that we have the isomorphism
\[
H_1(F^\mathrm{un}_{g, [n]}) \cong \left( H_1(F^\mathrm{un}_{g,n}) \right)_{S_n}.
\]
The conjugation action of $\mathcal{G}_{g,n, \bar{\xi}}$ on $H_1(F^\mathrm{un}_{g,n}) \cong H_{\mathbb{Q}_\ell}^{\oplus n}$ commutes with the permutation of the marked points (acting diagonally on the summands), so it descends to a well-defined action on $H_1(\mathfrak{p}_{g, [n]}) \cong H_1(F^\mathrm{un}_{g,[n]})$. Consequently, the natural morphism
\[
H_1(\mathfrak{p}_{g, [n]}) \longrightarrow H_1(\mathfrak{u}_{g, [n], \bar{\xi}})
\]
is $\mathcal{G}_{g,n,\bar{\xi}}$-equivariant. Since the induced action on both the source (the coinvariants) and the target factors through the symplectic representation, this morphism is $\mathrm{Sp}(H_{\mathbb{Q}_\ell})$-equivariant.

For the exactness of the top row, applying the continuous relative completion functor to the exact sequence of groups in Proposition \ref{prop:ordered-specialization-rel-prol} yields the commutative diagram:
\[
\xymatrix@C=15pt@R=15pt{
& H_1(\mathfrak{p}_{g,n}) \ar[r] & H_1(\mathfrak{u}_{g,n,\bar{\xi}}) \ar[r] & H_1(\mathfrak{u}_{g,\bar{\xi}}) \ar[r] & 0 \\
0 \ar[r] & H_1(\mathfrak{p}^{\mathrm{an}}_{g,n}) \ar[r]\ar[u]_\cong & H_1(\mathfrak{u}_{g,n}) \ar[r]\ar[u]_\cong & H_1(\mathfrak{u}_g) \ar[r]\ar[u]_\cong & 0.
}
\]
Here, the vertical isomorphisms are induced by the comparison isomorphisms provided by Proposition \ref{prop:ordered-specialization-rel-prol} and Proposition \ref{prop:basechange-rel-completions}. A sketch of the argument for the exactness of the bottom row at the left is as follows. The Lie algebras $\mathfrak{p}^{\mathrm{an}}_{g,n}$, $\mathfrak{u}_{g,n}$, and $\mathfrak{u}_g$ carry natural weight filtrations $W$ induced by their underlying Mixed Hodge Structures \cite[Sec.~3]{hain_infpretor}. Since the first homology $H_1$ is pure of weight $-1$, these weight filtrations coincide with the lower central series \cite[Lem.~4.7]{hain_infpretor}. The map $\mathfrak{p}^{\mathrm{an}}_{g,n} \to \mathfrak{u}_{g,n}$ is injective because $\mathfrak{p}^{\mathrm{an}}_{g,n}$ is center-free. Since the functor $\mathrm{Gr}^W_\bullet$ is exact on the category of MHS, this injectivity is preserved on the graded pieces. In particular, the induced map on the weight $-1$ component,
\[
\mathrm{Gr}^W_{-1}\mathfrak{p}^{\mathrm{an}}_{g,n} = H_1(\mathfrak{p}^{\mathrm{an}}_{g,n}) \longrightarrow H_1(\mathfrak{u}_{g,n}) = \mathrm{Gr}^W_{-1}\mathfrak{u}_{g,n},
\]
is injective.

Since the bottom row is exact (as established above) and the vertical maps are isomorphisms, the top row is exact. Therefore, the map $H_1(\mathfrak{p}_{g,n})\to H_1(\mathfrak{u}_{g,n,\bar{\xi}})$ is also injective.

Applying the functor $H_1$ to the exact sequence
\[
\mathfrak{p}_{g, [n]} \longrightarrow \mathfrak{u}_{g, [n],\bar{\xi}} \longrightarrow \mathfrak{u}_{g, \bar{\xi}} \longrightarrow 0,
\]
we obtain the exact sequence of $\mathrm{Sp}(H_{\mathbb{Q}_\ell})$-representations:
\[
H_1(\mathfrak{p}_{g, [n]}) \xrightarrow{j} H_1(\mathfrak{u}_{g, [n],\bar{\xi}}) \xrightarrow{\pi} H_1(\mathfrak{u}_{g,\bar{\xi}}) \longrightarrow 0,
\]
where we denote the projection by $\pi$ and the natural $\mathrm{Sp}(H_{\mathbb{Q}_\ell})$-equivariant map by $j$. By exactness, we have $\ker \pi = \mathrm{im}\, j$.

Recall from Proposition \ref{prop:H1-pgn-quot} that $H_1(\mathfrak{p}_{g,[n]}) \cong H_{\mathbb{Q}_\ell}$. Since the standard representation $H_{\mathbb{Q}_\ell}$ is irreducible, the map $j$ must be either the zero map or injective.

Suppose for the sake of contradiction that $j$ is the zero map. Then $\mathrm{im}\, j = 0$, which implies $\ker \pi = 0$. Consequently, $\pi$ would be an isomorphism. However, this implies that $\mathrm{Hom}_{\mathrm{Sp}}^\mathrm{cts}(H_1(\mathfrak{u}_{g,[n],\bar{\xi}}), H_{\mathbb{Q}_\ell}) = 0$ by Proposition \ref{prop:basechange-rel-completions} and \cite[Thm.~9.11]{hain_rational}. This contradicts Lemma \ref{lem:unordered-h1-Sp-invariants}. Thus, $j$ must be an injection. This completes the proof.
\end{proof}

\begin{remark}\label{Sn decomp for ugn}
Consider the exact sequence for the ordered case (the top row of Lemma \ref{lem:ordered-unordered-comparison}):
\[
0 \longrightarrow H_1(\mathfrak{p}_{g,n}) \longrightarrow H_1(\mathfrak{u}_{g,n,\bar{\xi}}) \longrightarrow H_1(\mathfrak{u}_{g,\bar{\xi}}) \longrightarrow 0.
\]
This is a short exact sequence of $\mathrm{Sp}(H_{\mathbb{Q}_\ell})$-representations. Since the symplectic group $\mathrm{Sp}(H_{\mathbb{Q}_\ell})$ is reductive, its category of finite-dimensional representations is semisimple. Consequently, every short exact sequence splits, providing an $\mathrm{Sp}(H_{\mathbb{Q}_\ell})$-equivariant isomorphism:
\[
H_1(\mathfrak{u}_{g,n,\bar{\xi}}) \cong H_1(\mathfrak{p}_{g,n}) \oplus H_1(\mathfrak{u}_{g,\bar{\xi}})\cong H_\Ql^{\oplus n}\oplus H_1(\u_{g,\bar\xi}).
\]
This splitting descends to the unordered case (the bottom row of Lemma \ref{lem:ordered-unordered-comparison}). The vertical maps in the diagram correspond to taking coinvariants under the action of the symmetric group $S_n$. Since the functor of taking coinvariants commutes with direct sums, and $S_n$ acts trivially on the term $H_1(\mathfrak{u}_{g,\bar{\xi}})$, we obtain:
\[
H_1(\mathfrak{u}_{g,[n],\bar{\xi}}) \cong \left( H_1(\mathfrak{p}_{g,n}) \right)_{S_n} \oplus \left( H_1(\mathfrak{u}_{g,\bar{\xi}}) \right)_{S_n} \cong H_1(\mathfrak{p}_{g,[n]}) \oplus H_1(\mathfrak{u}_{g,\bar{\xi}}) \cong H_\Ql\oplus H_1(\mathfrak{u}_{g,\bar{\xi}}).
\]
This provides an $\mathrm{Sp}(H_{\mathbb{Q}_\ell})$-decomposition for the unordered relative completion that is compatible with the $S_n$-action.
\end{remark}

\subsection{Proof of the main results}

Finally, we prove Theorem~\ref{thm:non-split-positive-char} and
Corollary~\ref{cor:no-section}.

\begin{proof}[Proof of Theorem~\ref{thm:non-split-positive-char}]
Suppose that the canonical projection
$\tilde{\pi}_{g, [n]} :
\mathcal{G}_{\mathcal{C}_{g, [n]}, \bar{\xi}}
\to
\mathcal{G}_{g, [n],\bar{\xi}}$
admits a section $s$.
\textcolor{black}{Since the diagram~\eqref{eq:ordered-unordered-extensions} is a
pullback diagram of pro-algebraic $\Q_\ell$-groups,}
the section $s$ induces a section $t$ of the projection
$\mathcal{G}_{\mathcal{C}_{g,n},\bar{\xi}}
\to
\mathcal{G}_{g,n,\bar{\xi}}$
\textcolor{black}{such that the following diagram commutes:}
\begin{equation*}
\vcenter{\xymatrix@C=15pt@R=15pt{
1\ar[r]&F^\un_{g}\ar[r]\ar@{=}[d]
&\cG_{\cC_{g,n}, \bar\xi}\ar[r]\ar[d]
&\cG_{g,n,\bar\xi}\ar[r]\ar[d]\ar@/^1pc/[l]_{t}&1\\
1\ar[r]&F^\un_{g}\ar[r]
&\cG_{\cC_{g,[n]}, \bar\xi}\ar[r]
&\cG_{g,[n],\bar\xi}\ar[r]\ar@/^1pc/[l]^{s}&1.
}}
\end{equation*}

\textcolor{black}{Passing to Lie algebras,} this yields a corresponding commutative
diagram of pronilpotent Lie algebras:
\[
\xymatrix@C=15pt@R=15pt{
0\ar[r]&\p_g\ar[r]\ar@{=}[d]
&\u_{\cC_{g,n}, \bar\xi}\ar[r]\ar[d]
&\u_{g,n,\bar\xi}\ar[r]\ar[d]\ar@/^1pc/[l]^{dt}&0\\
0\ar[r]&\p_g\ar[r]
&\u_{\cC_{g,[n]}, \bar\xi}\ar[r]
&\u_{g,[n],\bar\xi}\ar[r]\ar@/^1pc/[l]^{ds}&0.
}
\]

Applying Lemma~\ref{lem:homology-fiber-comparison},
Lemma~\ref{lem:ordered-unordered-comparison}, and
Remark~\ref{Sn decomp for ugn},
we obtain the commutative diagram
\begin{equation} \label{eq:abelianized-splitting}
\vcenter{\xymatrix@C=15pt@R=15pt{
0\ar[r]&H_\Ql\ar[r]\ar@{=}[d]
&H_\Ql\oplus H_1(\u_{g,\bar\xi})\oplus H_\Ql^{\oplus n}\ar[r]\ar[d]
&H_1(\u_{g,\bar\xi})\oplus H_\Ql^{\oplus n}\ar[r]\ar[d]\ar@/^1pc/[l]^{dt^\ab}&0\\
0\ar[r]&H_\Ql\ar[r]
&H_\Ql\oplus H_1(\u_{g,\bar\xi})\oplus H_\Ql\ar[r]
&H_1(\u_{g,\bar\xi})\oplus H_\Ql\ar[r]\ar@/^1pc/[l]^{ds^\ab}&0,
}}
\end{equation}
where $dt^{\mathrm{ab}}$ and $ds^{\mathrm{ab}}$
denote the abelianizations of $dt$ and $ds$, respectively.

By the comparison isomorphisms established in
Proposition~\ref{prop:basechange-rel-completions},
the section $t$ induces a section
$t_{\bar{\Q}}$ of
$\mathcal{G}_{\mathcal{C}_{g,n}} \to \mathcal{G}_{g,n}$,
and consequently an
$\Sp(H_{\Q_\ell})$-equivariant linear section
$dt^{\mathrm{ab}}_{\bar{\Q}}$ of
$H_1(\mathfrak{u}_{\mathcal{C}_{g,n}})
\to
H_1(\mathfrak{u}_{g,n})$.
According to~\cite[Prop.~6.4]{LW25}, for $g \ge 3$,
there are exactly $n$ such sections arising from a section of
$\mathcal{G}_{\mathcal{C}_{g,n}} \to \mathcal{G}_{g,n}$.
More precisely,
\textcolor{black}{after identifying via the comparison isomorphism,}
the restriction of $dt^{\mathrm{ab}}$ to the summand
$H_{\Q_\ell}^{\oplus n}$
maps into the component
$H_1(\mathfrak{p}_g) \cong H_{\Q_\ell}$
(corresponding to the fiber $C_{\bar{\xi}}$)
by the formula
\[
dt^{\mathrm{ab}} : (x_1,\ldots, x_n) \longmapsto x_i
\]
for some fixed $i \in \{1, \ldots, n\}$.

The remainder of the argument follows the proof of
\cite[Thm.~1]{LW25}.
It is shown there that such a map $dt^{\mathrm{ab}}$
(projection to a single coordinate)
is not invariant under the action of $S_n$ for $n \ge 2$.
This contradicts the commutativity of
diagram~\eqref{eq:abelianized-splitting},
which requires the section to descend to the
$S_n$-coinvariants.
Therefore, no such section $s$ can exist.
\end{proof}

\begin{proof}[Proof of Corollary~\ref{cor:no-section}]
Every continuous section of the projection
$\pi_1(\mathcal{C}_{g, [n]/k})
\to
\pi_1(\mathcal{M}_{g, [n]/k})$
induces a section of the geometric fundamental groups
$\pi_1(\mathcal{C}_{g, [n]/\bar{k}})
\to
\pi_1(\mathcal{M}_{g, [n]/\bar{k}})$.
By the universal property of continuous relative completion,
this in turn induces a section of the relative completions
$\mathcal{G}_{\mathcal{C}_{g,[n]},\bar{\xi}}
\to
\mathcal{G}_{g, [n], \bar{\xi}}$,
and consequently an
$\Sp(H_{\Q_\ell})$-equivariant linear section of
\[
H_1(\mathfrak{u}_{\mathcal{C}_{g, [n]}, \bar{\xi}})
\longrightarrow
H_1(\mathfrak{u}_{g, [n], \bar{\xi}}).
\]
By Theorem~\ref{thm:non-split-positive-char},
no such section exists.
Thus, the original projection admits no section.
\end{proof}
\section{Notation and Glossary}

Throughout this paper, \(k\) denotes a field of characteristic \(p>0\), and
\(\bar{k}\) its algebraic closure.  
Let \(R := W(\bar{k})\) be the ring of Witt vectors of \(\bar{k}\), and let
\(K\) denote the fraction field of \(R\).
We write \(\bar{\xi}\) for a geometric closed point of \(\Spec R\) and
\(\bar{\eta}\) for a geometric generic point of \(\Spec R\).

Let \(L\) be an algebraically closed field of characteristic \(p \ge 0\).
For a stack \(\chi\) over \(L\), we denote by \(\bar{\zeta}\) a geometric point
of \(\chi\).

All fundamental groups are \'etale fundamental groups, unless stated otherwise,
for example \(\pi_1^{\top}\) or \(\pi_1^{\orb}\).
All pro-algebraic groups and Lie algebras are defined over \(\Q_\ell\).

\subsection*{Moduli stacks and universal curves}

\begin{itemize}\setlength\itemsep{2pt}
\item $\M_{g,n/k}$: the moduli stack of smooth proper genus-$g$ curves with
      $n$ ordered distinct marked points over $k$.

\item $\M_{g,[n]/k} := [\M_{g,n/k}/S_n]$: the moduli stack of smooth proper
      genus-$g$ curves with $n$ \emph{unordered} marked points.

\item $\pi_{g,n/k} : \cC_{g,n/k} \to \M_{g,n/k}$: the universal curve in the
      ordered case.

\item $\pi_{g,[n]/k} : \cC_{g,[n]/k} \to \M_{g,[n]/k}$: the universal curve in the
      unordered case.
\end{itemize}

\subsection*{Configuration spaces and their fundamental groups}

Let $C$ be a smooth proper connected curve of genus $g$ over $\bar k$.

\begin{itemize}\setlength\itemsep{2pt}
\item $F(C,n) := C^n \setminus \Delta$: the ordered configuration space of $n$
      distinct points on $C$.

\item $F(C,[n]) := F(C,n)/S_n$: the unordered configuration space.

\item $F_{g,n} := \pi_1(F(C,n))$: the algebraic fundamental group of the ordered
      configuration space.

\item $F_{g,[n]} := \pi_1(F(C,[n]))$: the algebraic fundamental group of the
      unordered configuration space.
\end{itemize}

\subsection*{$\ell$-adic unipotent completions}

\begin{itemize}\setlength\itemsep{2pt}
\item $F_{g,n}^{\un}$, $F_{g,[n]}^{\un}$:
      the $\ell$-adic unipotent completions of
      $F_{g,n}$ and $F_{g,[n]}$, respectively.

\item $\p_{g,n} := \Lie(F_{g,n}^{\un})$, \quad
      $\p_{g,[n]} := \Lie(F_{g,[n]}^{\un})$:
      the associated pronilpotent Lie algebras.

\item $\p_g := \p_{g,1}$:
      the Lie algebra associated with the unipotent completion of
      $\pi_1(C)$.
\end{itemize}

\subsection*{Continuous relative completions}

Let
\[
\rho_{\Q_\ell} :
\pi_1(\M_{g,n/\bar k},\bar\xi)
\longrightarrow
\Sp(H_{\Q_\ell})
\]
denote the $\ell$-adic monodromy representation, where
\[
H_{\Q_\ell} := H^1_{\et}(C,\Q_\ell(1))
\]
is the standard symplectic $\Q_\ell$-vector space associated with a geometric
fiber of the universal curve.

The same construction applies verbatim over any algebraically closed base field
\[
L \in \{\bar k,\ \bar K,\ \bar{\Q}\},
\]
with respect to a chosen geometric point \(\bar\zeta\) of the corresponding
stack. Unless explicitly indicated otherwise, we use the same notation for the
continuous relative completions over each of these bases, specifying the
geometric point only when it is relevant (e.g.\ in specialization arguments).

\begin{itemize}\setlength\itemsep{2pt}
\item $\cG_{g,n,\bar\zeta}$, $\cG_{g,[n],\bar\zeta}$:
      the continuous relative completions of
      $\pi_1(\M_{g,n/L},\bar\zeta)$ and
      $\pi_1(\M_{g,[n]/L},\bar\zeta)$
      with respect to the induced $\ell$-adic monodromy representation
      $\rho_{\Q_\ell}$.

\item $\cG_{\cC_{g,n},\bar\zeta}$, $\cG_{\cC_{g,[n]},\bar\zeta}$:
      the continuous relative completions of the fundamental groups of the
      ordered and unordered universal curves over \(L\), obtained by composing
      $\rho_{\Q_\ell}$ with the natural homomorphisms induced by the universal
      curve.

\item $\U_{(\cdot)}$:
      the unipotent radical of a continuous relative completion.

\item $\u_{(\cdot)} := \Lie(\U_{(\cdot)})$:
      the associated pronilpotent Lie algebra.
\end{itemize}


\begin{remark}
In the characteristic $0$ setting of our prequel~\cite{LW25}, continuous
relative completions were denoted by $\cG^{\prol}$ in order to distinguish them
from relative completions over~$\Q$.  
In the present paper, all relative completions are continuous and defined over
$\Q_\ell$, and we therefore omit the superscript~$\prol$ to simplify notation.
\end{remark}

\bibliographystyle{alpha}   
\bibliography{references}

\end{document}